\def\step{
   \@ifnextchar[ \@step{\@noitemargtrue\@step[\@itemlabel]}}
\def\@step[#1]{\item[#1]\textit{}\hspace*{\dimexpr-\labelwidth-\labelsep}}
\newcommand{\C}{{\mathbb{C}}}
\newcommand{\R}{{\mathbb{R}}}
\renewcommand{\i}{\text{i}}
\newcommand{\e}{\mathrm{e}}
\newcommand{\set}[2]{\left\{#1:~#2\right\}}
\newcommand{\smallo}[1]{\mathrm{o}\left({#1}\right)}
\newcommand{\bigo}[1]{\mathrm{O}\left({#1}\right)}
\newcommand{\abs}[1]{\left|{#1}\right|}
\newcommand{\sts}[1]{\left({#1}\right)}
\newcommand{\ltl}[1]{\left\{{#1}\right\}}
\newcommand{\ntn}[1]{\left[{#1}\right]}
\newcommand{\dual}[2]{\left\langle{#1},\,{#2}\right\rangle}
\newcommand{\act}[2]{\left\langle{#1},\,{#2}\right\rangle}
\newcommand{\biact}[3]{\left\langle{#1}\,{#2},\,{#3}\right\rangle}
\newcommand{\bilin}[2]{\left({#1}, {#2}\right)}
\newcommand{\inprod}[2]{\left({#1},\,{#2}\right)}
\newcommand{\U}[4]{ {\mathcal{U}\left({#1}, \,{#2}, \,{#3}, \,{#4}\right)} }
\newcommand{\normhone}[1]{\left\|#1\right\|_{H^1(\R)}}
\newcommand{\norm}[1]{\|#1\|}
\renewcommand{\d}{\;\mathrm{d}}
\newcommand{\dx}{\;\mathrm{d}x}
\newcommand{\ess}{\mathrm{ess}}
\newcommand{\spn}{\mathrm{span}}
\newcommand{\ift}{\mathrm{IFT}}
\newcommand{\q}{\mathbf{q}}
\newcommand{\supp}[1]{\mathrm{supp}{#1}}
\renewcommand{\S}{\mathcal{S}}
\newcommand{\J}{\mathcal{J}}
\newcommand{\p}{\mathtt p}
\renewcommand{\geq}{\geqslant}
\renewcommand{\leq}{\leqslant}
\newcommand{\Rmnum}[1]{\expandafter\@slowromancap\romannumeral #1@}
\newcommand{\eps}{{\varepsilon}}
\theoremstyle{plain}
\newtheorem{theorem}{Theorem}
\newtheorem{proposition}[theorem]{Proposition}
\newtheorem{lemma}[theorem]{Lemma}
\newtheorem{corollary}[theorem]{Corollary}
\theoremstyle{definition}
\newtheorem{remark}[theorem]{Remark}
\numberwithin{equation}{section}
\numberwithin{theorem}{section}
\begin{document}

\onehalfspacing

\title[gDNLS]
{Stability of the sum of two solitary waves for (gDNLS) in the energy space}

\author[]{Xingdong Tang}
\address{\hskip-1.15em Xingdong Tang \hfill\newline Beijing Computational Science Research Center, \hfill\newline No. 10 West Dongbeiwang Road, Haidian
District, Beijing, China, 100193,}
\email{txd@csrc.ac.cn}

\author[]{Guixiang Xu}
\address{\hskip-1.15em Guixiang Xu \hfill\newline Institute of
Applied Physics and Computational Mathematics, \hfill\newline P. O.
Box 8009,\ Beijing,\ China,\ 100088.}
\email{xu\_guixiang@iapcm.ac.cn}

\subjclass[2010]{Primary: 35L70, Secondary: 35Q55}

\keywords{Generalized derivative Schr\"{o}dinger equation; Orbitally stability; Solitary waves}

\begin{abstract}
In this paper, we continue the study in \cite{MiaoTX:DNLS:Stab}. We use the perturbation argument, modulational analysis and the energy argument in \cite{MartelMT:Stab:gKdV, MartelMT:Stab:NLS} to show the stability of the sum of two
solitary waves with weak interactions for the generalized derivative Schr\"{o}dinger equation (gDNLS) in the energy space. Here (gDNLS) hasn't the Galilean transformation invariance, the pseudo-conformal invariance and the gauge transformation invariance, and the case $\sigma>1$ we considered corresponds to the $L^2$-supercritical case.
\end{abstract}

\maketitle


\section{Introduction}
In this paper, we consider the stability of the solitary waves for the generalized derivative Schr{\"o}dinger equation (gDNLS for short) in $H^1(\R)$
\begin{equation}\label{gDNLS}
\left\{
\begin{aligned}
& \i u_t+u_{xx}+\i |u|^{2\sigma}u_x=0,&\sts{t,x}\in\R\times\R,
\\
& u\sts{0,x}=u_0\sts{x}\in H^1\sts{\R},&
\end{aligned}
\right.
\end{equation}
where $u$ is a complex valued function of $\sts{t,x}\in\R\times\R$, $\sigma>0$.
With $\sigma=1$, \eqref{gDNLS} has appeared as a model for Alfv\'{e}n waves in plasma physics \cite{M-PHY,PassS-PHY,SuSu-book}.

The equation \eqref{gDNLS} is $\dot H^{\frac{\sigma-1}{2\sigma}}$ critical since the scaling
transformation
\begin{align*}u(t,x)\mapsto u_{\lambda}(t,x)=\lambda^{\frac{1}{2\sigma}}u(\lambda^2t, \lambda x)
\end{align*}
leaves both \eqref{gDNLS} and $\dot H^{\frac{\sigma-1}{2\sigma}}$-norm invariant. The mass, momentum
and energy of the solution $u(t,x)$ of \eqref{gDNLS} are defined as following
\begin{align*}
M(u)(t)= & \frac12 \int |u(t,x)|^2 \dx, \\
P(u)(t)=& -\frac{1}{2}\Im\int\sts{\bar{u}u_x}\sts{t,x}\dx ,\\
E(u)(t)=& \frac{1}{2}\int|u_x\sts{t,x}|^2\; \dx
  +\frac{1}{2(\sigma+1)}
  \Im\int\sts{|u|^{2\sigma}\bar{u}u_x}\sts{t,x} \dx.
\end{align*}
They are conserved under the flow \eqref{gDNLS} according to the
phase rotation invariance, spatial translation invariance and time
translation invariance respectively. Compared with
nonlinear Schr\"{o}dinger equation, the equation \eqref{gDNLS} doesn't enjoy the Galilean
invariance and pseudo-conformal invariance any more.

Local well-posedness result for \eqref{gDNLS} with $\sigma\geq 1$ in $H^1(\R)$ has
been worked out by Hayashi and Ozawa \cite{HaOz:gDNLS}. They
combined the compactness method with $L^4_IW^{1,\infty}(\R)$
estimate to construct the local-in-time solution with arbitrary initial data
in the energy space. Since \eqref{gDNLS}
is $\dot H^1$-subcritical, the maximal lifespan interval only depends on
the $H^1$ norm of initial data. More precisely, we have

\begin{theorem}[\cite{HaOz:gDNLS}]
\label{Thm:LWP} Let $\sigma\geq 1$. For any $u_0 \in H^1(\R)$ and $t_0 \in \R$, there
exists a unique maximal-lifespan solution $u:I\times \R \rightarrow
\C$ to \eqref{gDNLS} with $u(t_0)=u_0$, the map $u_0\rightarrow u$ is continuous from $H^1(\R)$ to $C(I, H^1(\R))\cap L^4_{loc}(I, W^{1,\infty}(\R))$.  Moreover, the solution also
has the following properties:
\begin{enumerate}
\item $I$ is an open neighborhood of $t_0$.

\item The mass, momentum and energy are conserved, that is, for all
$t\in I$,
\begin{align*}
M(u)(t)=M(u)(t_0), \;\; P(u)(t)=P(u)(t_0),\;\; E(u)(t)=E(u)(t_0).
\end{align*}

\item If\; $\sup(I)<+\infty$,  $(\text{or}\, \inf(I)>-\infty)$ , then
\begin{align*}
\lim_{t\rightarrow\sup(I)}\big\|\partial_x
u(t)\big\|_{L^2}=+\infty,\;\; \left(
\lim_{t\rightarrow\inf(I)}\big\|\partial_x u(t)\big\|_{L^2}=+\infty,
respectively.\right)
\end{align*}

\item If $\big\| u_0 \big\|_{H^1}$ is sufficiently small,
then $u$ is a global solution.
\end{enumerate}
\end{theorem}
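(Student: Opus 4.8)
\medskip
\noindent\textbf{Proof idea.} This is the local theory of Hayashi--Ozawa, and the scheme is energy estimates combined with the one-dimensional dispersive (Strichartz) bound for the free group $\e^{\i t\partial_x^2}$. The essential obstruction is that the nonlinearity $\i|u|^{2\sigma}u_x$ loses one derivative and, for general $\sigma$, there is no gauge transformation reducing \eqref{gDNLS} to a semilinear equation (as the paper stresses); hence a naive contraction mapping in $C(I,H^1)$ fails. One copes with this through two devices: an algebraic cancellation in the $\dot H^1$ energy identity, and the admissible Strichartz pair $(q,r)=(4,\infty)$, which in one space dimension yields $\|\e^{\i t\partial_x^2}\phi\|_{L^4_t W^{1,\infty}_x}\lesssim\|\phi\|_{H^1}$ together with its inhomogeneous companion, giving a gain of integrability in time for $\|u_x(t)\|_{L^\infty}$.

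\smallskip
\emph{Step 1 (uniform a priori estimates).} For a smooth solution, pairing $\partial_x u_t$ with $\overline{\partial_x u}$ and taking real parts, the third-derivative term is skew and drops, and the only term to control is $-\mathrm{Re}\int\partial_x\big(|u|^{2\sigma}u_x\big)\overline{u_x}\dx$. Writing $\partial_x(|u|^{2\sigma}u_x)=(\partial_x|u|^{2\sigma})u_x+|u|^{2\sigma}u_{xx}$ and integrating the second piece by parts via $\mathrm{Re}(u_{xx}\overline{u_x})=\tfrac12\partial_x|u_x|^2$, both pieces collapse to a multiple of $\int(\partial_x|u|^{2\sigma})|u_x|^2\dx$; in particular \emph{no second derivative survives}, and this is bounded by $C\|u\|_{L^\infty}^{2\sigma-1}\|u_x\|_{L^\infty}\|u_x\|_{L^2}^2$. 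Since mass is conserved, this gives $\frac{d}{dt}\|u(t)\|_{H^1}^2\lesssim\big(1+\|u(t)\|_{H^1}\big)^{2\sigma}\|u_x(t)\|_{L^\infty}\|u(t)\|_{H^1}^2$, and $\int_I\|u_x\|_{L^\infty}\d t\leq|I|^{3/4}\|u\|_{L^4_I W^{1,\infty}_x}$. Applying the Strichartz estimates above to the Duhamel formula (bounding $\||u|^{2\sigma}u_x\|_{L^1_I L^2_x}\lesssim|I|\sup_I\|u\|_{L^\infty}^{2\sigma}\|u_x\|_{L^2}$ for the $L^2$ part, and the $L^2$-norm of its first derivative similarly using the cancellation above) one closes a coupled continuity argument in the two unknowns $\sup_I\|u\|_{H^1}$ and $\|u\|_{L^4_I W^{1,\infty}_x}$: there exist $T=T(\|u_0\|_{H^1})>0$ and $C=C(\|u_0\|_{H^1})$ with $\sup_{|t-t_0|\leq T}\|u(t)\|_{H^1}+\|u\|_{L^4_{(t_0-T,t_0+T)}W^{1,\infty}_x}\leq C$.

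\smallskip
\emph{Step 2 (construction, uniqueness, continuous dependence, conservation).} Approximate $u_0\in H^1$ by smooth data; the corresponding smooth solutions (produced by standard $H^s$ energy estimates, $s$ large, where the same cancellation and Sobolev embedding handle the derivative loss) exist on a common interval $I=(t_0-T,t_0+T)$ and satisfy the uniform bounds of Step~1. Banach--Alaoglu together with the Aubin--Lions lemma (using the equation to bound $\partial_t u$ in a negative-order space) produces a limit $u\in L^\infty_I H^1\cap L^4_I W^{1,\infty}_x$ with strong convergence in $C(I,L^2_{\mathrm{loc}})$, enough to pass to the limit in the nonlinearity; the usual weak-continuity plus norm-continuity argument upgrades to $u\in C(I,H^1)$, and the identities in item~(2) pass to the limit from the smooth solutions. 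For uniqueness let $w=u-v$ with the same data; decomposing $|u|^{2\sigma}u_x-|v|^{2\sigma}v_x=|u|^{2\sigma}w_x+(|u|^{2\sigma}-|v|^{2\sigma})v_x$, computing $\frac{d}{dt}\|w\|_{L^2}^2$ (the term $\mathrm{Re}\int\i w_{xx}\bar w$ drops), integrating $\mathrm{Re}\int|u|^{2\sigma}w_x\bar w=-\tfrac12\int(\partial_x|u|^{2\sigma})|w|^2$ by parts, and using $\big|\,|u|^{2\sigma}-|v|^{2\sigma}\,\big|\lesssim(|u|+|v|)^{2\sigma-1}|w|$ (valid for $\sigma\geq1$), Gr\"onwall with the $L^1_I$ weight $\|u_x\|_{L^\infty}+\|v_x\|_{L^\infty}$ forces $w\equiv0$. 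Continuous dependence of $u_0\mapsto u$ into $C(I,H^1)\cap L^4_{\mathrm{loc}}(I,W^{1,\infty})$ then follows by combining uniqueness with the stability of the a priori estimates, via a Bona--Smith regularization to avoid any derivative loss at the $H^1$ level.

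\smallskip
\emph{Step 3 (items (1), (3), (4), and the main obstacle).} Item~(1) is built into the construction. For item~(3): since the lifespan $T$ in Step~1 depends only on $\|u_0\|_{H^1}$ and the mass is conserved, if $T^\ast=\sup(I)<\infty$ with $\limsup_{t\to T^\ast}\|\partial_x u(t)\|_{L^2}<\infty$ one could solve with data $u(t)$ for $t$ near $T^\ast$ and continue past $T^\ast$, a contradiction (symmetrically at $\inf(I)$). For item~(4): the one-dimensional Gagliardo--Nirenberg inequality gives $\big|\Im\int|u|^{2\sigma}\bar u u_x\dx\big|\lesssim\|u\|_{L^2}^{\sigma+1}\|u_x\|_{L^2}^{\sigma+1}$, so $\tfrac12\|u_x(t)\|_{L^2}^2\leq E(u_0)+C\,M(u_0)^{(\sigma+1)/2}\|u_x(t)\|_{L^2}^{\sigma+1}$; when $\|u_0\|_{H^1}$ is small one has $E(u_0)\sim\tfrac12\|\partial_x u_0\|_{L^2}^2$, and a continuity argument based on this inequality keeps $\|u_x(t)\|_{L^2}$ comparable to $\|\partial_x u_0\|_{L^2}$ as long as the solution exists, whence globality follows from item~(3). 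The genuinely delicate step is Step~1: isolating the cancellation that reduces the $\dot H^1$ estimate to a \emph{first}-order loss, and then closing the coupled $H^1$/$L^4_t W^{1,\infty}_x$ bootstrap with constants depending only on $\|u_0\|_{H^1}$; the rest is standard, if lengthy, compactness and Gr\"onwall work.
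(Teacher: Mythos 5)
First, a point of comparison: the paper does not prove Theorem \ref{Thm:LWP} at all --- it is imported verbatim from \cite{HaOz:gDNLS}, with only the one-line description that the proof ``combined the compactness method with $L^4_IW^{1,\infty}(\R)$ estimate.'' So your proposal can only be measured against that cited strategy, and in broad outline it matches it: the $\dot H^1$ energy identity with the cancellation of the $u_{xx}$ terms (your computation $-\Re\int\partial_x(|u|^{2\sigma}u_x)\overline{u_x}=-\tfrac12\int(\partial_x|u|^{2\sigma})|u_x|^2$ is correct), the role of the admissible pair $(4,\infty)$ in one dimension, the compactness passage to the limit, the $L^2$-level Gr\"onwall uniqueness, the blow-up alternative from the $H^1$-only dependence of the lifespan, and the small-data Gagliardo--Nirenberg argument for item (4) are all sound and consistent with the reference.

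The genuine gap is in the one step that carries the whole theorem: closing the $L^4_I W^{1,\infty}_x$ bound. Your parenthetical ``(and the $L^2$-norm of its first derivative similarly using the cancellation above)'' does not work. To run the inhomogeneous Strichartz estimate at the $W^{1,\infty}$ level you must place $\partial_x\bigl(|u|^{2\sigma}u_x\bigr)$ in a dual Strichartz space, and this expression contains $|u|^{2\sigma}u_{xx}$, which is not controlled in any Lebesgue norm by $u\in L^\infty_I H^1\cap L^4_IW^{1,\infty}_x$. The cancellation you invoke is an artifact of the \emph{symmetric} pairing $\Re\int(\cdot)\overline{u_x}$ in the energy identity --- the second-derivative term is eliminated there only because it pairs against $\overline{u_x}$ and integrates by parts into a total derivative; in the one-sided Duhamel integral $\int_{t_0}^t \e^{\i(t-s)\partial_x^2}\partial_x\bigl(|u|^{2\sigma}u_x\bigr)\,\d s$ there is no such pairing and the loss of one derivative survives. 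Handling this term (the quasilinear nature of \eqref{gDNLS} that the paper emphasizes, and the reason no naive contraction works) is precisely the technical content of \cite{HaOz:gDNLS}; it requires an additional device --- e.g.\ treating $\i|u|^{2\sigma}\partial_x$ as a variable-coefficient first-order perturbation and exploiting smoothing/structure of the Duhamel term, or a regularization for which the $L^4_IW^{1,\infty}_x$ bound is derived by a genuinely different estimate --- none of which appears in your sketch. Until that step is supplied, the coupled bootstrap in $\sup_I\|u\|_{H^1}$ and $\|u\|_{L^4_IW^{1,\infty}_x}$ does not close, and Steps 2--3, which are otherwise routine, have nothing to stand on.
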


The local well-posedness result of \eqref{gDNLS} with $\sigma \geq 1$ in $H^{1/2}\sts{\R}$ is due to Takaoka \cite{Takaoka:DNLS:LWP} by Fourier restriction norm method and gauge transformation and
Santos \cite{San:gDNLS:LWP} by local smoothing effect of the Schr\"{o}dinger operator. The different features between the case $\sigma=1$ and the case $\sigma>1$ are that the former is the integrable system and has the gauge transformation invariance.  In addtion, there are some numerical stability analysis and blowup results of \eqref{gDNLS} in the energy space, please refer to \cite{CherSS:gDNLS:localStruct,LiuSS:gDNLS:Stab}.

At the same time, it is well-known in \cite{KaupN:DNLS:soliton,LiuSS:gDNLS:Stab} that the equation \eqref{gDNLS} has a two-parameter family of solitary wave solutions of the form
\begin{align*}
  u\sts{t,x}=Q_{\omega,c}\sts{x-ct}\e^{\i\omega t},
\end{align*}
where $4\omega > c^2$,
\begin{align}
\label{Q}
  Q_{\omega,c}\sts{x}=\Phi_{\omega,c}(x)\exp \left\{
    \i \frac{c}{2}x-\frac{\i}{2\sigma +
      2}\int^{x}_{-\infty}\Phi_{\omega,c}^{2\sigma}(y)\d y\right\},
\end{align}
and
\begin{equation}
  \label{phi}
  \Phi_{\omega,c}(x)
  =
  \sts{
    \frac{(\sigma+1)(4\omega-c^2)}{
        2\sqrt{\omega}(\cosh(\sigma\sqrt{4\omega-c^2}x)-\frac{c}{2\sqrt{\omega}})
        }
    }^{\frac{1}{2\sigma}}
\end{equation}
is the unique positive solution of
\begin{equation*}
  - \partial_{x}^2\Phi_{\omega,c}  + (\omega -
  \frac{c^2}{4})\Phi_{\omega,c} +\frac{c}{2}|\Phi_{\omega,c}|^{2\sigma}\Phi_{\omega,c} - \frac{2\sigma + 1}{(2\sigma +
    2)^2}|\Phi_{\omega,c}|^{4\sigma}\Phi_{\omega,c}  = 0,
\end{equation*}
up to phase rotation and spatial translation invariance. By the stability criteria in \cite{GrillSS:Stable:87} \cite{GSS:NLS:Stab2}, it was shown that they are orbitally stable when $\sigma\in (0, 1)$, and orbitally unstable when $\sigma\geq 2$ in \cite{LiuSS:gDNLS:Stab}.

For the case $\sigma=1$ and $4\omega > c^2$. On one hand, by  the convex analysis in \cite{PaySatt:Instab}, the structure analysis\footnote{Since the nonlinearity has derivative in \eqref{gDNLS}, the structure analysis of the solitary waves before using the variational argument in \cite{AmbMal:book, IbrMN:NLKG:Scat, NakanishiSchlag:Book:invariant manifold} is used to transform the quasilinear problem into the semilinear problem in principle in \cite{MiaoTX:DNLS:Exist}.} and the variational characterization of the solitary waves, Miao, Tang and Xu obtained the global wellposedness result in some invariant subset $K^{+}$ of the energy space in \cite{MiaoTX:DNLS:Exist}, where the construction of $K^{+}$ is related to the variational characterization of the solitary wave. On the other hand,
 Colin and Ohta \cite{ColinOhta-DNLS} made use of the concentration compactness argument and  proved that the above solitary waves are orbitally stable in the energy space. Because \eqref{gDNLS} is an integrable system,  Nakamura and Chen obtained the explicit
formula of the multi-soliton solutions of \eqref{gDNLS} in \cite{NakChen:DNLS:Mulsol} by Hirota's
bilinear transform method. Recently, Miao, Tang and Xu in \cite{MiaoTX:DNLS:Stab} and Le Coz and Wu in \cite{LeWu:DNLS} independently showed the stability of the sum of the multi-soliton waves with weak interactions in the energy space, where the arguments are both based on the perturbation argument, the modulation stability and the energy argument in \cite{MartelMT:Stab:gKdV, MartelMT:Stab:NLS}.

For the case $\sigma\in(1,2)$ and $4\omega > c^2$.
Fukaya, Hayashi and Inui  showed the variational characterization of the solitary waves of \eqref{gDNLS} in \cite{FuHaIn:gDNLS:GWP}, i.e.
$Q_{\omega,c}$ is a minimizer of the following problem:
\begin{align}
\label{d}
    d\sts{\omega,c}
    =
    \inf\set{S_{\omega,c}\sts{\varphi} }{ \varphi\in H^1\setminus\ltl{0},
    ~~ K_{\omega,c}\sts{\varphi}=0  }
\end{align}
where the action functional is defined by
\begin{equation}
  \label{S}
  S_{\omega,c}(\varphi)=E(\varphi)+ \omega M(\varphi) + c P(\varphi),
\end{equation}
and the scaling derivative functional is defined by
\begin{equation}
  \label{K}
  K_{\omega,c}(\varphi)
  =
  \left.\frac{\d}{\d \lambda}S_{\omega,c}(\lambda\varphi)\right|_{\lambda=1}.
\end{equation}
In addition, they also obtained  the global well-posedness
of the solution to \eqref{gDNLS} in the similar invariant subset $K^{+}$ as that in \cite{MiaoTX:DNLS:Exist}.

Next we consider its stability in the energy space. Let $z_0=z_0(\sigma)$ be the unique solution in \(\sts{-1,1}\) of $ F(z_0;\sigma) =0,$
    where $F(z;\sigma)$ is defined by
    \begin{align}
    \nonumber
        F(z; \sigma) = & (\sigma-1)^2 \ltl{\int_0^\infty (\cosh y -z)^{-\frac{1}{\sigma}} \d y}^2\\
        &- \ltl{\int_0^\infty (\cosh y -z)^{-\frac{1}{\sigma}-1}(z
          \cosh y -1)\d y }^2.\label{eq:sig-z}
    \end{align}
By the stability criteria in \cite{GSS:NLS:Stab2}, Liu, Simpson and Sulem numerically showed that the solitary wave is stable for $c\in (-2\sqrt{\omega}, 2z_0\sqrt{\omega})$   and unstable for $c\in (2z_0\sqrt{\omega}, 2\sqrt{\omega})$ in \cite{LiuSS:gDNLS:Stab}. That is,
\begin{theorem}[\cite{LiuSS:gDNLS:Stab}]
\label{mainthm:single}Let $\sigma\in\sts{1,2}$ and $z_0=z_0(\sigma)\in\sts{-1,1}$ satisfy $ F(z_0;\sigma) =0,$
    where $F(z;\sigma)$ is defined by \eqref{eq:sig-z}. Let $(\omega^0, c^0)\in \R^2,$ satisfy $c^0\in\sts{-2\sqrt{\omega^0} ,2 z_0 \sqrt{\omega^0}}$, the solitary wave  $
Q_{\omega^0,c^0}(x-c^0t)e^{i\omega^0 t } $ to \eqref{gDNLS} is
orbitally stable in the energy space. That is, for any $\epsilon>0$,
there exists $\delta>0$ such that if $u_0\in H^1(\R)$ satisfies
$$\big\|u_0(\cdot)-
Q_{\omega^0,c^0}(\cdot-x^0)e^{i\gamma^0
}\big\|_{H^1(\R)}<\delta$$ for some $(x^0, \gamma^0)\in \R^2$, then
the solution $u(t)$ of \eqref{gDNLS} exists globally in time and
satisfies
\begin{align*}
\sup_{t\geq0}\inf_{(y, \gamma)\in \R^2}\big\|u(t,\cdot)-
Q_{\omega^0,c^0}(\cdot-y)e^{i\gamma
}\big\|_{H^1(\R)}<\eps.
\end{align*}
\end{theorem}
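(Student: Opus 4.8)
The plan is to prove Theorem~\ref{mainthm:single} via the Grillakis--Shatah--Strauss stability theory \cite{GSS:NLS:Stab2}, taking the variational characterization~\eqref{d} of Fukaya--Hayashi--Inui as the main structural input. Fix $(\omega^0,c^0)$ with $c^0\in(-2\sqrt{\omega^0},2z_0\sqrt{\omega^0})$, set $Q^0:=Q_{\omega^0,c^0}$, and let $L^0:=S_{\omega^0,c^0}''(Q^0)$ be the linearized action operator (here $S_{\omega,c}'(Q_{\omega,c})=0$ is exactly the profile equation). The symmetry group acting here is generated by phase rotation and spatial translation, with generators $\i Q^0,\partial_x Q^0\in\ker L^0$ and conserved quantities $M,P$; hence the pertinent object is the $2\times2$ matrix $\mathbf d''(\omega^0,c^0)$, the Hessian of $d(\omega,c):=S_{\omega,c}(Q_{\omega,c})$, for which $\partial_\omega d=M(Q_{\omega,c})$ and $\partial_c d=P(Q_{\omega,c})$ by the envelope identities. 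Two things must be checked: (i) the spectral picture of $L^0$, namely a single negative eigenvalue, $\ker L^0=\spn\{\i Q^0,\partial_x Q^0\}$, and the rest of the spectrum positive and bounded away from $0$; and (ii) the signature of $\mathbf d''(\omega^0,c^0)$, which is where the threshold $c^0=2z_0\sqrt{\omega^0}$ enters. Along the way one records that $(\omega,c)\mapsto Q_{\omega,c}$ is $C^2$ into $H^1(\R)$ on the open region $4\omega>c^2$ (implicit function theorem on the profile equation for $\Phi_{\omega,c}$), so that $d$ is $C^2$ near $(\omega^0,c^0)$ as \cite{GSS:NLS:Stab2} requires.

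For (i), the upper bound $n(L^0)\leq1$ is essentially immediate from~\eqref{d}: since $Q^0$ minimizes $S_{\omega^0,c^0}$ on the manifold $\{\varphi\in H^1\setminus\{0\}:K_{\omega^0,c^0}(\varphi)=0\}$, the second-order condition at a constrained minimum gives $\langle L^0 v,v\rangle\geq0$ whenever $\langle K_{\omega^0,c^0}'(Q^0),v\rangle=0$, so $L^0\geq0$ on a subspace of codimension one and therefore has at most one negative direction. For the reverse inequality I would use $S_{\omega^0,c^0}(\lambda Q^0)=\lambda^2A+\lambda^{2\sigma+2}B$ with $A>0$ (this is where $4\omega^0>(c^0)^2$ is used) and, since $K_{\omega^0,c^0}(Q^0)=0$, with $B=-A/(\sigma+1)$; differentiating twice at $\lambda=1$ gives $\langle L^0Q^0,Q^0\rangle=-4\sigma A<0$, so $n(L^0)=1$, and incidentally $\langle K_{\omega^0,c^0}'(Q^0),Q^0\rangle=-4\sigma A\neq0$, confirming that the constraint is a genuine hyperplane. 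The kernel nondegeneracy $\ker L^0=\spn\{\i Q^0,\partial_x Q^0\}$ is the subtle point: because of the derivative nonlinearity $L^0$ is not directly of scalar Schr\"odinger type, so I would conjugate it, away from the two-dimensional symmetry subspace, by the gauge-type change of variables of the structure analysis (in the spirit of \cite{MiaoTX:DNLS:Exist,FuHaIn:gDNLS:GWP,ColinOhta-DNLS}) to a matrix Schr\"odinger operator with classical scalar diagonal blocks whose ground states are simple and nondegenerate; this yields the stated kernel, and since the essential spectrum lies in $[\omega^0-(c^0)^2/4,\infty)$ with $\omega^0-(c^0)^2/4>0$, the spectral gap above $0$ follows.

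For (ii), I would compute $\mathbf d''$ via the explicit profile~\eqref{phi}. Writing $z=c/(2\sqrt{\omega})\in(-1,1)$ and changing variables $y=2\sigma\sqrt{\omega}\sqrt{1-z^2}\,x$, one obtains $M(Q_{\omega,c})=C_\sigma\,\omega^{(1-\sigma)/2\sigma}(1-z^2)^{(2-\sigma)/2\sigma}g(z)$ and, from the identity $P=-\tfrac{c}{2}M+\tfrac1{4(\sigma+1)}\int\Phi_{\omega,c}^{2\sigma+2}$, an analogous expression $P(Q_{\omega,c})=\omega^{1/2\sigma}\,\widetilde P(z)$ with $\widetilde P$ a fixed combination of $g$ and $g'$; here $g(z):=\int_{\R}(\cosh y-z)^{-1/\sigma}\d y>0$, $g'(z)=\tfrac1\sigma\int_{\R}(\cosh y-z)^{-1/\sigma-1}\d y$, and $g$ solves the Gegenbauer-type ODE $(1-z^2)g''-\tfrac{\sigma+2}{\sigma}zg'-\tfrac1{\sigma^2}g=0$. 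Differentiating these in $(\omega,c)$, the powers of $\omega$ factor out of $\det\mathbf d''(\omega^0,c^0)$, the ODE eliminates $g''$, and, after the algebraic rewriting $z\cosh y-1=z(\cosh y-z)-(1-z^2)$ that exhibits the two brackets in~\eqref{eq:sig-z}, one finds that $\det\mathbf d''(\omega^0,c^0)$ is a positive multiple of $F(z_*;\sigma)$ with $z_*=c^0/(2\sqrt{\omega^0})$. Since $F(-1;\sigma)=\tfrac14\sigma(\sigma-2)g(-1)^2<0$ for $\sigma\in(1,2)$ and $z_0$ is the unique zero of $F(\cdot;\sigma)$ in $(-1,1)$, we get $\det\mathbf d''(\omega^0,c^0)<0$ precisely when $z_*<z_0$, i.e.\ precisely when $c^0<2z_0\sqrt{\omega^0}$; in that case $\mathbf d''(\omega^0,c^0)$ is indefinite, so its number of positive eigenvalues equals $1=n(L^0)$, which is exactly the stability criterion of \cite{GSS:NLS:Stab2}. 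Global existence of the perturbed solution is automatic, since remaining $H^1$-close to $Q^0$ bounds $\|\partial_x u(t)\|_{L^2}$ and the blow-up alternative of Theorem~\ref{Thm:LWP}(3) excludes finite-time breakdown (alternatively one invokes the invariance of $K^{+}$ and the global well-posedness of \cite{FuHaIn:gDNLS:GWP}).

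The main obstacle is the kernel nondegeneracy in step~(i): the derivative nonlinearity obstructs any direct reduction to scalar Schr\"odinger operators, so the gauge transformation of the structure analysis has to be implemented carefully and the nondegeneracy of the transformed ground state established throughout a neighbourhood of $(\omega^0,c^0)$, not merely at that point. The only other non-trivial piece is bookkeeping: checking that $\det\mathbf d''$ collapses, up to a manifestly positive factor, to the function $F(z;\sigma)$ of~\eqref{eq:sig-z} requires a careful but routine manipulation of the integrals $g,g'$ and the ODE they satisfy. Everything else — the envelope identities, the $C^2$-dependence of $Q_{\omega,c}$, the bound $n(L^0)\le1$ from~\eqref{d}, and the concluding appeal to \cite{GSS:NLS:Stab2} — is standard.
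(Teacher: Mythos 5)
Your proposal is correct and follows essentially the same route as the source: the paper does not prove Theorem~\ref{mainthm:single} itself but quotes it from \cite{LiuSS:gDNLS:Stab}, whose argument is precisely the Grillakis--Shatah--Strauss criterion you describe, and your spectral steps (essential spectrum in $[\omega^0-(c^0)^2/4,\infty)$, nonnegativity of $S''_{\omega^0,c^0}(Q^0)$ on the constraint hyperplane from the variational characterization \eqref{d}, $\langle S''Q^0,Q^0\rangle=-4\sigma A<0$, the kernel identification deferred to a gauge/structure analysis, and the sign of $\det d''(\omega^0,c^0)$ expressed through $F(z;\sigma)$) coincide with what the paper itself assembles in Proposition \ref{prop:coercivity} and the remark following Lemma \ref{lem:variation}. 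The only points left at the same level of incompleteness as the paper are the kernel nondegeneracy (cited there as Proposition 3.6 of \cite{LiuSS:gDNLS:Stab}) and the reduction of $\det d''$ to $F$ (Theorem 4.3 of \cite{LiuSS:gDNLS:Stab}); your sign bookkeeping $F(-1;\sigma)=\sigma(\sigma-2)\bigl(\int_0^\infty(\cosh y+1)^{-1/\sigma}\d y\bigr)^2<0$ is verifiably correct and in fact shows that the figure caption's claim that $F$ is monotonically decreasing in $z$ cannot be taken literally.
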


\begin{remark}
For the case $\sigma\in(3/2, 2)$ and $ c^0=2z_0\sqrt{\omega^0}$, Fukaya shown that the traveling wave is still unstable in \cite{Fukaya:gDNLS:bdline}. It is notice that it are still open problem whether the solitary waves with any $\sigma>0$ and the critical case $ c^0= 2\sqrt{\omega^0}$ are stable or not. In fact, the solitary waves with the critical parameter $c^0= 2\sqrt{\omega^0}$ have polynomial decay, and the difficulty is that there is no the spectral gap about the linearized operator around the solitary wave.
\end{remark}

In this paper, we consider the stability of the sum of two solitary waves for \eqref{gDNLS} with $\sigma\in (1,2)$ and $c^0_k\in (-2\sqrt{\omega^0_k}, 2z_0(\sigma)\sqrt{\omega^0_k})$, $k=1, 2$. As far as we know, the integrability (non-integrability) of \eqref{gDNLS} is not clear,
the existence (nonexistence) of the explicit multi-solition solutions is not
obvious. Here we use the argument in \cite{MiaoTX:DNLS:Stab} (see also \cite{LeWu:DNLS, MartelMT:Stab:gKdV, MartelMT:Stab:NLS}) and the references therein.

The main result is the following result.

\begin{theorem}
\label{mainthm}Let $\sigma\in\sts{1,2}$ and $z_0=z_0(\sigma)\in\sts{0,1}$ satisfy $ F(z_0;\sigma) =0,$
    where $F(z;\sigma)$ is defined by \eqref{eq:sig-z}. Let $(\omega^0_k, c^0_k)\in \R^2,$ $k=1,2$ satisfy
    \begin{enumerate}
    \item[$(a)$] Nonlinear stability:
        $c^0_k\in\sts{-2\sqrt{\omega^0_k},2 z_0 \sqrt{\omega^0_k}}$ for $k=1, 2$.

    \item[$(b)$] Technical assumption: $\frac{\omega^0_2-\omega^0_1}{c^0_2-c^0_1}>0.$

    \item[$(c)$] Relative speed: $c^0_1 <
    \frac{\omega^0_2-\omega^0_1}{c^0_2-c^0_1}$, and $
    4\frac{\omega^0_2-\omega^0_1}{c^0_2-c^0_1}<c^0_2$.
    \end{enumerate}
    Then there exist positive numbers $C$, $\delta_0$, $\theta_0$ and $L_0$, such that if
    $0<\delta<\delta_0,\; L>L_0$ and
    \begin{align*}
    \left\|u_0(\cdot)-\sum^2_{k=1}Q_{\omega^0_k,
    c^0_k}(\cdot-x^0_k)e^{i\gamma^0_k}\right\|_{H^1(\R)} \leq \delta,
    \end{align*}
    with $ x^0_2-x^0_1>L$, then the solution $u(t)$ of \eqref{gDNLS}
    exists globally in time and there exist functions
    $x_k(t)$ and $\gamma_k(t)$, $k=1,2$ such that for any $t\geq 0$,
    \begin{align*}
    \left\|u(t,\cdot)-\sum^2_{k=1}Q_{\omega^0_k,
    c^0_k}(\cdot-x_k(t))e^{i\gamma_k(t)}\right\|_{H^1(\R)} \leq
    C\left(\delta+e^{-\theta_0 \frac{L}{2}}\right).
    \end{align*}
\end{theorem}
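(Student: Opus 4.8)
The plan is to run the perturbation, modulation and energy scheme of \cite{MartelMT:Stab:gKdV, MartelMT:Stab:NLS} in the form adapted to (gDNLS) in \cite{MiaoTX:DNLS:Stab} (see also \cite{LeWu:DNLS}). Write $Q_k:=Q_{\omega^0_k,c^0_k}$ and $v^{\ast}:=\tfrac{\omega^0_2-\omega^0_1}{c^0_2-c^0_1}$; note that $(b)$ gives $v^{\ast}>0$ and that $(c)$ then yields $c^0_1<v^{\ast}<c^0_2$, in particular $c^0_1<c^0_2$. Fix a large constant $A>0$, and for $0<\delta<\delta_0$, $L>L_0$ let $t^{\ast}\in(0,+\infty]$ be the supremum of the times $t$ for which $u$ stays, on all of $[0,t]$, within $H^1(\R)$-distance $A\sts{\delta+e^{-\theta_0 L/2}}$ of the set $\set{\sum_{k=1}^2 Q_k(\cdot-y_k)e^{i\beta_k}}{y_2-y_1>L/2,\ (\beta_1,\beta_2)\in\R^2}$; by Theorem~\ref{Thm:LWP} and continuity of the flow, $t^{\ast}>0$. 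On $[0,t^{\ast})$ the implicit function theorem provides $C^1$ modulation parameters $x_k(t),\gamma_k(t)$ (the scaling parameters being frozen at $\omega^0_k,c^0_k$, as in \cite{MartelMT:Stab:NLS}) such that, with $R(t):=\sum_k Q_k(\cdot-x_k(t))e^{i\gamma_k(t)}$ and $\eta(t):=u(t)-R(t)$, the error $\eta$ is orthogonal to the phase and translation generators $iQ_k(\cdot-x_k(t))e^{i\gamma_k(t)}$, $\partial_x Q_k(\cdot-x_k(t))e^{i\gamma_k(t)}$ of each transported profile. Differentiating these relations against \eqref{gDNLS} gives the modulation bounds $\abs{\dot x_k-c^0_k}+\abs{\dot\gamma_k-\omega^0_k}\lesssim\norm{\eta(t)}_{H^1(\R)}+e^{-\theta_0(x_2(t)-x_1(t))}$; since $c^0_1<c^0_2$ the two bumps separate, $x_2(t)-x_1(t)\gtrsim L+t$ on $[0,t^{\ast})$, so all soliton--soliton interaction terms are $\bigo{e^{-\theta_0(L+t)}}$ and integrable in time.

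The next step is to build a Lyapunov functional from localized conservation laws. Set $m(t):=\tfrac12(x^0_1+x^0_2)+v^{\ast}t$; using $c^0_1<v^{\ast}<c^0_2$ one checks $x_1(t)<m(t)<x_2(t)$ with $m(t)-x_1(t)$ and $x_2(t)-m(t)$ both $\gtrsim L+t$. Choose $\phi_1,\phi_2\geq0$ with $\phi_1+\phi_2\equiv1$ and $\phi_2$ a smooth step from $0$ to $1$ across $\abs{x-m(t)}\lesssim L^{1/2}$, set $M_k(t):=\tfrac12\int\abs{u}^2\phi_k\dx$ and $P_k(t):=-\tfrac12\Im\int(\bar uu_x)\phi_k\dx$, and let
\[
\mathcal H(t):=E(u(t))+\sum_{k=1}^2\omega^0_k M_k(t)+\sum_{k=1}^2 c^0_k P_k(t).
\]
Expanding $u=R+\eta$ and using $S'_{\omega^0_k,c^0_k}(Q_k)=0$ (so the term linear in $\eta$ is $\bigo{e^{-\theta_0 L}}$, produced only by the cut-offs), one should obtain, for some $\lambda>0$,
\[
\mathcal H(t)-\sum_{k=1}^2 S_{\omega^0_k,c^0_k}(Q_k)\ \geq\ \lambda\norm{\eta(t)}_{H^1(\R)}^2-C\sts{e^{-\theta_0 L}+\norm{\eta(t)}_{H^1(\R)}^3}.
\]
This coercivity is exactly the linearized orbital stability of each single solitary wave underlying Theorem~\ref{mainthm:single}: by \cite{GrillSS:Stable:87, GSS:NLS:Stab2} and the analysis in \cite{LiuSS:gDNLS:Stab}, assumption $(a)$, i.e. $c^0_k\in\sts{-2\sqrt{\omega^0_k},2z_0\sqrt{\omega^0_k}}$, makes $\dual{S''_{\omega^0_k,c^0_k}(Q_k)\eta_k}{\eta_k}$ positive definite modulo the kernel $\spn\ltl{iQ_k,\partial_x Q_k}$ (removed by the orthogonality conditions) and the two scaling directions $\partial_\omega Q_k,\partial_c Q_k$; the latter cause no trouble because $M(u)$ and $P(u)$ are conserved and $\bigo{\delta^2}$-close to $M(R),P(R)$, forcing the corresponding components of $\eta$ to be that small.

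The heart of the argument is the almost-monotonicity of $\mathcal H$. Since $E(u)$ is conserved, $\tfrac{d}{dt}\mathcal H=\sum_k\sts{\omega^0_k\dot M_k+c^0_k\dot P_k}$, and after integration by parts each term on the right is the integral of a quadratic expression in $(u,u_x)$ (plus a higher-order piece) against $\phi_k'$ or $v^{\ast}\phi_k'$, hence supported in $\abs{x-m(t)}\lesssim L^{1/2}$. There $R$ and its derivatives are $\bigo{e^{-\theta_0(L+t)}}$, so the soliton part is an integrable error of size $\bigo{e^{-\theta_0 L}}$; the $\eta$-quadratic part is, up to such errors, a localized quadratic form in $(\eta,\partial_x\eta)$ combining $\abs{\partial_x\eta}^2$ (from the momentum flux), $\abs{\eta}^2$ (from the mass flux and from the transport term $-\tfrac{v^{\ast}}{2}\abs{\eta}^2$ coming from $\dot m=v^{\ast}$), and a cross term, and assumptions $(a)$--$(c)$ — the factor $4$ in $(c)$ arising from completing a square in that form — are precisely what make it sign-definite, so that it may be discarded and $\tfrac{d}{dt}\mathcal H(t)\leq Ce^{-\theta_0(L+t)}$. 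Integrating gives $\mathcal H(t)\leq\mathcal H(0)+Ce^{-\theta_0 L}$ on $[0,t^{\ast})$. Feeding this into the coercivity estimate, together with a Taylor expansion of $\mathcal H$ at $t=0$ about $R(0)$ and $\norm{\eta(0)}_{H^1(\R)}\lesssim\delta+e^{-\theta_0 L/2}$, and absorbing the cubic remainder once $\delta_0$ is small and $L_0$ large (in terms of $A$), one obtains $\norm{\eta(t)}_{H^1(\R)}\leq C_0\sts{\delta+e^{-\theta_0 L/2}}$ with $C_0$ independent of $A$. Choosing $A\geq 2C_0$ improves the bound defining $t^{\ast}$ from $A$ to $A/2$, so $t^{\ast}=+\infty$ by continuity; since $\norm{\partial_x u(t)}_{L^2}$ then stays bounded, Theorem~\ref{Thm:LWP}(3) yields global existence, and $x_k(t),\gamma_k(t)$ are the asserted modulation functions.

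I expect the almost-monotonicity step to be the main obstacle: since (gDNLS) has neither the Galilean nor the gauge invariance, one cannot pass to a travelling frame to trivialize the momentum, and the sign of the localized flux of $\sum_k(\omega^0_k M_k+c^0_k P_k)$ along a cut-off transported at the fixed intermediate speed $v^{\ast}$ is a genuine constraint — it is exactly this that dictates the admissibility conditions $(b)$ and $(c)$ on the relative parameters. A secondary difficulty is that the derivative nonlinearity $|u|^{2\sigma}u_x$ forces integration by parts (or use of the $L^4_{loc}W^{1,\infty}$ regularity of Theorem~\ref{Thm:LWP}) when estimating the flux terms and the cubic remainder in the expansion of $\mathcal H$.
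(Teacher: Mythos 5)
Your overall architecture (bootstrap in time, modulation, a localized action functional built from localized mass and momentum, almost-monotonicity, coercivity) is the same as the paper's, but two of your specific choices create genuine gaps at exactly the technical heart of the argument. The first is the decision to freeze the scaling parameters at $(\omega^0_k,c^0_k)$ and impose only the two orthogonality conditions $\eta\perp iQ_k$, $\eta\perp\partial_xQ_k$ per soliton, claiming the remaining bad directions "cause no trouble because $M(u)$ and $P(u)$ are conserved". This does not suffice. For each $k$ the operator $S''_{\omega^0_k,c^0_k}(Q_k)$ has a one-dimensional negative eigenspace (Proposition \ref{prop:coercivity}, Steps 3--4), which is neutralized through the direction $U_k=\xi_1\partial_\omega Q_k+\xi_2\partial_c Q_k$ with $\biact{d''\sts{\omega^0_k,c^0_k}}{\bm\xi}{\bm\xi}<0$; this requires the two extra conditions $\eta\perp Q_k$ and $\eta\perp i\partial_xQ_k$ \emph{for each $k$ separately}, which is precisely why the paper modulates in all eight parameters and imposes \eqref{orth:dyn}. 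Conservation of the total mass and momentum only constrains the sums $\sum_k\Re\inprod{\eta}{R_k}$ and $\sum_k\Im\inprod{\eta}{\partial_xR_k}$; to separate the two solitons you need two-sided almost-conservation of the mass and momentum localized around \emph{each} soliton. Since the monotonicity formulas give only one-sided bounds, the paper has to introduce the four auxiliary functionals $\mathcal J_{\pm,0}$, $\mathcal J_{0,\pm}$ transported at the speeds $\tfrac{\omega^0_2-\omega^0_1}{c^0_2-c^0_1}$ and $4\tfrac{\omega^0_2-\omega^0_1}{c^0_2-c^0_1}$ (Corollary \ref{coro:mono2}, Lemma \ref{lem:mass}); the requirement that these auxiliary speeds still lie strictly between $c^0_1$ and $c^0_2$ is where the factor $4$ in assumption $(c)$ actually enters, not a completion of the square in the flux quadratic form. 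Your sketch omits this entire layer, and it is not recoverable from total conservation alone.

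The second gap is the cut-off: you take a fixed step of width $L^{1/2}$ transported at speed $v^{\ast}$. Differentiating it in time produces, among other terms, $\tfrac12\int\abs{u}^2g'''$ from Lemma \ref{lem:virial} with a prefactor of size $L^{-3/2}$ \emph{uniformly in time}, so the error in $\tfrac{d}{dt}\mathcal H$ is of order $L^{-3/2}\sts{\norm{\eta(t)}_{H^1}^2+e^{-\theta_0(L+\theta_0t)}}$ at each time and its time integral grows linearly in $t$. This destroys the uniform-in-time bound $\mathcal H(t)\le\mathcal H(0)+Ce^{-\theta_0L}+\tfrac{C}{L}\sup_s\norm{\eta(s)}_{H^1}^2$ that the bootstrap requires. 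The paper avoids this with the self-similar cut-off $\varphi\sts{(x-\bar x^0-\mu t)/\sqrt{t+a}}$, $a=L^2/64$, so that every cut-off-derivative term carries the time-integrable weight $(t+a)^{-3/2}$ (see \eqref{viri:1} and the proof of Proposition \ref{prop:mono1}). With these two repairs --- full modulation in $(\omega_k,c_k,x_k,\gamma_k)$ with the four orthogonality conditions, the spreading cut-offs, and the auxiliary monotone functionals closing the parameter-drift estimates --- your outline would match the paper's proof.
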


\begin{remark}
\begin{enumerate}
  \item \textit{ The function $F(z;\sigma)$ and existence of $z_0.$}
  In order to use the abstract functional analysis argument
  in \cite{GrillSS:Stable:87,GSS:NLS:Stab2}, Liu, Simpson and Sulem introduced  the function $F(z;\sigma)$  to obtain the stability
   (instability) of single soliton solutions of \eqref{gDNLS}
   in \cite{LiuSS:gDNLS:Stab}. The function $F(z;\sigma)$ is closely related to the determinant of the Hessian $d''\sts{\omega,c}$. It numerically turns out that for any fixed $\sigma\in\sts{1,2}$, the function $F(z;\sigma)$ is monotonically decreasing with respect to $z$ and has exactly one root $z_0$ in the interval $\sts{-1,1}.$ See Figure \ref{fig:Fcurves}.
   \begin{figure}[ht]
\includegraphics[width=8cm]{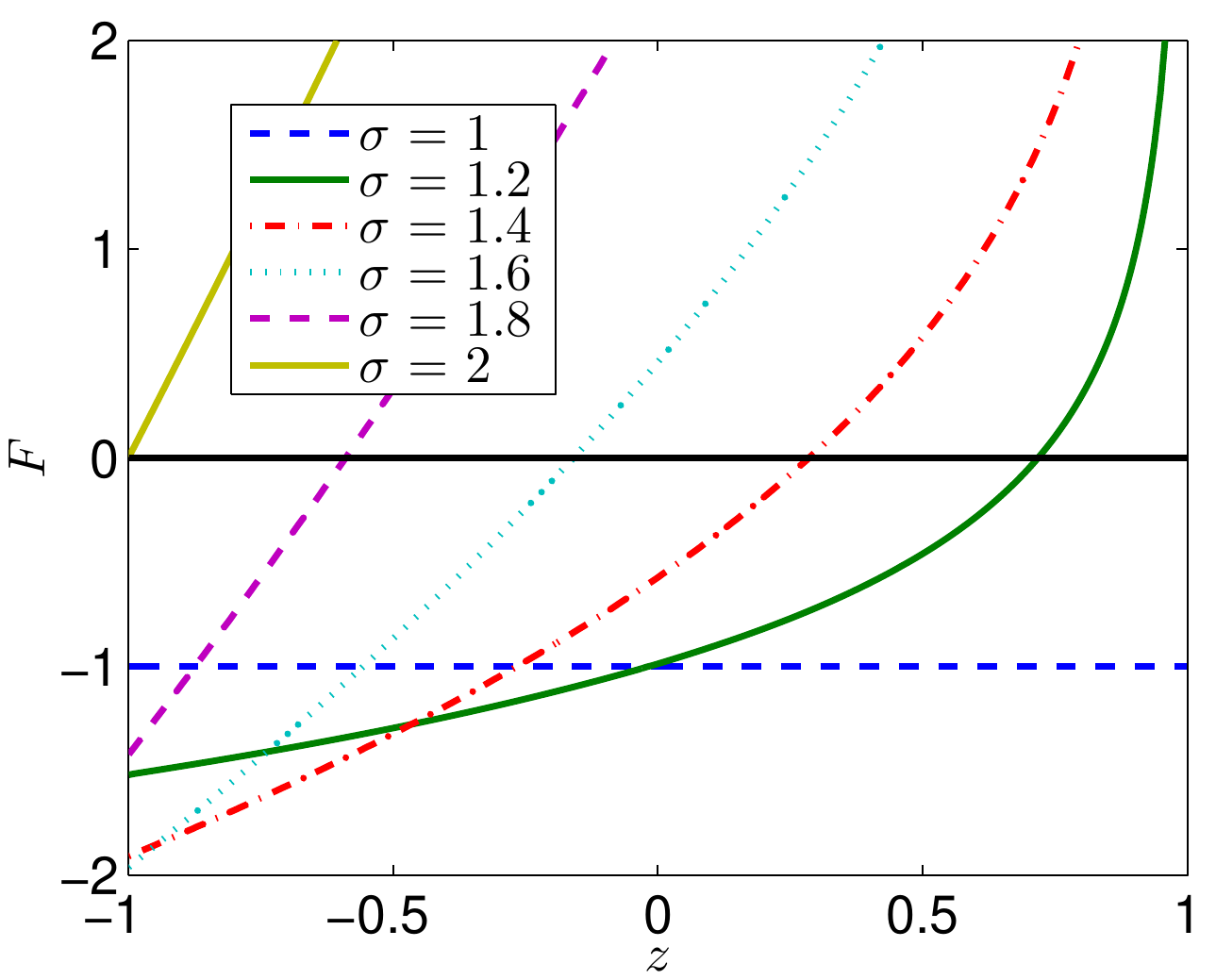}
\caption{$z_0$ is positive for $\sigma$ near $1,$ and becomes negative as $\sigma$ is close to $2$. Thus, the condition $z_0\in\sts{0,1}$ means that $\sigma$ can not be close to $2$. This figure comes from \cite{LiuSS:gDNLS:Stab}.}\label{fig:Fcurves}
\end{figure}

   \item \textit{ The technical assumption}{:}
    $\frac{\omega^0_2-\omega^0_1}{c^0_2-c^0_1}>0$. Because of the fact that the radiation term cannot separate from the solitary wave along the flow \eqref{gDNLS}, this technical assumption allows us to deal with some "bad" term with good sign in \eqref{3}, see the monotonicity formulas in Section \ref{sec:mono} for more details.

   \item \textit{ The relative speed assumption}: In fact, it is sufficient that $c^0_1
    <
    2\frac{\omega^0_2-\omega^0_1}{c^0_2-c^0_1}
    <c^0_2
    $ from our proof. However,
    we suppose
    $c^0_1 <
    \frac{\omega^0_2-\omega^0_1}{c^0_2-c^0_1}$, and
    $4\frac{\omega^0_2-\omega^0_1}{c^0_2-c^0_1}<c^0_2$ for the convenience.
    In addition, combining $4\frac{\omega^0_2-\omega^0_1}{c^0_2-c^0_1}<c^0_2$
    with $\frac{\omega^0_2-\omega^0_1}{c^0_2-c^0_1}>0,$ we immediately obtains that
    $ c_2^0 $ and $z_0(\sigma)$ need to be positive.

    \item
The stability of the sum of two solitary waves can be easily extended to that of the $k$ solitary
waves case, $k\geq 3$.
\end{enumerate}
\end{remark}

At last, the paper is organized as following.  In Section
\ref{sect:PreRes},  we introduce the linearized operator around
the solitary wave, and show the coercivity property of the
linearized operator under the geometric constraints; In Section \ref{sect:mod}, we
give the modulation analysis of the solution around the sum of two
solitary waves with weak interactions. In Section \ref{sect:mon}, we introduce some extra monotonicity formulas and their
variance along the flow \eqref{gDNLS}. In Section \ref{sect:proof},  we firstly introduce
a localized action functional, which is almost conserved by the
monotonicity formula and the conservation laws of mass, momentum and
energy, to refine the energy estimate about the radiation term in the
modulation analysis of the solution; secondly, we use some
monotonicity formulas to refine the estimates of the parameter variance
$|\omega_k(t)-\omega_k(0)|+|c_k(t)-c_k(0)|$, $k=1, 2$ besides of the
conservation laws of mass and momentum. These refined estimates improve
the energy estimate of the radiation term in the modulation analysis and imply Theorem \ref{mainthm} together with the bootstrap argument in \cite{MartelMT:Stab:gKdV, MartelMT:Stab:NLS} (see also \cite{LeWu:DNLS, MiaoTX:DNLS:Stab}). In Appendix A, for all solitary waves $Q_k^0(x)=Q_{\omega^0_k,
    c^0_k}(x)$, $k=1, 2,$ which satisfy the conditions in Theorem \ref{mainthm}, we verify the fact that
\begin{align*}
    \ltl{ 2M\sts{Q_k^0}\norm{ \partial_x Q_k^0}_{L^2}^2 - 4\left[P\sts{ {Q}_k^0 } \right]^2} \not = 0,
\end{align*}
which is used to show the non-degenerate condition \eqref{nondeg:determ}. In Appendix B, we give the expansion of the action functional $\mathcal{S}(t)$ (i.e., Lemma \ref{lem:tlexp}) in details.

\section{Preliminary results}\label{sect:PreRes}
In this section, we give some basic facts about the solitary waves for \eqref{gDNLS}. Let $(\omega, c)\in \R^2$ with $4\omega>c^2$, and $u\sts{t,x}=\varphi_{\omega,c}\sts{x-ct}\e^{\i\omega t}$ be a solution of \eqref{gDNLS}, it is easy to check that $\varphi_{\omega,c}$ satisfies
\begin{align}\label{sltn:eq}
   \omega {\varphi_{\omega,c}}-\partial^2_x\varphi_{\omega,c} +\i c \partial_x{\varphi_{\omega,c}} - \i\abs{{\varphi_{\omega,c}} }^{2\sigma} {\partial_x \varphi_{\omega,c}}=0.
\end{align}
Now define the set $ \mathcal G_{\omega,c}$ of the solitary waves to \eqref{gDNLS}
\[
    \mathcal G_{\omega,c}
    =
    \{\varphi_{\omega,c}\in H^1\sts{\R}\setminus\ltl{0} : \varphi_{\omega,c} ~\text{satisfies}~ \eqref{sltn:eq}\}.
\]
and let
\begin{align}
\label{Q}
  Q_{\omega,c}\sts{x}=\Phi_{\omega,c}(x)\exp \left\{
    \i \frac{c}{2}x-\frac{\i}{2\sigma +
      2}\int^{x}_{-\infty}\Phi_{\omega,c}^{2\sigma}(y)\d y\right\},
\end{align}
with
\begin{equation}
  \label{phi}
  \Phi_{\omega,c}(x)
  =
  \sts{
    \frac{(\sigma+1)(4\omega-c^2)}{
        2\sqrt{\omega}(\cosh(\sigma\sqrt{4\omega-c^2}x)-\frac{c}{2\sqrt{\omega}})
        }
    }^{\frac{1}{2\sigma}}.
\end{equation}

The first result is the variational characterization lemma of the solitary waves.
\begin{lemma}[Variational characterization of solitary waves \cite{FuHaIn:gDNLS:GWP}]
\label{lem:variation}
Suppose $\sts{\omega,c}\in \R^2$ satisfies $4\omega>c^2$. Let $d\sts{\omega,c}$, $S_{\omega,c}$ and $K_{\omega,c}$ be defined by \eqref{d}, \eqref{S} and \eqref{K} respectively. Then we have
\begin{align*}
\mathcal G_{\omega,c}= & ~ \left\{ \varphi\in H^1(\R)
\setminus\ltl{0}:S_{\omega,c}(\varphi)=  d\sts{\omega,c},\,K_{\omega,c}(\varphi)=0\, \right\} \\
= & ~ \left\{Q_{\omega,c}(\cdot-y)\e^{\i\theta} : \theta \in [0,2\pi), y\in \R \right\}.
\end{align*}
\end{lemma}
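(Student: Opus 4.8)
The plan is to prove the two displayed equalities separately: the second one, $\mathcal G_{\omega,c}=\{Q_{\omega,c}(\cdot-y)\e^{\i\theta}\}$, from the structure of the profile equation \eqref{sltn:eq}, and the first from the natural‑constraint method applied to the variational problem \eqref{d}. For the classification, any $\varphi\in\mathcal G_{\omega,c}$ is, by elliptic regularity applied to \eqref{sltn:eq}, smooth, and a standard comparison argument gives exponential decay of $\varphi$ and $\partial_x\varphi$ at $\pm\infty$. Multiplying \eqref{sltn:eq} by $\overline{\partial_x\varphi}$ and taking the real part (note that $\i c|\partial_x\varphi|^2$ and $\i|\varphi|^{2\sigma}|\partial_x\varphi|^2$ are purely imaginary) yields the pointwise identity $\frac{\mathrm{d}}{\mathrm{d}x}\big(|\partial_x\varphi|^2-\omega|\varphi|^2\big)=0$, hence, by decay, $|\partial_x\varphi|^2\equiv\omega|\varphi|^2$; a zero of $\varphi$ would therefore be a zero of $\partial_x\varphi$ as well, forcing $\varphi\equiv0$ by ODE uniqueness, so every nontrivial solution is nonvanishing. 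Writing then $\varphi=\rho\,\e^{\i\psi}$ with $\rho=|\varphi|>0$ and splitting \eqref{sltn:eq} into real and imaginary parts, the imaginary part integrates --- the constant being $0$ by decay --- to $\psi'=\tfrac c2-\tfrac{1}{2\sigma+2}\rho^{2\sigma}$, which is exactly the ``structure analysis'' reduction from the quasilinear profile equation to a semilinear one mentioned in the introduction, and substituting back into the real part leaves $\rho$ solving the autonomous scalar ODE
\begin{equation*}
-\partial_x^2\rho+\Big(\omega-\tfrac{c^2}{4}\Big)\rho+\tfrac c2\,\rho^{2\sigma+1}-\tfrac{2\sigma+1}{(2\sigma+2)^2}\,\rho^{4\sigma+1}=0 ,
\end{equation*}
whose unique positive $H^1$ solution up to translation is $\Phi_{\omega,c}$ from \eqref{phi}; uniqueness follows from the first‑integral/phase‑plane analysis of this conservative ODE, and the hypothesis $4\omega>c^2$ is precisely what makes the homoclinic orbit exist and $\Phi_{\omega,c}$ well‑defined and positive. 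Re‑assembling $\varphi=\rho\,\e^{\i\psi}$ and tracking the translation and the residual constant phase then gives $\mathcal G_{\omega,c}=\{Q_{\omega,c}(\cdot-y)\e^{\i\theta}:y\in\R,\ \theta\in[0,2\pi)\}$.

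For the variational identification, note that \eqref{sltn:eq} is exactly the critical‑point equation $S_{\omega,c}'(\varphi)=0$ and that $K_{\omega,c}(\varphi)=\langle S_{\omega,c}'(\varphi),\varphi\rangle$ by \eqref{K}; hence every $\varphi\in\mathcal G_{\omega,c}$ satisfies $K_{\omega,c}(\varphi)=0$ and so $S_{\omega,c}(\varphi)\geq d(\omega,c)$. The homogeneity in \eqref{K} gives $S_{\omega,c}(\lambda\varphi)=\lambda^2\alpha(\varphi)+\lambda^{2\sigma+2}\beta(\varphi)$ with $\alpha(\varphi)=\tfrac12\|\partial_x\varphi\|_{L^2}^2+\omega M(\varphi)+cP(\varphi)=\tfrac12\int\big((\xi-\tfrac c2)^2+\omega-\tfrac{c^2}{4}\big)|\widehat\varphi(\xi)|^2\,\mathrm{d}\xi$ and $\beta(\varphi)=\tfrac{1}{2(\sigma+1)}\Im\int|\varphi|^{2\sigma}\bar\varphi\,\partial_x\varphi$. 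Since $4\omega>c^2$ we have $\alpha(\varphi)\simeq\|\varphi\|_{H^1}^2$ for $\varphi\neq0$, so any nonzero $\varphi$ with $K_{\omega,c}(\varphi)=0$ satisfies $\beta(\varphi)=-\tfrac{1}{\sigma+1}\alpha(\varphi)<0$, $S_{\omega,c}(\varphi)=\tfrac{\sigma}{\sigma+1}\alpha(\varphi)\simeq\|\varphi\|_{H^1}^2$, and $\langle K_{\omega,c}'(\varphi),\varphi\rangle=4\sigma(\sigma+1)\beta(\varphi)<0$; in particular $d(\omega,c)>0$ and the constraint $K_{\omega,c}=0$ is non‑degenerate. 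If now $S_{\omega,c}(\varphi)=d(\omega,c)$ and $K_{\omega,c}(\varphi)=0$, then $\varphi$ minimizes \eqref{d}, so by the Lagrange multiplier theorem $S_{\omega,c}'(\varphi)=\mu K_{\omega,c}'(\varphi)$ for some $\mu\in\R$; pairing with $\varphi$ and using $K_{\omega,c}(\varphi)=0$ together with $\langle K_{\omega,c}'(\varphi),\varphi\rangle\neq0$ forces $\mu=0$, hence $\varphi\in\mathcal G_{\omega,c}$, which gives the inclusion $\{S_{\omega,c}=d(\omega,c),\ K_{\omega,c}=0\}\subseteq\mathcal G_{\omega,c}$. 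For the reverse inclusion I would take a minimizing sequence $(\varphi_n)$ for \eqref{d}: the identity $S_{\omega,c}(\varphi_n)\simeq\|\varphi_n\|_{H^1}^2$ gives $H^1$‑boundedness, and a concentration–compactness argument --- recentering by translations, excluding vanishing since $\beta(\varphi_n)$ stays bounded away from $0$, and excluding dichotomy by the strict scaling‑subadditivity of $d(\omega,c)$ that follows from the homogeneity of $\alpha$ and $\beta$ --- produces a minimizer, which by the same multiplier argument lies in $\mathcal G_{\omega,c}$. Hence $d(\omega,c)=S_{\omega,c}(Q_{\omega,c})$ by the classification step, and since $S_{\omega,c}$ is translation‑ and phase‑invariant it is constant $\equiv d(\omega,c)$ on $\mathcal G_{\omega,c}$; combined with $K_{\omega,c}\equiv0$ there, this gives $\mathcal G_{\omega,c}\subseteq\{S_{\omega,c}=d(\omega,c),\ K_{\omega,c}=0\}$ and completes the first equality.

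The main obstacle is the existence of the minimizer: since \eqref{gDNLS} has no Galilean invariance and the nonlinearity carries a derivative, the concentration–compactness argument must be handled carefully --- controlling $\beta$, a first‑order scaling‑subcritical quantity, under translations and weak $H^1$ limits, and establishing the strict subadditivity needed to exclude dichotomy. The other point requiring attention, though more routine, is the structure reduction in the first step: verifying that every finite‑energy solution of \eqref{sltn:eq} is nonvanishing and has phase derivative $\psi'=\tfrac c2-\tfrac{1}{2\sigma+2}\rho^{2\sigma}$, so that the profile equation collapses to the scalar ODE solved explicitly by \eqref{phi}. Everything else reduces to bookkeeping with the formulas \eqref{Q}--\eqref{phi} and the scaling identity above.
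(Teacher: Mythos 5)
The paper does not prove this lemma at all: it is imported verbatim from the cited reference \cite{FuHaIn:gDNLS:GWP}, so there is no in-paper argument to compare against. Judged on its own terms, your two-step strategy (ODE classification of $\mathcal G_{\omega,c}$, then identification of $\mathcal G_{\omega,c}$ with the Nehari-type minimizers) is exactly the route taken in that reference, and every step you actually carry out checks: the pointwise identity $\frac{\d}{\d x}\bigl(|\partial_x\varphi|^2-\omega|\varphi|^2\bigr)=0$ does follow from pairing \eqref{sltn:eq} with $\overline{\partial_x\varphi}$ and it cleanly yields nonvanishing (for $\sigma\geq\tfrac12$ the right-hand side of the ODE is locally Lipschitz, so the uniqueness step is legitimate in the paper's range $\sigma\in(1,2)$); the phase reduction $\psi'=\tfrac c2-\tfrac{1}{2\sigma+2}\rho^{2\sigma}$ and the resulting scalar equation for $\rho$ reproduce \eqref{Q}--\eqref{phi}; the scaling identities $K_{\omega,c}=2\alpha+(2\sigma+2)\beta$, $S_{\omega,c}=\tfrac{\sigma}{\sigma+1}\alpha$ on $\{K_{\omega,c}=0\}$, $\act{K_{\omega,c}'(\varphi)}{\varphi}=4\sigma(\sigma+1)\beta<0$, and the resulting vanishing of the Lagrange multiplier are all correct, as is the positivity $\alpha\simeq\norm{\varphi}_{H^1}^2$ under $4\omega>c^2$. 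The one place where you have an outline rather than a proof is the existence of a minimizer: the exclusion of vanishing via $|\beta(\varphi_n)|\geq\tfrac{1}{\sigma+1}\alpha(\varphi_n)\gtrsim d>0$ is fine, but the exclusion of dichotomy (strict subadditivity for this derivative, non-gauge-invariant nonlinearity, with $\beta$ a sign-indefinite first-order functional) is precisely the technical heart of \cite{FuHaIn:gDNLS:GWP} and is asserted rather than established here. You correctly identify it as the main obstacle, so the proposal is an accurate and honest reconstruction, but be aware that without that step the inclusion $\mathcal G_{\omega,c}\subseteq\{S_{\omega,c}=d,\ K_{\omega,c}=0\}$ --- i.e.\ the statement $S_{\omega,c}(Q_{\omega,c})=d(\omega,c)$ rather than merely $\geq$ --- remains unproved.
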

\begin{remark}
  \begin{enumerate}
    \item  By the Lagrange multiplier argument in \cite{FuHaIn:gDNLS:GWP}, we have
    $S_{\omega,c}'(Q_{\omega,c})=0,$ which implies that
    $ d'\sts{\omega,c} = \sts{M\sts{Q_{\omega,c}},P\sts{Q_{\omega,c}}}$
    and
    \begin{align}\label{hess}
      d''\sts{\omega,c}
      =
      \begin{pmatrix}
        \partial_{\omega}M\sts{Q_{\omega,c}} & \partial_{c}M\sts{Q_{\omega,c}} \\
        \partial_{\omega}P\sts{Q_{\omega,c}} & \partial_{c}P\sts{Q_{\omega,c}} \\
      \end{pmatrix}.
    \end{align}

    \item  By the explicit formula of the solitary waves, the following nondegenerate condition
        \begin{align*}
          \det\left[ d''\sts{\omega,c} \right]<0
        \end{align*}
       holds with $\sigma\in\sts{1,2}$ and
        $c\in\sts{ -2\sqrt{\omega} ,2z_0\sqrt{\omega}}$,  see Theorem 4.3 in \cite{LiuSS:gDNLS:Stab}. This nondegenerate condition is important to show the stability result of the solitary waves by the perturbation argument, the modulation stability and the energy method.
  \end{enumerate}
\end{remark}
\begin{proposition}[Coercivity property of the linearized operator]
  \label{prop:coercivity}
  Let $\sigma$, $z_0$ be as that in Theorem \ref{mainthm}, and $(\omega, c)\in\R^2$ with
  $c\in\sts{ -2\sqrt{\omega} ,2z_0\sqrt{\omega}}.$
  If $\eps\in H^1(\R)$ satisfies the orthogonality conditions
\begin{align}\label{spect:orth}
  \bilin{\eps}{\i Q_{\omega,c}}
  =
  \bilin{\eps}{\partial_x{Q}_{\omega,c}}
  =
  \bilin{\eps}{Q_{\omega,c}}
  =
  \bilin{\eps}{\i\partial_x{Q}_{\omega,c}}
  =
  0,
\end{align}
  then we have
  \[
  \biact{S_{\omega,c}''\sts{Q_{\omega,c}}}{\eps}{\eps}
  \geq C_{\mathrm{abs}} \norm{\eps}_{H^1}^2,
  \]
where
\begin{align}\label{quad}
 S_{\omega,c}''(Q_{\omega,c}) := T_{\omega,c} + N_{\omega,c}
\end{align}
with $\dual{T_{\omega,c}\eps}{\eps}
  :=
  \int
  \sts{
    \abs{\eps_x}^2+\omega~\abs{\eps}^2
   -c\Im\sts{ \bar{\eps}\eps_x } },$ and \begin{align*}
    \dual{N_{\omega,c}\eps}{\eps}
    :=
    &
      \Im\int\left[ \abs{Q_{\omega,c}}^{2\sigma}\bar{\eps}\eps_x
      +
      \sigma
         \abs{Q_{\omega,c}}^{2\sigma-2}
         \left(
            \bar{Q}_{\omega,c}\partial_x{Q}_{\omega,c}\abs{\eps}^2
            +
            Q_{\omega,c}\partial_x{Q}_{\omega,c}\bar{\eps}^2
         \right)
        \right].
\end{align*}
\end{proposition}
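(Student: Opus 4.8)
\emph{Plan and spectral preliminaries.} I would deduce the coercivity from the spectral structure of the self-adjoint operator attached to the form $\biact{S_{\omega,c}''(Q_{\omega,c})}{\cdot}{\cdot}$, using the variational characterization (Lemma~\ref{lem:variation}) and the non-degeneracy $\det d''(\omega,c)<0$, and then a weak-compactness argument in $H^1(\R)$. Write $Q=Q_{\omega,c}$, $P_\omega=\partial_\omega Q$, $P_c=\partial_c Q$, and let $\bilin{\cdot}{\cdot}$ denote the real $L^2(\R)$ pairing. Completing the square gives $\dual{T_{\omega,c}\eps}{\eps}=\normto{\eps_x-\tfrac{\i c}{2}\eps}^2+\sts{\omega-\tfrac{c^2}{4}}\normto{\eps}^2$, equivalent to $\norm{\eps}_{H^1}^2$ since $4\omega>c^2$; and since $Q,\partial_x Q$ decay exponentially, the map $\eps\mapsto\dual{N_{\omega,c}\eps}{\eps}$ is weakly continuous on $H^1(\R)$ with $\abs{\dual{N_{\omega,c}\eps}{\eps}}\leq\tfrac12\dual{T_{\omega,c}\eps}{\eps}+C\normto{\eps}^2$ by Cauchy--Schwarz and Young, so $S_{\omega,c}''(Q)$ is self-adjoint and bounded below with essential spectrum in $[\omega-c^2/4,\infty)$. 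Differentiating $S_{\omega,c}'(Q_{\omega,c})=0$ in $\omega$ and in $c$ yields $S_{\omega,c}''(Q)P_\omega=-Q$ and $S_{\omega,c}''(Q)P_c=-\i\partial_x Q$, while phase and translation invariance give $\spn\{\i Q,\partial_x Q\}\subset\ker S_{\omega,c}''(Q)$. From $\det d''(\omega,c)<0$ on the stated range (Theorem~4.3 of \cite{LiuSS:gDNLS:Stab}), Lemma~\ref{lem:variation}, and the theory of \cite{GrillSS:Stable:87,GSS:NLS:Stab2}, I record the input: $S_{\omega,c}''(Q)$ has exactly one negative eigenvalue, $\ker S_{\omega,c}''(Q)=\spn\{\i Q,\partial_x Q\}$ exactly, and the rest of its spectrum is bounded below by a positive constant; moreover the identities above show that $\biact{S_{\omega,c}''(Q)}{\cdot}{\cdot}$, in the basis $\{P_\omega,P_c\}$ of $\spn\{P_\omega,P_c\}$, has matrix $-d''(\omega,c)$, of determinant $\det d''(\omega,c)<0$ and hence signature $(1,1)$.

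\emph{Qualitative positivity.} Let $V$ be the codimension-four subspace cut out by \eqref{spect:orth}. For $\eps\in V$, self-adjointness turns the constraints $\bilin{\eps}{Q}=\bilin{\eps}{\i\partial_x Q}=0$ into $\dual{S_{\omega,c}''(Q)\eps}{P_\omega}=\dual{S_{\omega,c}''(Q)\eps}{P_c}=0$. Suppose $\biact{S_{\omega,c}''(Q)}{\eps}{\eps}<0$. Then $\eps\notin\spn\{P_\omega,P_c\}$ (otherwise these two identities and the invertibility of $d''$ force $\eps=0$), so on the three-dimensional space $\spn\{P_\omega,P_c,\eps\}$ the form $\biact{S_{\omega,c}''(Q)}{\cdot}{\cdot}$ is block diagonal, with blocks $-d''(\omega,c)$ and the negative number $\biact{S_{\omega,c}''(Q)}{\eps}{\eps}$; its inertia is therefore $(1,2)$, exhibiting a two-dimensional subspace on which $S_{\omega,c}''(Q)$ is negative definite. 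This contradicts that $S_{\omega,c}''(Q)$ has a single negative eigenvalue. Hence $\biact{S_{\omega,c}''(Q)}{\eps}{\eps}\geq0$ for all $\eps\in V$.

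\emph{From positivity to coercivity.} If the estimate failed, pick $\eps_n\in V$ with $\norm{\eps_n}_{H^1}=1$ and $\biact{S_{\omega,c}''(Q)}{\eps_n}{\eps_n}\to0$ (the infimum over the $H^1$-unit sphere of $V$ is $\geq0$ by the previous step and is $0$ in the failure scenario). After extracting a subsequence $\eps_n\rightharpoonup\eps_\ast$ in $H^1(\R)$, strongly in $L^2_{loc}$, weak continuity of $\dual{N_{\omega,c}\cdot}{\cdot}$ and weak lower semicontinuity of $\dual{T_{\omega,c}\cdot}{\cdot}$ give $\biact{S_{\omega,c}''(Q)}{\eps_\ast}{\eps_\ast}\leq0$, hence $=0$ by the previous step since $\eps_\ast\in V$. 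If $\eps_\ast=0$ then $\dual{N_{\omega,c}\eps_n}{\eps_n}\to0$, so $\dual{T_{\omega,c}\eps_n}{\eps_n}\to0$, contradicting its equivalence with $\norm{\eps_n}_{H^1}^2=1$; thus $\eps_\ast\neq0$ minimizes the non-negative form $\biact{S_{\omega,c}''(Q)}{\cdot}{\cdot}$ over $V$. The Euler--Lagrange relation then gives $S_{\omega,c}''(Q)\eps_\ast\in\spn\{\i Q,\partial_x Q,Q,\i\partial_x Q\}$; substituting $Q=-S_{\omega,c}''(Q)P_\omega$, $\i\partial_x Q=-S_{\omega,c}''(Q)P_c$ and using $\i Q,\partial_x Q\in\ker S_{\omega,c}''(Q)=\bigl(\mathrm{Ran}\,S_{\omega,c}''(Q)\bigr)^\perp$ yields $\eps_\ast=aP_\omega+bP_c+c'\,\i Q+d'\,\partial_x Q$ for scalars $a,b,c',d'$. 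Imposing \eqref{spect:orth} decouples: the pairings against $Q$ and $\i\partial_x Q$ give a $2\times2$ system for $(a,b)$ with determinant a nonzero multiple of $\det d''(\omega,c)$, so $a=b=0$; the pairings against $\i Q$ and $\partial_x Q$ give a $2\times2$ system for $(c',d')$ with determinant $2M(Q)\normto{\partial_x Q}^2-4\sts{P(Q)}^2$, nonzero by Appendix~A (the non-degenerate condition \eqref{nondeg:determ}), so $c'=d'=0$. Thus $\eps_\ast=0$, a contradiction, and $\biact{S_{\omega,c}''(Q)}{\eps}{\eps}\geq C_{\mathrm{abs}}\norm{\eps}_{H^1}^2$ on $V$.

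\emph{Main obstacle.} The substantive ingredient is the spectral input of the first step --- that $S_{\omega,c}''(Q)$ has \emph{exactly} one negative eigenvalue and a kernel equal to \emph{exactly} $\spn\{\i Q,\partial_x Q\}$ --- together with the sign bookkeeping that identifies $\biact{S_{\omega,c}''(Q)}{\cdot}{\cdot}$ on $\spn\{P_\omega,P_c\}$ with $-d''(\omega,c)$; once these are in place the remaining steps form a routine variational/compactness argument, relying additionally on the Appendix~A non-degeneracy only to exclude a nontrivial limit $\eps_\ast$.
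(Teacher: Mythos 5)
Your proof is correct and rests on the same three pillars as the paper's: the spectral facts (essential spectrum in $\left[\omega-\tfrac{c^2}{4},\infty\right)$, exactly one negative eigenvalue, kernel equal to $\spn\{\i Q,\partial_x Q\}$), the identities $S''_{\omega,c}(Q)\partial_\omega Q=-Q$ and $S''_{\omega,c}(Q)\partial_c Q=-\i\partial_x Q$ combined with $\det d''(\omega,c)<0$ (note the paper misprints the second identity as $-\i Q$, but its formulas \eqref{deg1}--\eqref{deg2} use the version you wrote), and the Appendix~A determinant $2M(Q)\normto{\partial_xQ}^2-4[P(Q)]^2\neq 0$. The differences are worth recording. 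First, you import the count of negative eigenvalues and the exact kernel from \cite{LiuSS:gDNLS:Stab} and the \cite{GrillSS:Stable:87,GSS:NLS:Stab2} framework; the paper also cites \cite{LiuSS:gDNLS:Stab} for the kernel, but it derives $n\bigl(S''_{\omega,c}(Q)\bigr)=1$ internally from the Nehari-manifold characterization of Lemma \ref{lem:variation} (its Steps 2--4: nonnegativity of the form on $\{\langle K'_{\omega,c}(Q),\cdot\rangle=0\}$ via the implicit function theorem, a negative direction from $K_{\omega,c}(Q)=0$, and a two-negative-directions contradiction). Your shortcut is legitimate --- these are precisely the spectral hypotheses \cite{LiuSS:gDNLS:Stab} verifies in order to apply the stability criterion --- but it trades an internal variational argument for an external spectral computation. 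Second, your qualitative-positivity step is a Sylvester-inertia argument on $\spn\{\partial_\omega Q,\partial_c Q,\eps\}$, whose Gram matrix is block diagonal with blocks $-d''(\omega,c)$ (signature $(1,1)$) and the assumed negative number, forcing two negative eigenvalues; the paper's Lemma \ref{cla:posit} instead selects one negative combination $U$ of $\partial_\omega Q,\partial_c Q$, decomposes along the negative/kernel/positive subspaces, and closes with Cauchy--Schwarz. The two arguments are equivalent in content; yours is cleaner and avoids the explicit decomposition \eqref{s6:6}--\eqref{s6:5}. Third, you actually carry out the passage from strict positivity to $H^1$-coercivity (weak-compactness minimizing sequence, weak continuity of $\dual{N_{\omega,c}\cdot}{\cdot}$, the Euler--Lagrange relation, and the two $2\times 2$ systems whose determinants are a nonzero multiple of $\det d''(\omega,c)$ and of $2M(Q)\normto{\partial_xQ}^2-4[P(Q)]^2$), whereas the paper delegates this last step to \cite{LeWu:DNLS,MiaoTX:DNLS:Stab}; your account has the merit of making explicit where the Appendix~A non-degeneracy actually enters.
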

\begin{proof} We follow the argument in \cite{LeWu:DNLS,MiaoTX:DNLS:Stab} (see also \cite{MartelMT:Stab:NLS,Wein:stab:SJMA, Wein:stab:CPAM}) and the references therein, and divide the proof into several steps.
\begin{enumerate}[label=\emph{{Step \arabic*.}},ref=\emph{{Step \arabic*}}]
\item\label{step:spectral}\textit{Spectral distribution of $ S_{\omega,c}''(Q_{\omega,c}) $.}
On one hand, by H\"{o}lder's inequality, we have
\begin{align*}
  \dual{T_{\omega,c}\eps}{\eps}
  =
  &
  \Re\int
    \ntn{
        \abs{\partial_x\eps}^2+\i c \partial_x\eps\bar{\eps}+\omega \abs{\eps}^2
    }
  \\
  =
  &
  \Re\int
    \ntn{
        \abs{\partial_x\eps}^2+\i c~\partial_x\eps\bar{\eps}+\frac{c^2}{4}~\abs{\eps}^2
    }
   +
   \sts{\omega-\frac{c^2}{4}}\int\abs{\eps}^2
     \\
  \geq
  &
   \sts{\omega-\frac{c^2}{4}}\int\abs{\eps}^2,
\end{align*}
which means that ${\bm\sigma}_{\ess}\sts{T_{\omega,c}}\subset\left[\omega-\frac{c^2}{4}~,~\infty\right).$
On the other hand, by the exponential decay of $ Q_{\omega,c} $ and the similar argument of Proposition 2.9 in \cite{Wein:stab:SJMA}, we know that the operator $N_{\omega,c}$ is relatively compact with respect to $T_{\omega,c}$. By Weyl's theorem in \cite{ReedSimon:book:IV}, we have
\begin{align}\label{ess-spectral}
  {\bm\sigma}_{\ess}\sts{ S_{\omega,c}''(Q_{\omega,c}) } =  {\bm\sigma}_{\ess}\sts{T_{\omega,c}} \subset\left[\omega-\frac{c^2}{4}~,~\infty\right).
\end{align}

\item\label{step:w-mp}
\textit{
    We claim that for any $\varphi\in H^1\sts{\R}\setminus\ltl{0}$ with
    $\act{ K'_{\omega,c}\sts{Q_{\omega,c}} }{\varphi}=0,$
      \begin{align}\label{coer-mp}
        \biact{ S''_{\omega,c}\sts{Q_{\omega,c}} }{ \varphi }{ \varphi }\geq0.
      \end{align}
    }
In deed,  notice that $ K'_{\omega,c}\sts{Q_{\omega,c}}\neq 0 $, we can choose $\psi$
   such that $\act{ K'_{\omega,c}\sts{Q_{\omega,c}} }{ \psi }\neq 0$. We now define
   for any $ \varphi \in H^1\sts{\R}\setminus\ltl{0} $ with $ \act{ K'_{\omega,c}\sts{Q_{\omega,c} } }{\varphi}=0 $,~
\begin{align*}
 \kappa\sts{m,s}:=K_{\omega,c}\sts{Q_{\omega,c} +m \psi+s\varphi}.
\end{align*}
   Applying the Implicit Function Theorem to $\kappa\sts{m,s}$ with
   $
     \kappa\sts{0,0}= K_{\omega,c}\sts{Q_{\omega,c}}
   $
   and
   $
     \left.
        \partial_{m}\kappa\sts{m,s}
     \right|_{ \sts{m,s}=\sts{0,0} }
     \neq 0
   $
    yields that there exists $\delta>0$ such that
    $m:\sts{-\delta,\delta}\mapsto\R$ is of class $\mathcal{C}^1$ with $m\sts{0}=0$,
    and
   \begin{align}
   \label{s1}
      \kappa\sts{m\sts{s}, s}=K_{\omega,c}\sts{Q_{\omega,c} +m\sts{s}\psi+s\varphi}\equiv0\text{~~for~~}s\in\sts{-\delta,\delta}.
   \end{align}
Differentiating on $s$, we have
$$
    \dot{m}\sts{0}\partial_{m}\kappa\sts{0,0} + \partial_s\kappa\sts{0,0}=0,
$$
   which implies that
$
    \dual{ K'_{\omega,c}\sts{Q_{\omega,c}} }{ \varphi }
        +
    \dot{m}\sts{0}\dual{ K'_{\omega,c}\sts{Q_{\omega,c}} }{ \psi }=0.
$
Consequently we have
$
    \dot{m}\sts{0}=0.
$

Based on the above argument, we can define the function $\iota: \sts{-\delta,\delta} \mapsto\R $
as following:
\begin{align*}
    \iota\sts{s}:=S_{\omega,c}\sts{ Q_{\omega,c}+m \sts{s}\psi+s\varphi }.
\end{align*}
It means from Lemma \ref{lem:variation} and \eqref{s1} that
$0$ is a local minimum point of $\iota,$
and implies that the function $\iota$ is convex around $0$,
i.e.
  $ \iota''\sts{0}=\biact{ S''_{\omega,c}\sts{ Q_{\omega,c} } }{ \varphi }{ \varphi }\geq0.  $

\item\label{step:negtfuncs}
\textit{
$ S_{\omega,c}''(Q_{\omega,c}) $ has at least one negative eigenfunction.}
For this purpose, we only need to show that there exists a function $U$ in $H^1$ with
$\biact{ S_{\omega,c}''(Q_{\omega,c}) }{U}{U}<0.$ Indeed, it follows from
$ K_{\omega,c}(Q_{\omega,c})=0 $ and $4\omega>c^2$ that
\begin{align}
\label{S-negative}
    \biact{ S''_{\omega,c}(Q_{\omega,c}) }{ Q_{\omega,c} }{ Q_{\omega,c} }
    =
   &
    -
    2\sigma
    \Re\int
    \ltl{ \abs{\partial_x{Q_{\omega,c}}}^2 + \omega\abs{Q_{\omega,c}}
    + \i c~\partial_x{Q_{\omega,c}} \bar{Q}_{\omega,c}
    }
    <0.
\end{align}

\item\label{step:uniqnegtfunc}\textit{
$ S_{\omega,c}''(Q_{\omega,c}) $ has at most one-dimensional negative eigenspace.}
We argue by contradiction. Suppose that there exist two linearly independent eigenfunctions
$\chi_1$ and~$\chi_2$ of $ S_{\omega,c}''(Q_{\omega,c}).$ Since $ S_{\omega,c}''(Q_{\omega,c}) $ is a self-adjoint operator, without of generality, one may assume that
$ \inprod{\chi_1}{\chi_2}=0.$ It is easy to check that
$$ \biact{  S_{\omega,c}''(Q_{\omega,c}) }{ \chi_1 }{ \chi_2 }=0.$$
Moreover by the nonnegative property in \ref{step:w-mp} and $ \biact{  S_{\omega,c}''(Q_{\omega,c}) }{ \chi_1 }{ \chi_1 }<0 $
and
$ \biact{  S_{\omega,c}''(Q_{\omega,c}) }{ \chi_2 }{ \chi_2 }<0 $
, we have
$$
  \act{ K'_{\omega,c}\sts{Q_{\omega,c}} }{\chi_1}\neq0
, \;\text{and}\;
    \act{ K'_{\omega,c}\sts{Q_{\omega,c}} }{\chi_2}\neq0,
$$
which implies that there exists $\xi_0\in\R\setminus\ltl{0}$ with $ \chi_0=\chi_1+\xi_0\chi_2 $ such that
\begin{align}\label{s3}
  \act{ K'_{\omega,c}\sts{Q_{\omega,c}} }{ \chi_0 }=0.
\end{align}
By the nonnegative property in \ref{step:w-mp}, we have
$
  \biact{  S_{\omega,c}''(Q_{\omega,c}) }{ \chi_0 }{ \chi_0 }\geq0,
$
which is in contradiction with
\begin{align*}
  \biact{  S_{\omega,c}''(Q_{\omega,c}) }{ \chi_0 }{ \chi_0 }
  =
  &
    \biact{  S_{\omega,c}''(Q_{\omega,c}) }{ \chi_1 }{ \chi_1 }
    +
    \xi_0^2\biact{  S_{\omega,c}''(Q_{\omega,c}) }{ \chi_2 }{ \chi_2 }
    <0.
\end{align*}\

\item\label{step:ker}
    $ \ker\sts{ S_{\omega,c}''(Q_{\omega,c}) }=\spn\{ \i Q_{\omega,c}, \partial_x{ Q_{\omega,c} }   \}. $
It follows from Proposition 3.6 in
\cite{LiuSS:gDNLS:Stab}.

\item\label{step:posit}
\textit{Positivity of the quadratic form $ \dual{S_{\omega,c}''(Q_{\omega,c})\eps}{\eps}.$}
In fact, we have
\begin{lemma}
\label{cla:posit}
  For any $\eps\in H^1\sts{\R}\setminus\ltl{0}$ with \eqref{spect:orth}
we have
$
        \biact{S_{\omega,c}''\sts{Q_{\omega,c}}}{\eps}{\eps}>0.
$
\end{lemma}
\begin{proof} This is a consequence of
\ref{step:spectral}--\ref{step:ker} and the standard spectral decomposition arguments for the quadratic form
$\biact{S_{\omega,c}''\sts{Q_{\omega,c}}}{\eps}{\eps}.$
In this proof, we will ignore the subscript $\omega $ and $c$ for convenience and write $S_{\omega,c}''\sts{Q_{\omega,c}}$ and $Q_{\omega,c}$
as $S''\sts{Q}$ and $Q$ respectively.

First, we infer, from \ref{step:negtfuncs}--\ref{step:ker} together with \eqref{ess-spectral}, that
the space $H^1$ can be decomposed as a direct sum of three subspaces:
\begin{align}\label{space:decom}
  H^1={\rm N}\bigoplus{\rm K}\bigoplus{\rm P}~,
\end{align}
with
$
  {\rm K}:=\spn\{ \i Q~,~\partial_x{ Q }  \}
$,
$
  {\rm P}:=\set{\eps}{ \biact{S''\sts{Q}}{\eps}{\eps}>0 }
$
and
$
  {\rm N}:=\spn\ltl{ \chi },
$
where $\chi$ is the $L^2$-normalized negative eigenfunction
corresponding to the negative eigenvalue $ -\lambda^2$.
According to \eqref{space:decom}, we can decompose any function $\eps\in H^1$
satisfying \eqref{spect:orth} into
\begin{align}\label{s6:6}
  \eps= \kappa\chi + \p
\end{align}
with $\p\in{\rm P}$ and $\kappa = \inprod{\eps}{\chi}.$

Now, we turn to the decomposition of some special functions related to the non-degenerate condition
$\det \ntn{ d''\sts{\omega,c} } <0$.
On one hand, by \eqref{sltn:eq}, we have
$$
  S''\sts{Q} \partial_\omega{Q} = - Q, \;\text{and} \;
  S''\sts{Q} \partial_c{Q} = -\i Q,
$$
which implies that
\begin{align}
\label{deg1}
&
  \biact{ S''\sts{Q} }{ \partial_\omega{Q} }{ \partial_\omega{Q} }
  =
  -\partial_\omega M\sts{Q},
&
  \biact{ S''\sts{Q} }{ \partial_\omega{Q} }{ \partial_c{Q} }
  =
  -\partial_c M\sts{Q},
  \\
\label{deg2}
&  \biact{ S''\sts{Q} }{ \partial_c{Q} }{ \partial_\omega{Q} }
  =
  -\partial_\omega P\sts{Q},
&
  \biact{ S''\sts{Q} }{ \partial_c{Q} }{ \partial_c{Q} }
  =
  -\partial_c P\sts{Q},
\end{align}
and
\begin{align}\label{s6:2}
  \biact{ S''\sts{Q} }{ \partial_\omega{Q} }{ \eps } = \biact{ S''\sts{Q} }{ \partial_c{Q} }{ \eps }=0.
\end{align}
On the other hand, the non-degenerate condition $\det\ntn{  d''\sts{\omega,c} }<0$ implies that
there exists ${\bm\xi}=(\xi_1, \xi_2)\in\R^2$ such that $ \biact{ d''\sts{\omega,c} }{\bm\xi}{\bm\xi}<0, $
which together with \eqref{hess}, \eqref{deg1}-\eqref{deg2} and setting
$ U = \xi_1\partial_\omega{Q} + \xi_2\partial_c{Q} $ yields that
\begin{align}
\label{s6:1}
    \biact{ S''\sts{Q} }
    { U }
    { U }
  <0.
\end{align}
Using the decomposition \eqref{space:decom}, we decompose the function $U$ as following:
\begin{align}
\label{s6:5}
  U :=\alpha \chi + \zeta + {\mathtt y},
\end{align}
with $\alpha = \inprod{U}{\chi},$ $\zeta\in \ker\sts{ S''(Q) } $ and ${\mathtt y }\in {\rm P}.$
From \eqref{s6:6}, \eqref{s6:2}-\eqref{s6:5}, we have
\begin{align}
\label{s6:3}
  0>\biact{ S''\sts{Q} }{ U }{ U }
  &
  =
  -\alpha^2\lambda^2 + \biact{ S''\sts{Q} }{ {\mathtt y} }{ {\mathtt y} },
\\
\label{s6:4}
  0=\biact{ S''\sts{Q} }{ U }{ \eps }
  &
  =
  -\alpha\kappa\lambda^2 + \biact{ S''\sts{Q} }{ {\mathtt y} }{ \p }.
\end{align}
Now inserting \eqref{s6:6} into
$\biact{S''\sts{Q}}{\eps}{\eps}$, and taking into account \eqref{s6:3}-\eqref{s6:4},
we obtain by the Cauchy--Schwarz inequality that
\begin{align*}
  \biact{S''\sts{Q}}{\eps}{\eps}
  =
    -\kappa^2\lambda^2
    +
    \biact{S''\sts{Q}}{ \p }{ \p }
  \geq
  -\kappa^2\lambda^2
  +
  \frac
    { \biact{S''\sts{Q}}{ {\mathtt y} }{ \p }^2}
    { \biact{S''\sts{Q}}{ {\mathtt y} }{ {\mathtt y} } }
  >
  -\kappa^2\lambda^2
  +
  \frac
    { \alpha^2\kappa^2\lambda^4 }
    { \alpha^2\lambda^2 }
  =
  0.
\end{align*}
This completes the proof.
\end{proof}
\end{enumerate}

By \ref{step:spectral} to \ref{step:posit},  the coercivity property of $ \dual{S_{\omega,c}''(Q_{\omega,c})\eps}{\eps}$ can be obtained by the argument in
  \cite{LeWu:DNLS} and \cite{MiaoTX:DNLS:Stab} (see also \cite{MartelMT:Stab:NLS,Wein:stab:SJMA, Wein:stab:CPAM}). This concludes the proof of the proposition.
  \end{proof}

\section{Modulation Analysis}\label{sect:mod}
Following the modulation analysis in \cite{LeWu:DNLS} \cite{MiaoTX:DNLS:Stab} (also \cite{MartelMT:Stab:gKdV, MartelMT:Stab:NLS, Wein:stab:SJMA, Wein:stab:CPAM}), we will show the geometrical decomposition of the solutions to \eqref{gDNLS} close to the sum of two solitary waves with weak interactions. Now let $(\sigma, z_0)$ be as in Theorem \ref{mainthm}, $(\omega_{j}^{0},
c_{j}^{0})\in \R^2$ be such that $-2 \sqrt{\omega_{j}^{0}} <c_{j}^{0}
<2z_0\sqrt{\omega_{j}^{0}}$, $j=1, 2$, then by Theorem 4.3 in \cite{LiuPS:DNLS:ISM}, we have the non-degenerate condition
        \begin{align}\label{nondeg}
          \det\ntn{ d''\sts{\omega^0_j,c^0_j} }  <0, \;\text{for}\; j=1, 2.
        \end{align}
Let $\alpha < \alpha_0$ be small
enough, and $L>L_0$ be large enough, where $\alpha_0$, $L_0$ will be
determined later. We first consider the tube of size $\alpha$ in the energy space $H^1(\R)$
\begin{align*}
  \U{\alpha}{\bm{\omega}^0}{{\bf c}^0}{L}
  :=
  \set{u\in H^1\sts{\R}\setminus\ltl{0} }{
    \inf_{\substack{x_2-x_1>L,\\ \gamma_1,\gamma_2\in\R} }
    \norm{ u-\sum_{j=1}^{2} Q_{\omega_j^0,c_j^0}\sts{\cdot-x_j}\e^{\i\gamma_j} }_{H^1}<\alpha }
\end{align*}
with
$ {\bm\omega}^0=\sts{~\omega_1^0~,~\omega_2^0~} $ and $ {\mathbf{c}}^0=\sts{~c_1^0~,~c_2^0~}.$
We denote
$Q_j^0=Q_{\omega_j^0,c_j^0},~Q_j=Q_{\omega_j,c_j}$ for convenience,
and let
$\bm{\omega}$, $\mathbf{c}$, $\mathbf{x}$ and $\bm{\gamma}$
be the vectors $\sts{\omega_1, \omega_2},$
$\sts{c_1, c_2},$ $\sts{x_1, x_2}$
and $\sts{\gamma_1, \gamma_2}$ respectively.

By the Implicit Function Theorem, we have

\begin{lemma}[Static version]\label{lem:mod1}There exist $ L_{\ift} $ large enough, $\alpha_{\ift}$ small enough,
such that for any $ L>L_{\ift} ,$~$\alpha<\alpha_{\ift}$, if $u\in \U{\alpha}{\bm{\omega}^0}{{\bf c}^0}{L},$
then there exist unique $\mathcal{C}^1$ functions $ \bm{\omega}, \mathbf{c}, \mathbf{x}, \bm{\gamma}$ such that the following decomposition holds:
\begin{align}\label{funct:decom}
  u\sts{x}=\sum_{j=1}^{2} Q_j\sts{x-x_j}\e^{\i\gamma_j} + \eps\sts{x },
\end{align}
with $-2 \sqrt{\omega_{j}} <c_{j}
<2z_0\sqrt{\omega_{j}}$, $j=1,2$ and
\begin{align}
\label{orth:stat}
  \inprod{\eps}{ R_j }=\inprod{\eps}{\i\partial_x{R_j} }=\inprod{\eps}{\i R_j}=\inprod{ \eps }{ \partial_x{R_j} }=0,
  \quad j=1,2,
\end{align}
where
$
  R_j\sts{x}=Q_j\sts{x-x_j}\e^{\i\gamma_j}.
$
Moreover, we have
\begin{gather}
\label{est:ift}
  \norm{\eps}_{H^1}+\sum^{2}_{j=1}\left(\abs{ \omega_j-\omega_j^0 } + \abs{ c_j-c_j^0 }\right)
  <C_{\ift}\alpha,
  \quad j=1,2,\\
   x_2-x_1 >\frac{L}{2}.
\end{gather}
and
\begin{align}
\label{tech:nondeg}
    \frac{1}{2}<\frac{ \sqrt{ 4\omega_j-\sts{c_j}^2 } }{ \sqrt{ 4\omega_j^0-\sts{c_j^0}^2 } }<2,\quad j=1,2,
\end{align}

\end{lemma}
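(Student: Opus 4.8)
The plan is to obtain the decomposition \eqref{funct:decom}--\eqref{orth:stat} from the Implicit Function Theorem applied in $H^1\sts{\R}\times\R^8$, the eight unknowns being $\sts{\bm\omega,\mathbf{c},\mathbf{x},\bm\gamma}$ and the eight scalar equations being exactly the orthogonality relations in \eqref{orth:stat}; the separation $x_2-x_1>L$ will be used to show that the relevant Jacobian is block diagonal up to an exponentially small error, so that the problem essentially decouples into two single-bump ones. Concretely, given $u\in\U{\alpha}{\bm\omega^0}{\mathbf{c}^0}{L}$, I first pick a witness $\sts{\tilde x_1,\tilde x_2,\tilde\gamma_1,\tilde\gamma_2}$ with $\tilde x_2-\tilde x_1>L$ and $\norm{u-v_0}_{H^1}<\alpha$, where $v_0:=\sum_{j=1}^{2}Q_j^0\sts{\cdot-\tilde x_j}\e^{\i\tilde\gamma_j}$. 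Writing $R_j=Q_{\omega_j,c_j}\sts{\cdot-x_j}\e^{\i\gamma_j}$, I define $G\colon H^1\sts{\R}\times\R^8\to\R^8$ whose $k$-th block of four components ($k=1,2$) is
\begin{align*}
\Big(\ \inprod{v-R_1-R_2}{R_k}\,,\ \inprod{v-R_1-R_2}{\i\partial_x R_k}\,,\ \inprod{v-R_1-R_2}{\i R_k}\,,\ \inprod{v-R_1-R_2}{\partial_x R_k}\ \Big).
\end{align*}
Since $\sts{\omega,c}\mapsto Q_{\omega,c}$ is smooth on $\ltl{4\omega>c^2}$ and $G$ is affine in $v$, the map $G$ is of class $\mathcal{C}^1$, and $G=0$ at the base point $\sts{v_0;\bm\omega^0,\mathbf{c}^0,\tilde{\mathbf{x}},\tilde{\bm\gamma}}$ because there $v_0-R_1-R_2=0$.

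The core of the argument is to show that the partial Jacobian $D_{(\bm\omega,\mathbf{c},\mathbf{x},\bm\gamma)}G$ at this base point is invertible, with a bound independent of the witness. Using $\partial_{\gamma_j}R_j=\i R_j$, $\partial_{x_j}R_j=-\sts{\partial_x Q_j^0}\sts{\cdot-\tilde x_j}\e^{\i\tilde\gamma_j}$, $\partial_{\omega_j}R_j=\sts{\partial_\omega Q_j^0}\sts{\cdot-\tilde x_j}\e^{\i\tilde\gamma_j}$ and $\partial_{c_j}R_j=\sts{\partial_c Q_j^0}\sts{\cdot-\tilde x_j}\e^{\i\tilde\gamma_j}$, the identity $v_0-R_1-R_2=0$ at the base point, and the exponential decay of the profiles and their derivatives, every entry coupling parameters of one bump to test directions of the other is $\bigo{\e^{-\theta_0 L}}$; hence the Jacobian equals $\mathrm{diag}\sts{J_1,J_2}+\bigo{\e^{-\theta_0 L}}$, where $J_j$ is the single-bump $4\times4$ matrix. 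Ordering the parameters of the $j$-th bump as $\sts{\omega_j,c_j,\gamma_j,x_j}$ and its test directions as $\sts{R_j,\i\partial_x R_j,\i R_j,\partial_x R_j}$: the chain rule applied to $M\sts{Q_{\omega,c}}$ and to the identity $P\sts{Q_{\omega,c}}=\tfrac12\inprod{\i\partial_x Q_{\omega,c}}{Q_{\omega,c}}$ (using the symmetry of the bilinear form $\inprod{\i\partial_x\,\cdot}{\cdot}$, cf. the remark after Lemma \ref{lem:variation}) identifies the upper-left $2\times2$ block of $J_j$ with $-d''\sts{\omega_j^0,c_j^0}$ up to transpose; the parity identities $\inprod{\i Q_j^0}{Q_j^0}=\inprod{\i Q_j^0}{\i\partial_x Q_j^0}=\inprod{\partial_x Q_j^0}{Q_j^0}=\inprod{\partial_x Q_j^0}{\i\partial_x Q_j^0}=0$ make the lower-left $2\times2$ block vanish; and the lower-right $2\times2$ block has rows $\sts{-2M\sts{Q_j^0},\,2P\sts{Q_j^0}}$ and $\sts{-2P\sts{Q_j^0},\,\norm{\partial_x Q_j^0}_{L^2}^2}$. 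Therefore
\begin{align*}
\abs{\det J_j}=\abs{\det\ntn{d''\sts{\omega_j^0,c_j^0}}}\cdot\abs{\,2M\sts{Q_j^0}\norm{\partial_x Q_j^0}_{L^2}^2-4\ntn{P\sts{Q_j^0}}^2\,},
\end{align*}
which is strictly positive: the first factor by the nondegeneracy \eqref{nondeg}, and the second by the computation carried out in Appendix A. Consequently, choosing $L>L_{\ift}$ large, the full $8\times8$ Jacobian is invertible with inverse bounded uniformly in $\sts{\tilde{\mathbf{x}},\tilde{\bm\gamma}}$ (the scalar products are translation- and phase-invariant) and locally uniformly in $\sts{\bm\omega^0,\mathbf{c}^0}$.

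With $G\in\mathcal{C}^1$, $G=0$ at the base point, and the parameter-Jacobian uniformly invertible, the Implicit Function Theorem furnishes a radius -- uniform by the preceding -- below which, for every $u$ in the tube, there is a unique $\mathcal{C}^1$ map $u\mapsto\sts{\bm\omega,\mathbf{c},\mathbf{x},\bm\gamma}$ near $\sts{\bm\omega^0,\mathbf{c}^0,\tilde{\mathbf{x}},\tilde{\bm\gamma}}$ with $G\sts{u;\cdot}=0$; this is exactly \eqref{funct:decom}--\eqref{orth:stat} with $\eps:=u-R_1-R_2$. The IFT's Lipschitz bound then gives $\abs{\sts{\bm\omega,\mathbf{c},\mathbf{x},\bm\gamma}-\sts{\bm\omega^0,\mathbf{c}^0,\tilde{\mathbf{x}},\tilde{\bm\gamma}}}\leq C\norm{G\sts{u;\bm\omega^0,\mathbf{c}^0,\tilde{\mathbf{x}},\tilde{\bm\gamma}}}\leq C\norm{u-v_0}_{H^1}<C\alpha$, hence $\sum_j\sts{\abs{\omega_j-\omega_j^0}+\abs{c_j-c_j^0}}\leq C\alpha$ and $\norm{\eps}_{H^1}\leq\norm{u-v_0}_{H^1}+\norm{v_0-R_1-R_2}_{H^1}\leq C\alpha$, which is \eqref{est:ift}. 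The remaining claims follow by continuity from $\abs{\omega_j-\omega_j^0}+\abs{c_j-c_j^0}\leq C\alpha$: the strict inequalities $-2\sqrt{\omega_j^0}<c_j^0<2z_0\sqrt{\omega_j^0}$ and $4\omega_j^0-\sts{c_j^0}^2>0$ persist for $\sts{\omega_j,c_j}$ when $\alpha_{\ift}$ is small, so do $-2\sqrt{\omega_j}<c_j<2z_0\sqrt{\omega_j}$ and the two-sided bound \eqref{tech:nondeg}; and $x_2-x_1\geq\tilde x_2-\tilde x_1-\abs{x_1-\tilde x_1}-\abs{x_2-\tilde x_2}>L-C\alpha>\tfrac{L}{2}$ once $L_{\ift}$ is taken large relative to $C\alpha_{\ift}$.

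The main obstacle is the Jacobian computation of the second step: verifying that the gauge and parity identities satisfied by the complex-valued profile $Q_{\omega,c}$ indeed force $J_j$ into block-triangular form and reduce $\det J_j$ to the product $\det\ntn{d''}\cdot\bigl(2M\norm{\partial_x Q}_{L^2}^2-4P^2\bigr)$, after which non-degeneracy is simply read off from \eqref{nondeg} and Appendix A. A secondary point, needed to upgrade the pointwise Implicit Function Theorem to a statement valid throughout the tube $\U{\alpha}{\bm\omega^0}{\mathbf{c}^0}{L}$, is the uniformity of the IFT radius and Lipschitz constant, which relies precisely on the exponential decoupling of the two bumps and on the translation--phase invariance of the scalar products.
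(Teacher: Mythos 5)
Your proposal is correct and follows essentially the same route as the paper: the orthogonality conditions are imposed via the Implicit Function Theorem, the $8\times 8$ Jacobian decouples into two single-bump blocks up to $\bigo{\e^{-\theta L}}$ errors thanks to the separation $x_2-x_1>L$, and each block determinant reduces to $\det\ntn{d''\sts{\omega_k^0,c_k^0}}\cdot\bigl(2M\sts{Q_k^0}\norm{\partial_x Q_k^0}_{L^2}^2-4\ntn{P\sts{Q_k^0}}^2\bigr)$, which is nonzero by the nondegeneracy condition and the Appendix A computation. The remaining estimates \eqref{est:ift}, \eqref{tech:nondeg} and the lower bound on $x_2-x_1$ are obtained exactly as in the paper, from the Lipschitz bound of the IFT and continuity of the constraints in $\sts{\omega_j,c_j}$.
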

\begin{proof}First of all, by the definition of  $ \U{\alpha}{\bm{\omega}^0}{{\bf c}^0}{L},$
there exist $\mathbf{x}^0:=\sts{x_1^0, x_2^0 }\in \R^2$ with $x_2^0 -x_1^0\geq L  $
and ${\bm \gamma}^0:=\sts{\gamma_1^0,\gamma_2^0 } \in \R^2$ such that
\begin{align}
\label{eq:h1alpha}
  \norm{ u-\sum_{j=1}^{2} Q_{\omega_j^0,c_j^0}\sts{\cdot-x_j^0}\e^{\i\gamma_j^0} }_{H^1}<\alpha.
\end{align}
Let  $\q = \sts{
    \bm{\omega}, \mathbf{c}, \mathbf{x}, \bm{\gamma} }
$ and
\begin{align*}
\q^0 =    \sts{
    \bm{\omega}^0, \mathbf{c}^0, \mathbf{x}^0, \bm{\gamma}^0 },\quad \mathbf{Q}^0 (x)= & \sum_{j=1}^{2} Q_{\omega_j^0,c_j^0}\sts{x-x_j^0}\e^{\i\gamma_j^0}.
\end{align*}
For any $ u $ with \eqref{eq:h1alpha} and $\q $, we define
\begin{align}\label{funct:decom}
  \eps\sts{x; \q, u }:=u\sts{x}-\sum_{j=1}^{2} Q_j\sts{x-x_j}\e^{\i\gamma_j}.
\end{align}
It is easy to see that
\begin{align}
\label{eq:2epsilon0}
    \varepsilon
    \sts{x; \q^{0}, \mathbf{Q}^0
    }\equiv 0.
\end{align}

Defining
$ {\rm P}\sts{\q, u}
    :=
    \sts{
        \varrho_1^1,~\varrho_1^2,~\varrho_1^3,~\varrho_1^4,~
        \varrho_2^1,~\varrho_2^2,~\varrho_2^3,~\varrho_2^4
    }\sts{\q, u }
$
by
  \begin{align*}
    &
    \varrho_j^1\sts{\q,u }
        :=\inprod{ \eps\sts{\cdot~;\q,u } }{~Q_j\sts{\cdot-x_j}\e^{\i\gamma_j}},\\
    &
    \varrho_j^2\sts{\q,u}
        :=\inprod{ \eps\sts{\cdot~;\q,u } }{\i \partial_x{Q_j\sts{\cdot-x_j}\e^{\i\gamma_j}} },
    \\
    &
    \varrho_j^3\sts{\q,u}
        :=\inprod{ \eps\sts{\cdot~;\q,u } }{\i {Q_j\sts{\cdot-x_j}\e^{\i\gamma_j}}},\\
    &
    \varrho_j^4\sts{\q,u }
        :=\inprod{ \eps\sts{\cdot~;\q,u } }{~\partial_x {Q_j\sts{\cdot-x_j}\e^{\i\gamma_j}}},
  \end{align*}
 where $k=1, 2$.  By simple calculations, we have
  \begin{align}
  \label{eq:eps2}
    \frac{\partial\eps}{\partial \omega_j}= -\partial_{\omega_j}{Q_j}\sts{x-x_j}\e^{\i\gamma_j},
    &\quad \frac{\partial\eps}{\partial c_j}=-\partial_{c_j}{Q_j}\sts{x-x_j}\e^{\i\gamma_j},\\
    \label{eq:eps1}
    \frac{\partial\eps}{\partial x_j}=~~\partial_x{Q_j}\sts{x-x_j}\e^{\i\gamma_j},
    &\quad \frac{\partial\eps}{\partial \gamma_j}=~~-\i{Q_j}\sts{x-x_j}\e^{\i\gamma_j},
  \end{align}
and
  \begin{align}
  \label{26}
    \int \abs{ \mathcal{Q}_1^0\sts{ x-x_1^0 }\e^{\i\gamma_1^0} ~ \mathcal{Q}_2^0\sts{ x-x_2^0 }\e^{\i\gamma_2^0} }
    \leq C_{\mathrm{abs}}\e^{-2\theta_1 L} ,
  \end{align}
  where
  $ \theta_1=\min\ltl{ ~\frac{ \sqrt{4\omega_1^0-\sts{c_1^0}^2} }{8}~,~\frac{ \sqrt{4\omega_2^0-\sts{c_2^0}^2} }{8}~ },$
  and
  $\mathcal{Q}_j^0$ denotes one of
  $
    Q_j^0,~\partial_xQ_j^0,~
    \left.\partial_{\omega_j}{Q_j}\right|_{\q=\q^0},
  $
  and
  $
    \left.\partial_{c_j}{Q_j}\right|_{\q=\q^0}.
  $
Inserting \eqref{eq:eps2} and \eqref{eq:eps1} into $\varrho_j^k\sts{\q}, $ we obtain
  \begin{align*}
    &
 \frac{\partial\varrho_j^1}{\partial \omega_k}\sts{\q^0,\mathbf{Q}^0}
    =
    \begin{cases}
      -\frac{\partial }{ \partial\omega_k^0 }M\sts{Q_k^0},&\mbox{if~} j=k,
      \\
      \bigo{ \e^{-2\theta_1 L} },&\mbox{if~} j\neq k,
    \end{cases}
    &
\frac{\partial\varrho_j^1}{\partial c_k}\sts{\q^0,\mathbf{Q}^0}
    =
    \begin{cases}
      -\frac{\partial }{ \partial c_k^0 }M\sts{Q_k^0},&\mbox{if~} j=k,
      \\
      \bigo{ \e^{-2\theta_1 L} }&\mbox{if~} j\neq k,
    \end{cases}
    \\
    &
\frac{\partial\varrho_j^1}{\partial x_k}\sts{\q^0,\mathbf{Q}^0}
    =
    \begin{cases}
      0,&\mbox{if~} j=k,
      \\
      \bigo{ \e^{ -2\theta_1 L } },&\mbox{if~} j\neq k,
    \end{cases}
    &
\frac{\partial\varrho_j^1}{\partial \gamma_k}\sts{\q^0,\mathbf{Q}^0}
    =
    \begin{cases}
      0,&\mbox{if~} j=k,
      \\
      \bigo{ \e^{ -2\theta_1 L } },&\mbox{if~} j\neq k,
    \end{cases}
  \end{align*}
  \begin{align*}
    &
\frac{\partial\varrho_j^2}{\partial \omega_k}\sts{\q^0,\mathbf{Q}^0}
    =
    \begin{cases}
      -\frac{\partial }{ \partial\omega_k^0 }P\sts{Q_k^0},&\mbox{~if~} j=k,
      \\
      \bigo{ \e^{-2\theta_1 L} },&\mbox{~if~} j\neq k,
    \end{cases}
    &
\frac{\partial\varrho_j^2}{\partial c_k}\sts{\q^0,\mathbf{Q}^0}
    =
    \begin{cases}
      -\frac{\partial }{ \partial c_k^0 }P\sts{Q_k^0},&\mbox{~if~} j=k,
      \\
      \bigo{ \e^{-2\theta_1 L} },&\mbox{~if~} j\neq k,
    \end{cases}
    \\
    &
\frac{\partial\varrho_j^2}{\partial x_k}\sts{\q^0,\mathbf{Q}^0}
    =
    \begin{cases}
      0,&\mbox{~if~} j=k,
      \\
      \bigo{ \e^{ -2\theta_1 L } },&\mbox{~if~} j\neq k,
    \end{cases}
    &
\frac{\partial\varrho_j^2}{\partial \gamma_k}\sts{\q^0,\mathbf{Q}^0}
    =
    \begin{cases}
      0,&\mbox{~if~} j=k,
      \\
      \bigo{ \e^{ -2\theta_1 L } },&\mbox{~if~} j\neq k,
    \end{cases}
  \end{align*}
  \begin{align*}
    &
\frac{\partial\varrho_j^3}{\partial \omega_k}\sts{\q^0,\mathbf{Q}^0}
    =
    \begin{cases}
      -\Im\int\frac{\partial}{\partial\omega_k^0}Q_k^0\bar{Q}_k^0,&\mbox{~if~} j=k,
      \\
      \bigo{ \e^{-2\theta_2 L} },&\mbox{~if~} j\neq k,
    \end{cases}
    &&
\frac{\partial\varrho_j^3}{\partial c_k}\sts{\q^0,\mathbf{Q}^0}
    =
    \begin{cases}
      -\Im\int\frac{\partial}{\partial c_k^0}Q_k^0\bar{Q}_k^0,&\mbox{~if~} j=k,
      \\
      \bigo{ \e^{-2\theta_2 L} },&\mbox{~if~} j\neq k,
    \end{cases}
    \\
    &
\frac{\partial\varrho_j^3}{\partial x_k}\sts{\q^0,\mathbf{Q}^0}
    =
    \begin{cases}
      -2P\sts{ {Q}_k^0 },&\mbox{~if~} j=k,
      \\
      \bigo{ \e^{ -2\theta_2 L } },&\mbox{~if~} j\neq k,
    \end{cases}
    &&
\frac{\partial\varrho_j^3}{\partial \gamma_k}\sts{\q^0,\mathbf{Q}^0}
    =
    \begin{cases}
      -2M\sts{ Q_k^0 },&\mbox{~if~} j=k,
      \\
      \bigo{ \e^{ -2\theta_2 L } },&\mbox{~if~} j\neq k,
    \end{cases}
  \end{align*}
  \begin{align*}
    &
\frac{\partial\varrho_j^4}{\partial \omega_k}\sts{\q^0,\mathbf{Q}^0}
    =
    \begin{cases}
      -\Re\int\frac{\partial }{ \partial \omega_k^0 }{Q_k^0}{\partial_x \bar{Q}_k^0},&\mbox{~if~} j=k,
      \\
      \bigo{ \e^{-2\theta_2 L} },&\mbox{~if~} j\neq k,
    \end{cases}
    &&
 \frac{\partial\varrho_j^4}{\partial c_k}\sts{\q^0,\mathbf{Q}^0}
    =
    \begin{cases}
      -\Re\int\frac{\partial }{ \partial c_k^0 }{Q_k^0}{\partial_x \bar{Q}_k^0},&\mbox{~if~} j=k,
      \\
      \bigo{ \e^{-2\theta_2 L} },&\mbox{if~} j\neq k,
    \end{cases}
    \\
    &
\frac{\partial\varrho_j^4}{\partial x_k}\sts{\q^0,\mathbf{Q}^0}
    =
    \begin{cases}
      \norm{ \partial_x Q_k^0}_{2}^2,&\mbox{if~} j=k,
      \\
      \bigo{ \e^{ -2\theta_2 L } },&\mbox{if~} j\neq k,
    \end{cases}
    &&
\frac{\partial\varrho_j^4}{\partial \gamma_k}\sts{\q^0,\mathbf{Q}^0}
    =
    \begin{cases}
      2P\sts{ Q_k^0 },&\mbox{if~} j=k,
      \\
      \bigo{ \e^{ -2\theta_2 L } },&\mbox{if~} j\neq k.
    \end{cases}
  \end{align*}
Hence we can decompose the Jacobian
$\left.\frac{\mathrm{D}{\rm P}}{\mathrm{D}\q} \right|_{\sts{\q,u}=\sts{\q^0,\mathbf{Q}^0}}$
into four $4\times 4$ submatrices,
\begin{align*}
\frac{\mathrm{D}{\rm P}}{\mathrm{D}\q} \sts{\q^0,\mathbf{Q}^0}
    =
   \left. \begin{pmatrix}
      \frac{\mathrm{D}{\rm P}_{1,1}}{\mathrm{D}\,\,\,\,\q\,\,\,}
      &
      \frac{\mathrm{D}{\rm P}_{1,2}}{\mathrm{D}\,\,\,\,\q\,\,\,}
      \\[6pt]
      \frac{\mathrm{D}{\rm P}_{2,1}}{\mathrm{D}\,\,\,\,\q\,\,\,}
      &
     \frac{\mathrm{D}{\rm P}_{2,2}}{\mathrm{D}\,\,\,\,\q\,\,\,}
    \end{pmatrix}\right|_{\sts{\q,u}=\sts{\q^0,\mathbf{Q}^0}}
\end{align*}
where
\begin{align*}
  \left.\frac{\mathrm{D}{\rm P}_{k,k}}{\mathrm{D}\,\,\q\,\,\,} \right|_{\sts{\q,u}=\sts{\q^0,\mathbf{Q}^0}}
  =
  \begin{pmatrix}
    -\frac{\partial }{ \partial \omega_k^0 }M\sts{Q_k^0}
        & -\frac{\partial }{ \partial c_k^0 }M\sts{Q_k^0}
            & 0
                & 0 \\[10pt]
   -\frac{\partial }{ \partial \omega_k^0 }P\sts{Q_k^0}
        & -\frac{\partial }{ \partial c_k^0 }P\sts{Q_k^0}
            & 0
                & 0 \\[10pt]
   \Im\int\frac{\partial}{\partial\omega_k^0}Q_k^0\bar{Q}_k^0
        & \Im\int\frac{\partial}{\partial c_k^0}Q_k^0\bar{Q}_k^0
            & -2P\sts{ {Q}_k^0 }
                & -2M\sts{ Q_k^0 } \\[10pt]
    \Re\int\frac{\partial }{ \partial \omega_k^0 }{Q_k^0}{\partial_x \bar{Q}_k^0}
        & \Re\int\frac{\partial }{ \partial c_k^0 }{Q_k^0}{\partial_x \bar{Q}_k^0}
            & \norm{ \partial_x Q_k^0}_{2}^2
                & 2P\sts{ Q_k^0 }
  \end{pmatrix}.
\end{align*}
By simple calculations, we have
\begin{align*}
  \det\left.\frac{\mathrm{D}{\rm P}_{k,k}}{\mathrm{D}\,\,\q\,\,\,} \right|_{\sts{\q,u}=\sts{\q^0,\mathbf{Q}^0}}
  =
    d''\sts{~\omega_k^0~,~c_k^0~}
    \times
    \ltl{ 2M\sts{Q_k^0}\norm{ \partial_x Q_k^0}_{L^2}^2 - 4\left[P\sts{ {Q}_k^0 } \right]^2},
\end{align*}
and
$$
  \det\left.\frac{\mathrm{D}{\rm P}_{j,k}}{\mathrm{D}\,\,\q\,\,\,} \right|_{\sts{\q,u}=\sts{\q^0,\mathbf{Q}^0}}
  =\bigo{ \e^{ -2\theta_1 L } },
\;\text{ for}\; j\neq k.$$
Putting together, we obtain
\begin{align*}
  \det  \frac{\mathrm{D}{\rm P}}{\mathrm{D}\q} \sts{\q^0,\mathbf{Q}^0}
  =
  \prod_{k=1}^{2} \left\{ \det
    d''\sts{\omega_k^0,c_k^0}
    \times
    \ntn{ 2M\sts{Q_k^0}\norm{ \partial_x Q_k^0}_{L^2}^2 - 4\left[P\sts{ {Q}_k^0 } \right]^2}\right\}
  +
  \bigo{ \e^{ -2\theta_1 L } }.
\end{align*}
The fact that
\begin{align*}
  2M\sts{Q_k^0}\norm{ \partial_x Q_k^0}_{L^2}^2 - 4\left[P\sts{ {Q}_k^0 } \right]^2>0
\end{align*}
in Appendix A, together with the non-degenerate condition \eqref{nondeg} implies that
\begin{align}\label{nondeg:determ}
      \det  \frac{\mathrm{D}{\rm P}}{\mathrm{D}\q} \sts{\q^0,\mathbf{Q}^0}>0
\end{align}
for sufficiently large $L$. We can conclude the proof by the Implicit Function Theorem.
\end{proof}

\begin{lemma}[Dynamic version]
\label{lem:mod2}
Let $L_{\ift}$~and~$\alpha_{\ift} $ be given by
Lemma \ref{lem:mod1}. If $ u\in \mathcal{C}\sts{~\left[0,T^{\ast}\right],H^1} $ is a solution to
\eqref{gDNLS} with $u\sts{0}\in \U{\alpha}{\bm{\omega}^0}{{\bf c}^0}{L},$ and
\begin{align*}
  u\sts{t}\in \U{\alpha}{\bm{\omega}^0}{{\bf c}^0}{ \frac{L}{2} },
  &
  \text{ for any }  t\in\left(0,T^{\ast}\right],
\end{align*}
where $ \alpha <\alpha_{\ift} $ and $L>2L_{\ift},$
then there exist unqiue $\mathcal{C}^1$ functions
\begin{align*}
  \q\sts{t}:=\sts{\bm{\omega}\sts{t},\mathbf{c}\sts{t},\mathbf{x}\sts{t},\bm{\gamma}\sts{t} }
  :
  \left[0,T^{\ast}\right]\mapsto \R^8
\end{align*}
with $ -2\sqrt{\omega_j(t)}<c_j(t)<2z_0\sqrt{\omega_j(t)}$ for all $t\in [0, T^*], j=1,2 ,$ such that
\begin{align}
\label{orth:dyn}
  \inprod{\eps(t)}{ R_j(t) }
  =
  \inprod{\eps(t)}{\i\partial_x{R_j}(t) }
  =
  \inprod{\eps(t)}{\i R_j\sts{t}}=\inprod{ \eps (t)}{ \partial_x{R_j}\sts{t} }
  =0,
\end{align}
where $R_j\sts{t,x}=Q_{\omega_j\sts{t},c_j\sts{t}}\sts{x-x_j\sts{t}}\e^{\i\gamma_j\sts{t}}$, $j=1,2$, and
\begin{align}\label{decom1}
  \eps\sts{t,x}=u\sts{t,x}-\sum_{j=1}^{2} R_j\sts{t,x}.
\end{align}
Moreover, for $t\in [0, T^*]$, we have
\begin{gather}
\label{dynamic:est:ift}
  \norm{\eps\sts{t}}_{H^1}+\sum^{2}_{j=1}\left(\abs{ \omega_j\sts{t}-\omega_j^0 } + \abs{ c_j\sts{t}-c_j^0 }\right)
  <C_{\ift}\alpha,
\\
\label{dynamic:tech:nondeg}
    \frac{1}{2}<\frac{ \sqrt{ 4\omega_j\sts{t}-\sts{c_j\sts{t}}^2 } }{ \sqrt{ 4\omega_j^0-\sts{c_j^0}^2 } }<2,
\\
\label{dynamic:para:contr}
    \abs{ \dot{\omega}_k\sts{t} }
    +
    \abs{ \dot{c}_k\sts{t} }+
    \abs{ \dot{x}_k\sts{t}-c_k\sts{t} }
    +
    \abs{ \dot{\gamma}_k\sts{t}-\omega_k\sts{t} }
    \leq
    C_{\mathrm{abs}}\sts{\norm{\eps\sts{t}}_{H^1} +\e^{ -\theta_2 \sts{ L+\theta_2 t } } },
\\
\label{dist:center}
    x_2\sts{t}-x_1\sts{t}>\frac{1}{2}\sts{ L + \theta_2 t },
\end{gather}
where $ \theta_2=\min\ltl{ ~\frac{ \sqrt{4\omega_1^0-\sts{c_1^0}^2} }{8}~,~\frac{ \sqrt{4\omega_2^0-\sts{c_2^0}^2} }{8}~ , c_2^0-c_1^0~}. $
\end{lemma}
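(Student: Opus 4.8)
The plan is to transport the static decomposition of Lemma \ref{lem:mod1} along the flow of \eqref{gDNLS} --- which immediately produces the decomposition \eqref{decom1}, the orthogonality \eqref{orth:dyn}, the $\mathcal{C}^1$ regularity and the pointwise bounds \eqref{dynamic:est:ift}, \eqref{dynamic:tech:nondeg} --- and then to read off the differential estimates \eqref{dynamic:para:contr}, \eqref{dist:center} from the modulation equations obtained by differentiating \eqref{orth:dyn} in time, together with a continuity argument. In detail: for each $t\in[0,T^{\ast}]$ one has $u(t)\in\U{\alpha}{\bm\omega^0}{\mathbf c^0}{L/2}$ with $L/2>L_{\ift}$ and $\alpha<\alpha_{\ift}$, so Lemma \ref{lem:mod1} yields a unique tuple $\q(t)=\sts{\bm\omega(t),\mathbf c(t),\mathbf x(t),\bm\gamma(t)}$, with $-2\sqrt{\omega_j(t)}<c_j(t)<2z_0\sqrt{\omega_j(t)}$, realizing \eqref{decom1} and \eqref{orth:dyn}; at $t=0$, since $u(0)\in\U{\alpha}{\bm\omega^0}{\mathbf c^0}{L}$, the same tuple satisfies $x_2(0)-x_1(0)>L/2$. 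The bounds \eqref{dynamic:est:ift} and \eqref{dynamic:tech:nondeg} are \eqref{est:ift} and \eqref{tech:nondeg} read at each $t$. For the $\mathcal{C}^1$ regularity: since $\sigma>1$ the map $v\mapsto|v|^{2\sigma}\partial_x v$ is (locally Lipschitz, hence) continuous from $H^1(\R)$ to $L^2(\R)$, so as a solution of \eqref{gDNLS} one has $u\in\mathcal{C}^1\sts{[0,T^{\ast}],H^{-1}}$; consequently the map $(\q,t)\mapsto{\rm P}(\q,u(t))$ of Lemma \ref{lem:mod1} is jointly $\mathcal{C}^1$, with $\q$-Jacobian non-singular by \eqref{nondeg:determ} and \eqref{dynamic:est:ift}, and the Implicit Function Theorem plus the uniqueness part of Lemma \ref{lem:mod1} glue the local branches into a single $\mathcal{C}^1$ map $\q:[0,T^{\ast}]\to\R^8$.

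\emph{Modulation equations.} Set $R=\sum_{j}R_j$. Using \eqref{sltn:eq} for each $Q_{\omega_j,c_j}$, a direct computation gives
\begin{align*}
  \i\eps_t+\eps_{xx}
  & =
  -\sum_{j=1}^{2}\mathrm{Mod}_j
  -\i\sts{ |u|^{2\sigma}u_x-\sum_{j=1}^{2}|R_j|^{2\sigma}\partial_x R_j },
\end{align*}
where $\mathrm{Mod}_j$ is the linear combination of $\partial_{\omega_j}R_j$, $\partial_{c_j}R_j$, $\partial_x R_j$, $\i R_j$ with coefficients $\i\dot\omega_j$, $\i\dot c_j$, $-\i(\dot x_j-c_j)$, $-(\dot\gamma_j-\omega_j)$, and where the last bracket, after inserting $u=R+\eps$, splits into a part linear in $\eps$ with smooth exponentially localized coefficients, a part superlinear in $\eps$, and the interaction term $|R|^{2\sigma}R_x-\sum_j|R_j|^{2\sigma}\partial_x R_j$, whose pairing against any smooth exponentially decaying function is $\bigo{\e^{-2\theta_2(x_2-x_1)}}$ by the exponential decay of $\Phi_{\omega_j,c_j}$ together with \eqref{dynamic:tech:nondeg}. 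Differentiating each of the eight identities in \eqref{orth:dyn} in $t$, substituting this equation for $\eps_t$ and the formula for $\partial_t R_j$ in terms of $\dot\q$, and integrating by parts so that no derivative falls on $\eps$ (which is why one must handle $|u|^{2\sigma}u_x$ rather than just $u$), produces a linear system $\sts{A_0(\q)+A_1}\mathbf m=\mathbf b$ for $\mathbf m=\sts{\dot\omega_k,\dot c_k,\dot x_k-c_k,\dot\gamma_k-\omega_k}_{k=1,2}$, in which $A_0(\q)$ agrees up to $\bigo{\e^{-2\theta_1 L}}$ with the block-diagonal matrix of Lemma \ref{lem:mod1}, $A_1=\bigo{\norm{\eps}_{H^1}}$, and $\mathbf b=\bigo{\norm{\eps}_{H^1}}+\bigo{\e^{-2\theta_2(x_2-x_1)}}$; here \eqref{orth:dyn} is used to cancel the contributions to $\mathbf b$ that would otherwise be only $\bigo{1}$. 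Since $\det A_0(\q(t))$ stays bounded away from $0$ by \eqref{nondeg:determ} and \eqref{dynamic:est:ift}, inverting $A_0+A_1$ for $\alpha$ small and $L$ large gives
\begin{align*}
  \abs{\mathbf m(t)}
  & \leq
  C_{\mathrm{abs}}\sts{ \norm{\eps(t)}_{H^1}+\e^{-2\theta_2(x_2(t)-x_1(t))} }.
\end{align*}

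\emph{Separation and conclusion.} Applying Lemma \ref{lem:mod1} at each $t$ with parameter $L/2$ gives the crude bound $x_2(t)-x_1(t)>L/4$, so the last display already yields \eqref{dynamic:para:contr} with $\e^{-2\theta_2(x_2-x_1)}$ replaced by the fixed quantity $\e^{-\theta_2 L/2}$. Let $T_1\in(0,T^{\ast}]$ be maximal such that $x_2(t)-x_1(t)\geq\tfrac12\sts{L+\theta_2 t}$ on $[0,T_1]$ ($T_1>0$ since $x_2(0)-x_1(0)>L/2$). On $[0,T_1]$, using \eqref{dynamic:est:ift}, $\abs{c_j(t)-c_j^0}\leq C_{\ift}\alpha$ and $\theta_2\leq c_2^0-c_1^0$,
\begin{align*}
  \frac{\d}{\d t}\sts{x_2(t)-x_1(t)}
  & =
  \sts{c_2(t)-c_1(t)}+\bigo{\norm{\eps(t)}_{H^1}+\e^{-\theta_2 L/2}}
  \\
  & \geq
  \sts{c_2^0-c_1^0}-C_{\mathrm{abs}}\sts{C_{\ift}\alpha+\e^{-\theta_2 L/2}}
  >
  \frac{\theta_2}{2},
\end{align*}
whence $x_2(t)-x_1(t)>\tfrac{L}{2}+\tfrac{\theta_2}{2}t=\tfrac12\sts{L+\theta_2 t}$ on $[0,T_1]$, so $T_1=T^{\ast}$, which is \eqref{dist:center}. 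Feeding \eqref{dist:center} back into the interaction bound turns $\e^{-2\theta_2(x_2-x_1)}$ into $\e^{-\theta_2(L+\theta_2 t)}$, upgrading the estimate above to \eqref{dynamic:para:contr}.

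\emph{Main obstacle.} The hard part is the modulation-equation step: setting up the $8\times8$ system and bounding its right-hand side $\mathbf b$ by $\bigo{\norm{\eps}_{H^1}}$ plus exponentially small interaction terms. The difficulty proper to \eqref{gDNLS} is that the nonlinearity $\i|u|^{2\sigma}u_x$ carries a derivative, so the equation for $\eps$ involves $\eps_x$, which lies only in $L^2$; one must therefore pair only against the smooth, exponentially localized orthogonality directions and integrate by parts so that no derivative ever lands on $\eps$, and one must use all four orthogonality conditions per soliton to kill the would-be $\bigo{1}$ terms, so that the invertibility of $A_0$ (resting on Appendix A and \eqref{nondeg}) can be used to close the bound on $\abs{\mathbf m}$.
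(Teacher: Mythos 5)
Your proposal is correct and follows essentially the same route as the paper: apply the static Lemma \ref{lem:mod1} at each time to get the decomposition, the bounds \eqref{dynamic:est:ift}--\eqref{dynamic:tech:nondeg} and the crude separation $x_2(t)-x_1(t)>L/4$; differentiate the orthogonality conditions against the equation for $\eps$ to obtain an invertible linear system for the modulation parameters, yielding the rough bound with $\e^{-\theta_2 L/2}$; then integrate $\dot x_2-\dot x_1\geq\tfrac12(c_2^0-c_1^0)$ to get \eqref{dist:center} and feed it back to obtain \eqref{dynamic:para:contr}. The extra details you supply (the $\mathcal{C}^1$ gluing argument, the explicit $8\times 8$ system, the continuity argument for $T_1$) are consistent elaborations of steps the paper delegates to Lemma 4 of \cite{MartelM:Instab:gKdV} or treats only formally.
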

\begin{proof}
First, since $ u\sts{t}\in \U{\alpha}{\bm{\omega}^0}{{\bf c}^0}{ \frac{L}{2} } $  for any $t\in\left(0, T^{\ast}\right]$, there exist $\mathbf{ x }^0\sts{t}$
and
${\bm\gamma}^0\sts{t}$ such that
\begin{align}
\label{eq:1}
  \norm{
    u\sts{t}-\sum_{j=1}^{2} Q_{\omega_j^0,c_j^0}\sts{\cdot-x_j^0\sts{t}}\e^{\i\gamma_j^0\sts{t}}
  }_{H^1}<\alpha, \text{with }  x_2^0\sts{t}- x_1^0\sts{t}\geq\frac{L}{2} .
\end{align}
By Lemma \ref{lem:mod1},  we have the decomposition \eqref{orth:dyn} with
the estimates \eqref{dynamic:est:ift} and \eqref{dynamic:tech:nondeg}. Moreover, by the proof of Lemma \ref{lem:mod1},
we can obtain the estimate on $\mathbf{x(t)}$, i.e.
$$ \abs{ x_j(t)-x_j^0(t) }<C_{\ift}\alpha, $$
which together with  $x_2^0(t) - x_1^0(t) \geq L/2$ implies that
\begin{align}
\label{eq:3}
  x_2(t)-x_1(t)
  >\frac{L}{4}.
\end{align}
for sufficiently small $\alpha$ and sufficiently large $L$.

Now, we turn to the proof of \eqref{dynamic:para:contr}.
The rigorous calculations for \eqref{dynamic:para:contr} can be obtained by
Lemma 4 in \cite{MartelM:Instab:gKdV}. Here, we only give the formally calculations.
On one hand, by the equation \eqref{gDNLS} and the decomposition \eqref{decom1}, we have
\begin{align}
\label{eq:eps}
\notag
    0=&\i \partial_t\eps +\partial_{xx}\eps
    - \sum_{k=1}^{2}\i\sts{ \dot{x}_k-c_k }\partial_xR_k
    - \sum_{k=1}^{2}\sts{ \dot{\gamma}_k-\omega_k }R_k
    + \sum_{k=1}^{2}\i \dot{\omega}_k\partial_{\omega_k}R_k
    \\
\notag
    &
    + \sum_{k=1}^{2}\i \dot{c}_k\partial_{c_k}R_k
    + \i\abs{\sum_{k=1}^{2} R_k + \eps }^{2\sigma}\partial_x\sts{\sum_{k=1}^{2} R_k + \eps}
    - \sum_{k=1}^{2}\i\abs{R_k}^{2\sigma}\partial_xR_k
    \\
\notag
    =&\i \partial_t\eps +\partial_{xx}\eps
    - \sum_{k=1}^{2}\i\sts{ \dot{x}_k-c_k }\partial_xR_k
    - \sum_{k=1}^{2}\sts{ \dot{\gamma}_k-\omega_k }R_k
    + \sum_{k=1}^{2}\i \dot{\omega}_k\partial_{\omega_k}R_k
    \\
    &
    + \sum_{k=1}^{2}\i \dot{c}_k\partial_{c_k}R_k
    + \bigo{
          \abs{{\mathcal R}_1{\mathcal R}_2}
        + \abs{ \eps } + \abs{\partial_x\eps}
        },
  \end{align}
where we used
\begin{align*}
    \i \partial_t{R}_k + \partial_{xx}R_k
    =
    &
    -\i\abs{R_k}^{2\sigma}\partial_xR_k - \omega_k R_k + \i\dot{\omega}_k\partial_{\omega_k}R_k
    \\
    &
    + \i\dot{c}_k \partial_{c_k}{R}_k
    -\i\sts{ \dot{x}_k-c_k }\partial_x{R}_k
    -\sts{ \dot{\gamma}_k -\omega_k } R_k,
\end{align*}
and $ \abs{ \eps }+\abs{ \mathcal{ R }_1 }+\abs{ \mathcal{ R }_2 }\lesssim 1 $
with $\mathcal{R}_k$ is one of $ R_k $ and $\partial_xR_k$,
Then, by \eqref{eq:eps} and the orthogonal condition \eqref{orth:dyn}, we have
\begin{align}
\label{eq:4}
    \abs{ \dot \omega_k (t)} + \abs{ \dot c_k(t) } + \abs{ \dot{x}_k(t)-c_k(t) } + \abs{ \dot{\gamma}_k(t)-\omega_k (t)}
    \leq
     C_{ \mathrm{abs} }\left(\norm{\eps}_{H^1} + \e^{ -2\theta_2\abs{x_1(t)-x_2(t)} }\right),
\end{align}
where we used the fact:
  \begin{align}
  \label{38}
    \int \abs{\mathcal{R}_1\mathcal{R}_2}
    \leq C_{\mathrm{abs}}
    \int    \e^{ -\frac{ \sqrt{ 4\omega_1(t)-\sts{c_1(t)}^2 } }{2}\abs{x-x_1(t)} }
            \e^{ -\frac{ \sqrt{ 4\omega_2(t)-\sts{c_2(t)}^2 } }{2}\abs{x-x_2(t)} }\dx
    \leq C_{\mathrm{abs}}\e^{ -2\theta_2\abs{x_1(t)-x_2(t)} }.
  \end{align}
Inserting \eqref{eq:3} into \eqref{eq:4},
we obtain the following "rough" estimate
\begin{align}
\label{eq:5}
   \abs{ \dot \omega_k (t)} + \abs{ \dot c_k(t) } + \abs{ \dot{x}_k(t)-c_k(t) } + \abs{ \dot{\gamma}_k(t)-\omega_k (t)}
    \leq
     C_{ \mathrm{abs} }\left(\norm{\eps}_{H^1} + \e^{ -\frac{\theta_2}{2}L}\right).
\end{align}
On the other hand, combining \eqref{dynamic:est:ift}
with \eqref{eq:5}, we have
  \begin{align}
  \label{eq:6}
  \notag
    \dot{x}_2(t)-\dot{x}_1(t)
    =& \sts{ \dot{x}_2(t) - c_2(t) } - \sts{ \dot{x}_1(t) - c_1(t) } + \sts{ c_2(t)-c_2^0 } - \sts{ c_1(t)-c_1^0 } + \sts{ c_2^0-c_1^0 }
    \\
    \notag
    \geq & \sts{ c_2^0-c_1^0 } - C_{\mathrm{abs}}\alpha - C_{\mathrm{abs}} \e^{-\frac{\theta_2}{4} L}
    \\
    \geq & \frac{1}{2}\sts{ c_2^0-c_1^0 },
  \end{align}
then integrating \eqref{eq:6}, we obtain
  \begin{align*}
    x_2(t)-x_1(t)
    \geq
    x_2(0)-x_1(0) + \frac{1}{2}\int_{0}^{t}\sts{ c_2^0-c_1^0 }\d s
    \geq \frac{L}{2}+\frac{1}{2}\sts{ c_2^0-c_1^0 }t,
  \end{align*}
which implies that
  \begin{align*}
  \abs{ \dot \omega_k (t)} + \abs{ \dot c_k(t) } + \abs{ \dot{x}_k(t)-c_k(t) } + \abs{ \dot{\gamma}_k(t)-\omega_k (t)}
    \leq
    C_{ \mathrm{abs} }\left( \norm{\eps}_{H^1} + \e^{ -\theta_2\sts{ L+\theta_2 t } }\right).
  \end{align*}
This concludes the proof.
\end{proof}

\section{Monotonicity formula}\label{sect:mon}
\label{sec:mono}

In \cite{MiaoTX:DNLS:Stab}, under the non-degenerate condition
\begin{align*}
  \det\ntn{ d''\sts{\omega, c} } <0,
\end{align*}Miao, Tang and Xu obtained the orbital stability of the single solitary wave of the equation \eqref{gDNLS} with $\sigma=1$ in $H^1(\R)$ by the conservation laws of the energy, mass and momentum, these conservation laws were used to refine the estimates about the radiation term $\eps(t)$ and parameters variance $\abs{\omega(t)-\omega(0)} + \abs{c(t)-c(0)}$.
In this section, because of the multi-dimension of parameters $ \bm{\omega}, \mathbf{c}$ in dealing with the multi-solitary waves, we will introduce the analogue monotonicity formulas as those in \cite{LeWu:DNLS},\cite{MiaoTX:DNLS:Stab} instead of the conservation laws to refine the estimates \eqref{dynamic:est:ift}
and \eqref{dynamic:para:contr} about the radiation term $\eps(t)$ and parameters variance $\abs{\omega_k(t)-\omega_k(0)}$ and $\abs{c_k(t)-c_k(0)}$, $k=1, 2$. Those monotonicity formulas are  related to the localized mass and momentum.

We first give a Virial type identity.
\begin{lemma}\label{lem:virial}
Let $g:\R\mapsto\R$ be a $\mathcal{C}^3$ real-valued function such that
$g'$, $g''$ and $g'''$ are bounded.
If $u\in \mathcal{C}\sts{ [0, T^{\ast}], H^1 }$ is a solution of \eqref{gDNLS},
then, for all $t\in[0, T^*]$, we have
\begin{align*}
      \frac{\d}{\d t}\int \abs{u}^2~g
        =&
        2\Im\int\bar{u}u_x~g' +\frac{1}{\sigma}\int\abs{u}^{2\sigma+2}~g'.
        \\
        -\frac{\d}{\d t}\Im\int \bar{u}u_x~g
        =
        &
        -2\int\abs{u_x}^2~g'
        - \Im\int\abs{u}^{2\sigma}\bar{u}u_x~g'
        +\frac{1}{2}\int\abs{u}^2~g'''.
\end{align*}
\end{lemma}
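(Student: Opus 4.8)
The plan is to read both formulas as localized-in-$x$ mass and momentum conservation laws, each obtained by differentiating in $t$, inserting \eqref{gDNLS}, and integrating by parts so that all spatial derivatives fall on the weight $g$. It is cleanest to do the computation first for smooth, rapidly decaying $u$ — so that every boundary term at $x=\pm\infty$ vanishes, using that $g'$, $g''$, $g'''$ are bounded — and then obtain the stated $H^1$ identities by approximating $u_0$ in $H^1$ by Schwartz data and passing to the limit, invoking the continuous dependence of Theorem \ref{Thm:LWP}; on the right-hand sides one only meets $u$, $u_x$ and the bounded functions $g'$, $g''$, $g'''$, with the term $\int\abs{u}^{2\sigma+2}g'$ controlled by the embedding $H^1(\R)\hookrightarrow L^{2\sigma+2}(\R)$.

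For the first identity I would rewrite \eqref{gDNLS} as $u_t=\i u_{xx}-\abs{u}^{2\sigma}u_x$ and compute $\partial_t\abs{u}^2=2\Re\sts{\bar u u_t}$. The dispersive part gives $2\Re\sts{\i\bar u u_{xx}}=-2\Im\sts{\bar u u_{xx}}=-2\,\partial_x\Im\sts{\bar u u_x}$, and one integration by parts against $g$ turns it into $2\Im\int\bar u u_x\,g'$. The nonlinear part gives $-2\abs{u}^{2\sigma}\Re\sts{\bar u u_x}=-\abs{u}^{2\sigma}\partial_x\abs{u}^2$, which is a constant multiple of $\partial_x\sts{\abs{u}^{2\sigma+2}}$; one further integration by parts against $g$ produces the $\int\abs{u}^{2\sigma+2}g'$ term.

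For the second identity I would differentiate the momentum density $\Im\sts{\bar u u_x}$, which now needs both $u_t$ and $u_{xt}=\partial_x\!\sts{\i u_{xx}-\abs{u}^{2\sigma}u_x}$. Substituting and using $\Im\sts{\bar u u_{xx}}=\partial_x\Im\sts{\bar u u_x}$, the nonlinear contributions reassemble into $-\partial_x\!\sts{\abs{u}^{2\sigma}\Im\sts{\bar u u_x}}$, so that integrating $-(\cdot)\,g$ by parts yields exactly $-\Im\int\abs{u}^{2\sigma}\bar u u_x\,g'$. The linear (Schr\"odinger) contributions, once rewritten through $\partial_x^k\abs{u}^2$ and $\abs{u_x}^2$ and with all derivatives shifted onto $g$ by repeated integration by parts, collapse to $-2\int\abs{u_x}^2 g'+\tfrac12\int\abs{u}^2 g'''$; the $g'''$ term is the only occurrence of a third derivative of $g$, and is forced by the second-order dispersion.

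I expect no essential obstacle; the only points needing care are (i) the bookkeeping in the linear part of the momentum identity, where several third-order spatial derivatives appear and must be reorganized so that only the single $g'''$ term survives after the integrations by parts, and (ii) the passage from Schwartz data to general $H^1$ data, which is the routine density argument referred to above and is exactly why it suffices to require $g\in\mathcal C^3$ with $g'$, $g''$, $g'''$ bounded rather than, say, $g$ compactly supported.
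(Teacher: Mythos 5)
Your proposal is correct and is exactly the direct computation that the paper compresses into ``It follows from simple computations'': differentiate the localized mass and momentum densities, substitute $u_t=\i u_{xx}-\abs{u}^{2\sigma}u_x$, write everything as exact $x$-derivatives, and move all derivatives onto $g$; the density/regularization step you add to justify the manipulations for $H^1$ solutions is standard and sound. One point worth making explicit: carrying out your first computation gives $-\abs{u}^{2\sigma}\partial_x\abs{u}^2=-\tfrac{1}{\sigma+1}\partial_x\bigl(\abs{u}^{2\sigma+2}\bigr)$, so the coefficient of $\int\abs{u}^{2\sigma+2}g'$ is $\tfrac{1}{\sigma+1}$, not the $\tfrac{1}{\sigma}$ printed in the lemma; the paper itself uses $\tfrac{1}{\sigma+1}$ downstream (see the term $\tfrac{\triangle\omega}{2(\sigma+1)\sqrt{t+a}}\int\abs{u}^{2\sigma+2}g'$ in \eqref{viri:1}), so the statement contains a typo and your ``constant multiple'' hedge resolves to $\tfrac{1}{\sigma+1}$. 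The second identity checks out exactly as you describe.
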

\begin{proof}
It follows from simple computations.
\end{proof}

Now suppose $\triangle\omega\in\R,$ $\triangle c\in\R,$ $ \bar{x}^0\in\R, $ $\mu \in\R,$ and $a>0$, then by Lemma \ref{lem:virial}, we have for any $t\in [0, T^*]$,
    \begin{align}
    \label{viri:1}
    \notag
      &\frac{\d}{\d t}
      \ltl{
        \frac{\triangle \omega}{2}\int\abs{u}^2~g\sts{ \frac{x-\bar{x}^0 -\mu t }{ \sqrt{t+a} } }
        -
        \frac{\triangle c}{2}\Im\int\bar{u}u_x~g\sts{ \frac{x-\bar{x}^0 -\mu t }{ \sqrt{t+a} } }
       }
      \\
      \notag
      =&
        \frac{1}{\sqrt{t+a}}
            \int
                \ltl{
                    - \triangle c\abs{u_x}^2
                    + \sts{ \triangle\omega + \frac{ \mu \triangle c }{ 2 } }\Im\sts{\bar{u}u_x}
                    - \frac{\mu\triangle\omega}{2}\abs{u}^2
                    }~g'\sts{ \frac{x-\bar{x}^0 -\mu t }{ \sqrt{t+a} } }
      \\
      \notag
      & + \frac{ 1 }{ \sts{t+a} }
        \int
            \ltl{
                \frac{ \triangle c }{ 4 }\Im\sts{ \bar{u}u_x } - \frac{\triangle\omega}{4}\abs{u}^2
            }\Lambda g\sts{ \frac{x-\bar{x}^0 -\mu t }{ \sqrt{t+a} } }
        + \frac{ \triangle c }{ 4 (t+a)^{3/2} }\int\abs{u}^2~g'''\sts{ \frac{x-\bar{x}^0 -\mu t }{ \sqrt{t+a} } }
      \\
      &
        - \frac{\triangle c}{ 2 \sqrt{t+a} }
            \Im\int\abs{u}^{2\sigma}\bar{u}u_x~g'\sts{ \frac{x-\bar{x}^0 -\mu t }{ \sqrt{t+a} } }
        + \frac{ \triangle\omega }{ 2\sts{\sigma+1}\sqrt{t+a} }
            \int\abs{u}^{2\sigma+2}~g'\sts{ \frac{x-\bar{x}^0 -\mu t }{ \sqrt{t+a} } },
    \end{align}
where $ \sts{ \Lambda g }\sts{x}:=xg'\sts{x} .$

Now let
 \begin{align*}
 \bar{x}^0 = \frac{x^0_1 + x^0_2}{2},\quad \mu = 2\frac{\omega_2\sts{0}-\omega_1\sts{0}}{c_2\sts{0}-c_1\sts{0}},
 \end{align*}
 and define the following functional
\begin{align}
\label{j:sum}
\notag
  \mathcal{J}_{\mathrm{sum}}\sts{t}
  =
  &
            \frac{ \omega_1\sts{0} }{2}\int\abs{u(t,x)}^2~\varphi\sts{ - \frac{ x-\bar{x}^0-\mu t}{ \sqrt{t+a} } }
         -
         \frac{ c_1\sts{0} }{2}\Im\int\bar{u}u_x~\varphi\sts{ - \frac{ x-\bar{x}^0-\mu t}{ \sqrt{t+a} }}
   \\
  &
    +
         \frac{ \omega_2\sts{0} }{2}\int\abs{u(t,x)}^2~\varphi\sts{ \frac{ x-\bar{x}^0-\mu t}{ \sqrt{t+a} } }
         -
         \frac{ c_2\sts{0} }{2}\Im\int\bar{u}u_x~\varphi\sts{ \frac{ x-\bar{x}^0-\mu t}{ \sqrt{t+a} } },
\end{align}
which is used to capture the localized mass and momentum around each solitary waves.  According to the weak interactions between the solitary waves, we have the following monotonicity properties.

\subsection{Monotone result for the line $\bar{x}^0 +\mu t$}
\begin{proposition}
\label{pro:mono1}
  Let $a=L^2/64$ and $u\in\mathcal{C}\sts{ \left[0, T^{\ast}\right], H^1 }$
  be a solution satisfying the assumption of Lemma \ref{lem:mod2}.
  Then, there exists $C_{\mathrm{abs}}$ such that
  \begin{align*}
    \frac{\d}{\d t}\mathcal{J}_{\mathrm{sum}}\sts{t}
    \leq
         \frac{C_{\mathrm{abs}}}{(t+a)^{3/2}}\norm{\eps\sts{t}}_{H^1}^2 +   \frac{C_{\mathrm{abs}}}{(t+a)^{3/2}}\e^{-\theta_3\sts{ L+\theta_3 t }},
  \end{align*}
  where
    $
      \theta_3=\min\ltl{ ~\frac{ \sqrt{4\omega_1^0-\sts{c_1^0}^2} }{64}~,~\frac{ \sqrt{4\omega_2^0-\sts{c_2^0}^2} }{64}~, 4\sts{ \mu -c_1^0}~,~4\sts{c_2^0- \mu }~}.
    $
\end{proposition}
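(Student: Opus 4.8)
The plan is to differentiate $\mathcal{J}_{\mathrm{sum}}$ by means of the Virial identity \eqref{viri:1}, substitute the geometric decomposition $u=R_1+R_2+\eps$ from Lemma \ref{lem:mod2}, and then exploit the choice of $\mu$ to show that every leading-order contribution is either exponentially small (because the cut-offs live far from both solitary waves) or has a favourable sign, leaving only genuinely lower-order remainders controlled by $(t+a)^{-3/2}\norm{\eps}_{H^1}^2$. Concretely, I would apply \eqref{viri:1} twice: once with $\sts{\triangle\omega,\triangle c,g}=\sts{\omega_1(0),c_1(0),\varphi(-\cdot)}$, for the first pair of terms defining $\mathcal{J}_{\mathrm{sum}}$, and once with $\sts{\omega_2(0),c_2(0),\varphi(\cdot)}$ for the second pair (the differentiation being justified as in the references to \cite{MartelMT:Stab:gKdV, MartelMT:Stab:NLS} quoted in Lemma \ref{lem:mod2}). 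This writes $\frac{\d}{\d t}\mathcal{J}_{\mathrm{sum}}$ as $(t+a)^{-1/2}$ times an integral of a quadratic-plus-nonlinear density against $g_j'$, plus $(t+a)^{-1}$ times an integral of a quadratic density against $\Lambda g_j$, plus $(t+a)^{-3/2}$ times an integral of $\abs{u}^2$ against $g_j'''$, where $g_1(y)=\varphi(-y)$, $g_2(y)=\varphi(y)$ and $\varphi$ is a fixed even cut-off with $\varphi'$ compactly supported and $\varphi'\geq0$.

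The first reduction isolates the terms carrying at least one factor $R_1$ or $R_2$. The weights $g_j'$, $\Lambda g_j$, $g_j'''$ are all supported within $O(\sqrt{t+a})$ of the line $\bar x^0+\mu t$, while $R_j$ and its derivatives decay exponentially off the centre $x_j(t)$ at rate $\tfrac12\sqrt{4\omega_j(t)-c_j(t)^2}$. Assumptions $(b)$--$(c)$ force $c_1^0<\mu<c_2^0$ with definite gaps, hence also $c_1(0)<\mu<c_2(0)$ for $\alpha$ small, so by \eqref{dist:center} and \eqref{dynamic:tech:nondeg} the line stays at distance $\gtrsim L+\theta_3 t$ from both $x_j(t)$, with $\theta_3$ exactly the rate in the statement. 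Consequently all pure-soliton terms and all soliton--$\eps$ cross terms are bounded by $C_{\mathrm{abs}}(t+a)^{-1/2}\e^{-\theta_3(L+\theta_3 t)}\leq C_{\mathrm{abs}}(t+a)^{-3/2}\e^{-\frac{\theta_3}{2}(L+\theta_3 t)}$, where the cross terms also use $\norm{\eps}_{H^1}<C_{\ift}\alpha<1$ from \eqref{dynamic:est:ift}; shrinking $\theta_3$ to absorb the polynomial prefactor produces the exponential term of the conclusion.

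The heart of the proof is the pure-$\eps$ part of the leading term. Since $\varphi'$ is even, $g_1'=-\varphi'$, $g_2'=\varphi'$, and using the defining relation $\omega_2(0)-\omega_1(0)=\tfrac{\mu}{2}\sts{c_2(0)-c_1(0)}$ the quadratic pure-$\eps$ density collapses to a perfect square, so the leading quadratic term becomes
\begin{align*}
  -\frac{c_2(0)-c_1(0)}{\sqrt{t+a}}\int \varphi'\!\sts{\tfrac{x-\bar x^0-\mu t}{\sqrt{t+a}}}\,\Bigl|\eps_x-\tfrac{\i\mu}{2}\eps\Bigr|^2\d x\;\leq\;0,
\end{align*}
since $c_2(0)-c_1(0)>0$ by $(b)$--$(c)$ (and $\alpha$ small) and $\varphi'\geq 0$. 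The nonlinear pure-$\eps$ contributions reorganize, via the same algebra, into $-c_\sigma(t+a)^{-1/2}\int\varphi'\abs{\eps}^{2\sigma+2}\leq 0$ — the sign using $\sigma\in(1,2)$ and $\mu>0$ — plus a residual proportional to $(t+a)^{-1/2}\int\abs{\eps}^{2\sigma}\Im\sts{\bar\eps\sts{\eps_x-\tfrac{\i\mu}{2}\eps}}\varphi'$, which after Cauchy--Schwarz and AM--GM (splitting $\abs{\eps}^{2\sigma+1}\abs{\eps_x-\tfrac{\i\mu}{2}\eps}$ between $\abs{\eps}^{2\sigma+2}$ and $\abs{\eps_x-\tfrac{\i\mu}{2}\eps}^2$, pulling out $\norm{\eps}_{L^\infty}^{\sigma}\lesssim\alpha^{\sigma}$) is dominated by the two nonpositive terms once $\alpha$ is small. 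I would then discard these three nonpositive leading terms, keeping them available as absorbers.

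Finally, the $(t+a)^{-1}$ pure-$\eps$ term collapses, by the same even-$\varphi'$ and $\mu$-algebra, to something proportional to $(t+a)^{-1}\int y\varphi'(y)\,\Im\sts{\bar\eps\sts{\eps_x-\tfrac{\i\mu}{2}\eps}}$; since $\abs{y\varphi'(y)}\lesssim\varphi'(y)$, Cauchy--Schwarz with a $\sqrt{t+a}$-dependent Young weight splits it into a piece $\lesssim(t+a)^{-1/2}\int\varphi'\abs{\eps_x-\tfrac{\i\mu}{2}\eps}^2$, absorbed by the leading nonpositive term, and a piece $\lesssim(t+a)^{-3/2}\int\varphi'\abs{\eps}^2\leq C_{\mathrm{abs}}(t+a)^{-3/2}\norm{\eps}_{H^1}^2$; the genuine $(t+a)^{-3/2}$ term is directly $\lesssim(t+a)^{-3/2}\norm{\eps}_{L^2}^2$. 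Collecting the exponential contributions of the second paragraph with these gives the stated estimate. The step I expect to be the main obstacle is the third paragraph: checking that the choice of $\mu$ (and of $\varphi$) makes the leading quadratic and nonlinear densities collapse to manifestly nonpositive perfect squares, and that these squares together with the good $\abs{\eps}^{2\sigma+2}$ term are precisely strong enough to absorb every cross-type remainder down to the sharp rate $(t+a)^{-3/2}$ — which is where the hypotheses $(b)$, $(c)$, $\sigma\in(1,2)$ and $a=L^2/64$ are genuinely used.
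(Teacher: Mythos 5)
Your proposal is correct and follows essentially the same route as the paper: the paper likewise differentiates the weighted functional via the Virial identity, and your ``perfect square'' $\bigl|\eps_x-\tfrac{\i\mu}{2}\eps\bigr|^2$ is exactly its gauge transformation $v=u\e^{-\i\mu x/2}$, which collapses the leading quadratic density to $-|v_x|^2\varphi'\leq 0$ and, using $\omega_2(0)-\omega_1(0)=\tfrac{\mu}{2}(c_2(0)-c_1(0))$ with $\mu>0$, turns the nonlinear terms into a good-signed multiple of $-|v|^{2\sigma+2}\varphi'$ plus a remainder absorbed by Cauchy--Schwarz, just as you describe. The only organizational difference is that the paper keeps the full solution $v$ inside every integral and localizes once at the end (Lemma \ref{lem:p est} bounds $\int_{\Omega_{w}}|u|^p$ by $C\e^{-\theta_3(L+\theta_3 t)}+C\norm{\eps}_{H^1}^p$), rather than expanding $u=R_1+R_2+\eps$ termwise in each density, which spares it the harmless but fiddly justification of expanding the non-polynomial expression $|u|^{2\sigma}\bar u u_x$ and lets it treat the $|v|^{4\sigma+2}$ term by an $L^\infty$--Sobolev estimate instead of your direct $\norm{\eps}_{L^\infty}\lesssim\alpha$ absorption.
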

We rewrite $\mathcal{J}_{\mathrm{sum}}\sts{t}$
as the following identity
\begin{align}
\label{j2}
  \mathcal{J}_{\mathrm{sum}}\sts{t}
  =
  &
        \frac{ \omega_1\sts{0} }{2}\int\abs{u(t,x)}^2
         -
         \frac{ c_1\sts{0} }{2}\Im\int\sts{\bar{u}u_x}\sts{t,x}
    +
    \mathcal{J}\sts{t}
\end{align}
where
\begin{align*} \mathcal{J}\sts{t} =  & \frac{\omega_2\sts{0}-\omega_1\sts{0}}{2}\int\abs{u(t,x)}^2~\varphi\sts{ \frac{x-\bar{x}^0 -\mu t }{ \sqrt{t+a} } } \\
 & - \frac{c_2\sts{0}-c_1\sts{0}}{2}\Im\int\sts{\bar{u}u_x}\sts{t,x}~\varphi\sts{ \frac{x-\bar{x}^0 -\mu t }{ \sqrt{t+a} } } .
 \end{align*}
 By the conservation of mass and momentum, it suffices to show
\begin{proposition}\label{prop:mono1}
  Let $a$, $\theta_3$ and $u$ be as those in Proposition \ref{pro:mono1}.
  Then, there exists $C_{\mathrm{abs}}$ such that
  \begin{align*}
    \frac{\d}{\d t}\mathcal{J}\sts{t}
    \leq \frac{C_{\mathrm{abs}}}{(t+a)^{3/2}}\norm{\eps\sts{t}}_{H^1}^2 +
            \frac{C_{\mathrm{abs}}}{(t+a)^{3/2}}\e^{-\theta_3\sts{ L+\theta_3 t }}.
  \end{align*}
Moreover, we have
\begin{align}
\label{mono1}
  \mathcal{J}\sts{t} - \mathcal{J}\sts{0}
    \leqslant
    \frac{C_{\mathrm{abs}}}{ L }\sup_{0<s<t}\norm{\eps\sts{s}}_{H^1}^2
    +
    C_{\mathrm{abs}}\e^{-\theta_3L}.
\end{align}
\end{proposition}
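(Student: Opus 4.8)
The statement to prove is Proposition \ref{prop:mono1}, which asserts the monotonicity (up to small error) of the localized functional $\mathcal{J}(t)$. The strategy is to differentiate $\mathcal{J}(t)$ in time using the Virial-type identities from Lemma \ref{lem:virial}, then to split the resulting integrals into the region close to the two solitary waves (where the solitons dominate) and the region near the cutoff line $x = \bar{x}^0 + \mu t$ (where $\eps$ and interaction terms dominate), estimate each contribution, and finally integrate in time using $a = L^2/64$.

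\textbf{Step 1: Differentiate $\mathcal{J}(t)$.} Apply \eqref{viri:1} with $\triangle\omega = \omega_2(0)-\omega_1(0)$, $\triangle c = c_2(0)-c_1(0)$, and $g = \varphi$. By the choice $\mu = 2\frac{\omega_2(0)-\omega_1(0)}{c_2(0)-c_1(0)}$, the first-order term $\frac{1}{\sqrt{t+a}}\int(-\triangle c\,|u_x|^2 + (\triangle\omega + \frac{\mu\triangle c}{2})\Im(\bar uu_x) - \frac{\mu\triangle\omega}{2}|u|^2)\varphi'$ becomes (up to rescaling) a quadratic form in $(u_x, u)$ whose symbol is, after completing the square, $-\triangle c\,|u_x - \frac{\i\mu}{2}u|^2 + (\frac{\mu^2 \triangle c}{4} + \frac{\mu\triangle\omega}{2} - \frac{\mu\triangle\omega}{2})|u|^2 = -\triangle c|u_x - \frac{\i\mu}{2}u|^2$ — here the technical assumption $(b)$, $\triangle\omega/\triangle c = \mu/2 > 0$, i.e. $\triangle c > 0$ (WLOG), forces this leading term to have the \emph{good} (non-positive) sign. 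The remaining terms carry explicit decay factors $(t+a)^{-1}$ or $(t+a)^{-3/2}$ in front, plus the nonlinear pieces $\frac{\triangle c}{2\sqrt{t+a}}\Im\int|u|^{2\sigma}\bar uu_x\,\varphi'$ and $\frac{\triangle\omega}{2(\sigma+1)\sqrt{t+a}}\int|u|^{2\sigma+2}\varphi'$.

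\textbf{Step 2: Localize and estimate.} Insert the decomposition $u(t) = R_1(t) + R_2(t) + \eps(t)$ from Lemma \ref{lem:mod2}. One chooses $\varphi$ so that $\varphi' \geq 0$ is supported in a neighborhood of $0$; then $\varphi'\big(\tfrac{x - \bar x^0 - \mu t}{\sqrt{t+a}}\big)$ is supported near the line $x = \bar x^0 + \mu t$, which by \eqref{dist:center} and the relative-speed assumptions $(c)$ (equivalently $c_1^0 < \mu/2 < c_2^0$ after the reduction noted in the remark, here stated with the margin $4\mu' < c_2^0$, $c_1^0 < \mu'$) stays at distance $\gtrsim L + \theta_3 t$ from \emph{both} soliton centers $x_1(t), x_2(t)$. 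Hence on the support of $\varphi'$ the soliton terms $R_k$ and all their $x$-derivatives are bounded by $C\,\e^{-\theta_3(L+\theta_3 t)}$ by the exponential decay in \eqref{phi}. The cross terms linear and quadratic in $R_k$ are thus $O(\e^{-\theta_3(L+\theta_3 t)})$; the terms quadratic in $\eps$ give the $\|\eps(t)\|_{H^1}^2$ contribution once the $(t+a)^{-1/2}$ is absorbed into $(t+a)^{-3/2}$ using $(t+a)^{-1/2} \leq (t+a)^{-3/2}(t+a) $ — no, rather one keeps the $\frac{1}{\sqrt{t+a}}\int|\partial_x\eps|^2\varphi'$ term: here the crucial point is that this is $\leq \frac{C}{(t+a)^{3/2}}\|\eps\|_{H^1}^2$ only if one does \emph{not} throw it away, but since $-\triangle c < 0$ the genuinely dangerous $\int|u_x|^2\varphi'$ piece has the good sign and can simply be discarded; what survives with a bad sign are lower-order-in-decay terms multiplied by $(t+a)^{-3/2}$ (from the $\Lambda g$ and $g'''$ terms) and the nonlinear terms, which by Gagliardo–Nirenberg and the smallness $\|\eps\|_{H^1} < C\alpha$ are controlled by $\frac{C}{(t+a)^{3/2}}\|\eps\|_{H^1}^2 + \frac{C}{(t+a)^{3/2}}\e^{-\theta_3(L+\theta_3 t)}$ (splitting $|u|^{2\sigma}$ between soliton and $\eps$ contributions). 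This yields the differential inequality.

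\textbf{Step 3: Integrate in time.} Integrating $\frac{\d}{\d t}\mathcal{J}(t) \leq \frac{C}{(t+a)^{3/2}}\|\eps(t)\|_{H^1}^2 + \frac{C}{(t+a)^{3/2}}\e^{-\theta_3(L+\theta_3 t)}$ over $[0,t]$ and using $\int_0^\infty (s+a)^{-3/2}\,\d s = 2a^{-1/2} = 16/L$ together with $\int_0^\infty (s+a)^{-3/2}\e^{-\theta_3\theta_3 s}\,\d s \leq a^{-3/2}\cdot C \leq C a^{-1/2}\cdot a^{-1} \leq \frac{C}{L}$ and, more sharply, $\e^{-\theta_3(L+\theta_3 s)} \leq \e^{-\theta_3 L}$, gives $\mathcal{J}(t) - \mathcal{J}(0) \leq \frac{C}{L}\sup_{0<s<t}\|\eps(s)\|_{H^1}^2 + C\e^{-\theta_3 L}$, which is \eqref{mono1}.

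\textbf{Main obstacle.} The delicate point is the sign bookkeeping in Step 1–2: one must verify that, after completing the square, the $(t+a)^{-1/2}$-order term is genuinely non-positive (this is exactly where assumptions $(b)$ and $(c)$ enter), so that the only terms competing against the monotonicity are those carrying at least one extra power of $(t+a)^{-1}$ and are therefore time-integrable against the $L^{-1}$ bound. Keeping careful track of which terms have the good sign versus which must be estimated — and ensuring the nonlinear terms $\int|u|^{2\sigma}\bar uu_x\varphi'$ and $\int|u|^{2\sigma+2}\varphi'$, which have \emph{no} extra decay in $(t+a)$, are nonetheless small because on $\mathrm{supp}\,\varphi'$ either $|u| \approx |\eps|$ is small or the soliton part is exponentially small — is the technical heart of the argument.
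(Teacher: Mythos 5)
Your overall route is the same as the paper's: differentiate $\mathcal{J}$ via the Virial identity \eqref{viri:1}, observe that the choice $\mu=2\frac{\omega_2(0)-\omega_1(0)}{c_2(0)-c_1(0)}$ makes the $(t+a)^{-1/2}$-order quadratic part equal to $-\triangle c\int\abs{u_x-\tfrac{\i\mu}{2}u}^2\varphi'$ (the paper implements exactly this completion of the square by the substitution $v=u\e^{-\i\mu x/2}$), control the remaining terms using the exponential smallness of the solitons on $\supp\varphi'$ (Lemma \ref{lem:p est}) and a localized Gagliardo--Nirenberg bound, and integrate in time using $a=L^2/64$ so that $\int_0^\infty(s+a)^{-3/2}\d s=16/L$. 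Two bookkeeping remarks: the good-sign term $-\frac{1}{\sqrt{t+a}}\int\abs{v_x}^2\varphi'$ cannot "simply be discarded" at the outset --- it must be retained to absorb the $\int\abs{v_x}^2\varphi'$ pieces produced by Cauchy--Schwarz on the $\Lambda\varphi$ term \eqref{1} and on the derivative nonlinearity \eqref{2}; and the gain of the extra factor $(t+a)^{-1}$ on the term $\frac{1}{\sqrt{t+a}}\Im\int\abs{v}^{2\sigma}\bar v v_x\varphi'$ comes specifically from the localized $L^\infty$ estimate for $\abs{v}^2\sqrt{\varphi'}$ (whose second term carries $\frac{1}{t+a}$ from differentiating the rescaled cutoff) combined with the smallness $\int_{\Omega_w}\abs{v}^2\leq\frac{1}{32}$, not from Gagliardo--Nirenberg and smallness of $\norm{\eps}_{H^1}$ alone.

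The genuine gap is your treatment of the term $-\frac{\sigma\mu}{4\sqrt{t+a}}\int\abs{v}^{2\sigma+2}\varphi'$ (i.e. \eqref{3}). In Step 2 you claim the nonlinear pieces, including $\int\abs{u}^{2\sigma+2}\varphi'$, "are controlled by $\frac{C}{(t+a)^{3/2}}\norm{\eps}_{H^1}^2+\frac{C}{(t+a)^{3/2}}\e^{-\theta_3(L+\theta_3t)}$". For this particular term that is false: the best one can get by estimation is of order $\frac{C}{\sqrt{t+a}}\bigl(\norm{\eps}_{H^1}^{2\sigma+2}+\e^{-\theta_3(L+\theta_3t)}\bigr)$ (or $\frac{C}{t+a}(\cdots)$ after partially absorbing into the kinetic term), and neither prefactor is integrable on $[0,\infty)$, so Step 3 would produce a bound growing like $\sqrt{t}$ rather than \eqref{mono1}. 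This term cannot be estimated at all; it must be discarded using its sign, which is exactly where the technical assumption $(b)$ enters: $\mu>0$ and $\varphi'\geq 0$ give $-\frac{\sigma\mu}{4\sqrt{t+a}}\int\abs{v}^{2\sigma+2}\varphi'\leq 0$. Your "Main obstacle" paragraph gestures at the correct sign mechanism for the $(t+a)^{-1/2}$-order terms, but the explicit estimate you assert in Step 2 contradicts it and, taken literally, breaks the proof.
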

Before the proof of Proposition \ref{prop:mono1}, we fist give the following estimate.
\begin{lemma}
\label{lem:p est}
    Let $\Omega_{w}:=\set{x\in\R }{\abs{ x-\bar{x}^0-\mu t }<\sqrt{t+a}}, $ then for any $2\leq p<\infty$, we have
    \begin{align}
    \label{esti:wkdom}
      \int_{ \Omega_{w} }\abs{u}^p
      \leq
      C_{\mathrm{abs}}\e^{-\theta_3\sts{ L+\theta_3 t }}+ C_{\mathrm{abs}}\norm{\eps\sts{t}}_{H^1}^p
      \leq \frac{1}{32},
    \end{align}
    where
    $
      \theta_3
    $ is given by Proposition \ref{pro:mono1}.
\end{lemma}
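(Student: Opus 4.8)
The plan is to estimate $\int_{\Omega_w}\abs{u}^p$ by splitting $u$ into the two solitary wave profiles $R_1,R_2$ and the radiation $\eps$ via the decomposition \eqref{decom1}. On the strip $\Omega_w$, the crucial geometric fact is that the center line $\bar x^0+\mu t$ is uniformly far (at least of order $L+\theta_3 t$) from both soliton centers $x_1(t)$ and $x_2(t)$: this follows from the choice $\mu = 2\frac{\omega_2(0)-\omega_1(0)}{c_2(0)-c_1(0)}$ together with the relative speed assumption $(c)$ of Theorem \ref{mainthm}, the estimate \eqref{eq:5} controlling $\abs{\dot x_k-c_k}$, and the parameter smallness \eqref{dynamic:est:ift}, so that $x_1(t)$ drifts slower than $\mu t$ while $x_2(t)$ drifts faster. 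Once this separation is established, the exponential spatial decay of $Q_{\omega_j,c_j}$ (controlled uniformly by \eqref{dynamic:tech:nondeg}, giving decay rate comparable to $\sqrt{4\omega_j^0-(c_j^0)^2}$) yields $\int_{\Omega_w}\abs{R_j}^p \leq C_{\mathrm{abs}}\e^{-\theta_3(L+\theta_3 t)}$ for $j=1,2$.

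The steps I would carry out, in order, are: first, verify the distance estimates $\abs{x-x_j(t)}\gtrsim L+\theta_3 t$ for $x\in\Omega_w$ and $j=1,2$, using $\abs{x-\bar x^0-\mu t}<\sqrt{t+a}=\sqrt{t+L^2/64}$ and the monotone drift of $x_2-x_1$ and of $x_j-(\bar x^0+\mu t)$; second, insert $u=R_1+R_2+\eps$ and apply the elementary inequality $\abs{a+b+c}^p\leq C_p(\abs{a}^p+\abs{b}^p+\abs{c}^p)$; third, bound $\int_{\Omega_w}\abs{R_j}^p$ by the exponential-decay computation just mentioned; fourth, bound $\int_{\Omega_w}\abs{\eps}^p\leq\norm{\eps}_{L^\infty}^{p-2}\norm{\eps}_{L^2}^2\leq C_{\mathrm{abs}}\norm{\eps(t)}_{H^1}^p$ by Sobolev embedding $H^1(\R)\hookrightarrow L^\infty(\R)$; fifth, combine and use $\norm{\eps(t)}_{H^1}<C_{\ift}\alpha$ from \eqref{dynamic:est:ift} together with $\e^{-\theta_3(L+\theta_3 t)}\leq \e^{-\theta_3 L}$ and the bootstrap smallness of $\alpha$ and largeness of $L$ to conclude the final bound $\leq 1/32$.

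The main obstacle is the first step: carefully tracking the soliton centers relative to the moving line $\bar x^0+\mu t$ so that the exponent $\theta_3$ comes out with the stated constants $4(\mu-c_1^0)$ and $4(c_2^0-\mu)$. One must show that for $x\in\Omega_w$, $x-x_1(t)\geq$ (something growing like $(\mu-c_1^0)t + cL$) and $x_2(t)-x\geq$ (something like $(c_2^0-\mu)t + cL$); here the relative-speed hypothesis $c_1^0 < \mu/2$ (equivalently $c_1^0<\frac{\omega_2^0-\omega_1^0}{c_2^0-c_1^0}$, noting $\mu=2\frac{\omega_2^0-\omega_1^0}{c_2^0-c_1^0}$ modulo the $O(\alpha)$ discrepancy between $\omega_j(0),c_j(0)$ and $\omega_j^0,c_j^0$) and $\mu/2 < c_2^0$ are exactly what guarantee positivity of both gaps. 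The $\sqrt{t+a}$ width of $\Omega_w$ is absorbed since it grows only like $\sqrt{t}$, dominated by the linear-in-$t$ separation, while the $\e^{-\theta_3^2 t}$ factor in the statement accounts for this sublinear correction; the clean way to see it is to note $\e^{-c(\text{linear in }t)}\cdot(\text{width})\lesssim \e^{-c'(\text{linear in }t)}$ after shrinking the rate. Everything else is routine once the geometry is pinned down.
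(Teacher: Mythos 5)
Your proposal follows essentially the same route as the paper's proof: decompose $u=R_1+R_2+\eps$, show via the modulation estimates \eqref{dynamic:est:ift}--\eqref{dynamic:para:contr} that $\bar{x}^0+\mu t - x_1(t)$ and $x_2(t)-(\bar{x}^0+\mu t)$ grow linearly at rates $\tfrac{\mu-c_1^0}{2}$ and $\tfrac{c_2^0-\mu}{2}$ so that $\Omega_w$ stays far from both soliton centers (absorbing the $\sqrt{t+a}$ width into the linear growth for $L$ large), use the exponential decay of $Q_{\omega_j,c_j}$ for the soliton contributions, and the Sobolev embedding $H^1\hookrightarrow L^\infty$ for the $\eps$ contribution. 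The argument and the key estimates match; no gap.
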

\begin{proof}
By Lemma \ref{lem:mod2}, the solution $u\sts{t}$ can be decomposed as
 \begin{align*}
  u\sts{t}= \sum_{j=1}^{2} Q_j\sts{\cdot-x_j\sts{t}}\e^{\i\gamma_j\sts{t}} + \eps\sts{t},
\end{align*}
and \eqref{orth:dyn}-\eqref{dynamic:para:contr} hold. Then we have
\begin{align}
\label{9}
  \int_{ \Omega_{w} }\abs{u}^p
  \leq
  \sum_{j=1}^{2} C_{\mathrm{abs}}\int_{ \Omega_{w} }\abs{Q_j\sts{\cdot-x_j\sts{t}}}^p
  +
  C_{\mathrm{abs}}\int_{ \Omega_{w} }\abs{ \eps }^p.
\end{align}
We first estimate the contribution from $Q_1$. If $x\in \Omega_{w} $, we obtain $\abs{x-\overline{x}^{0}-\mu
t}<\sqrt{t+a}\leqslant \sqrt{t} + \frac{L}{8},$ and
\begin{align}
\label{eq:tw1:dist} \notag
    \abs{ x-x_{1}\sts{t} }
&
=
    \abs{ \sts{ x-\overline{x}^{0}-\mu t } -\sts{ x_{1}\sts{t} -\overline{x}^{0}-\mu t } }
\\
\notag
&
\geqslant
    \abs{ x_{1}\sts{t} -\overline{x}^{0}-\mu t } - \abs{ x-\overline{x}^{0}-\mu t }
\\
&
\geqslant
    \abs{ x_{1}\sts{t} -\overline{x}^{0}-\mu t } - \sqrt{t} - \frac{L}{8}.
\end{align}
By \eqref{dynamic:est:ift} and \eqref{dynamic:para:contr}, we have for
sufficiently small $\alpha$ and sufficiently large $L$ that
\begin{align*}
    \frac{d}{dt}\sts{\overline{x}^{0}+\mu t - x_{1}\sts{t}}
& =
    \mu - \sts{ \dot{x}_{1}\sts{t} - c_{1}\sts{t} } - c_{1}\sts{t}
\\
&
\geqslant
    \mu - C_{\mathrm{abs}}\left(C_{\ift}\alpha +
                e^{ -\theta_2 \sts{ L + \theta_2 t } } \right) - c_{1}^0 - C_{\ift}~\alpha
\\
&
\geqslant
    \frac{\mu - c_{1}^0}{2},
\end{align*}
and so,
\begin{align*}
    \overline{x}^{0}+\mu t - x_{1}\sts{t}
 \geqslant
    \overline{x}^{0} - x_{1}\sts{0} + \frac{ \mu - c_{1}^0}{2}\;t
\geqslant
    \frac{L}{4} + \frac{\mu - c_{1}^0}{2}\;t.
\end{align*}
Now inserting the above estimate into \eqref{eq:tw1:dist},
 we obtain for sufficiently small $\alpha$ and sufficiently large $L$ that
\begin{align*}
    \abs{ x-x_{1}\sts{t} }
 & \geqslant
     \overline{x}^{0}  +\mu t - x_{1}\sts{t}   - \sqrt{t} -
     \frac{L}{8}
\\
&
\geqslant
       \frac{L}{16} + \frac{\mu - c_{1}^0}{4}\;t
    +
    \sts{ \frac{\mu - c_{1}^0}{4}\;t - \sqrt{t} + \frac{L}{16}}
\\
&
\geqslant
    \frac{L}{16} + \frac{\mu - c_{1}^0}{4}\;t.
\end{align*}
Then, it follows from the explicit expression of $Q_{\omega,c}$ that
\begin{align}
\label{7}
  \int_{\Omega_w}\abs{ Q_1\sts{ \cdot-x_1\sts{t} } }^p
  \leq C_{\mathrm{abs}} \e^{-\theta_3\sts{ L+\theta_3 t }},
\end{align}
where we used the fact that $p\geq 2$. By the similar argument, we have
\begin{align}
\label{8}
  \int_{\Omega_w}\abs{ Q_2\sts{ \cdot-x_2\sts{t} } }^p
  \leq C_{\mathrm{abs}} \e^{-\theta_3\sts{ L+\theta_3 t }}.
\end{align}
By \eqref{7} and \eqref{8} and the Sobolev inequality, we have
\begin{align*}
   \int_{\Omega_w}\abs{u}^p
    \leq
    &
    C_{\mathrm{abs}}\e^{-\theta_3\sts{ L+\theta_3 t }} + C_{\mathrm{abs}}\norm{\eps\sts{t}}_{H^1}^p.
\end{align*}
This concludes the proof.
\end{proof}
Now, let us prove Proposition \ref{prop:mono1}.
\begin{proof}[Proof of Proposition \ref{prop:mono1}]
First of all, let
$$
  v\sts{t,x}:=u\sts{t,x}\e^{-\i\frac{1 }{ 2 }\mu x},
$$ then we have
\begin{gather*}
    -\abs{u_x}^2 +   \mu \Im\sts{\bar{u}u_x} - \frac{\mu^2}{4}\abs{u}^2
  =
    \frac{1}{4}\Im\sts{\bar{v}v_x},
\\
    -\frac{1}{2}\Im\sts{ \abs{u}^{2\sigma}\bar{u}u_x }
  =
   -\frac{1}{2}\abs{v}^{2\sigma}\Im\sts{ \bar{v}v_x } -\frac{ \mu }{4}\abs{ v }^{2\sigma+2 }.
   \end{gather*}
Simple calculations yield that
\begin{align}
\label{17}
 & \frac{1}{c_2(0)-c_1(0)}\frac{\d}{\d t}  \mathcal{J}\sts{t}
  \\
  = &  -\frac{1}{\sqrt{t+a}}\int\abs{v_x}^2\varphi'\sts{\frac{ x-\bar{x}^0-\mu t}{ \sqrt{t+a} }}
        +\frac{1}{ 4(t+a)^{3/2} }\int\abs{v}^2~\varphi'''\sts{\frac{ x-\bar{x}^0-\mu t}{ \sqrt{t+a} }} \notag
  \\
  \label{1}
  &
        + \frac{1}{4\sts{t+a}}\Im\int\bar{v}v_x \Lambda\varphi\sts{\frac{ x-\bar{x}^0-\mu t}{ \sqrt{t+a} }}
  \\
  \label{2}
  &
        -\frac{1}{2\sqrt{t+a} }\Im\int \abs{v}^{2\sigma}\bar{v}v_x~\varphi'\sts{\frac{ x-\bar{x}^0-\mu t}{ \sqrt{t+a} }}
  \\
  \label{3}
  &
        -\frac{\sigma \mu }{4\sqrt{t+a}}\int\abs{v}^{2\sigma+2}~\varphi'\sts{\frac{ x-\bar{x}^0-\mu t}{ \sqrt{t+a} }}.
\end{align}

Next, we estimate \eqref{1}-\eqref{3} separately.

  {\textbf{\textit{Estimate for \eqref{1}.}}} The definition of $ \Lambda\varphi $ immediately implies that
  $ \abs{ \Lambda\varphi }\leq\varphi' ,$ which together with the Cauchy-Schwarz inequality, yields that
    \begin{align}
    \label{16}
    \notag
      & \abs{ \frac{1}{4\sts{t+a}}\Im\int\bar{v}v_x \Lambda\varphi\sts{\frac{ x-\bar{x}^0-\mu t}{ \sqrt{t+a} }} }\\
            \leq
      &
       \frac{1}{4\sts{t+a}} \sqrt{ \int\abs{v_x}^2~\varphi'\sts{\frac{ x-\bar{x}^0-\mu t}{ \sqrt{t+a} }}\cdot \int\abs{v}^2~\varphi'\sts{\frac{ x-\bar{x}^0-\mu t}{ \sqrt{t+a} }}  }
     \notag \\
      \leq
      &
        \frac{1}{4\sqrt{t+a}} \int\abs{v_x}^2~\varphi'\sts{\frac{ x-\bar{x}^0-\mu t}{ \sqrt{t+a} }}
        +
        \frac{1}{4(t+a)^{3/2}} \int\abs{v}^2~\varphi'\sts{\frac{ x-\bar{x}^0-\mu t}{ \sqrt{t+a} }}
    \end{align}

    {\textbf{\textit{Estimate for \eqref{2}.}}}
    Applying the Cauchy-Schwarz inequality, we have
    \begin{align}
 \notag
      & \abs{ \frac{1}{2\sqrt{t+a} }\Im\int \abs{v}^{2\sigma}\bar{v}v_x~\varphi'\sts{\frac{ x-\bar{x}^0-\mu t}{ \sqrt{t+a} }} } \\
      \leq
      &
      \frac{1}{2\sqrt{t+a} }
        \sqrt{ \int \abs{v}^{4\sigma+2}~\varphi'\sts{\frac{ x-\bar{x}^0-\mu t}{ \sqrt{t+a} }}\cdot \int \abs{v_x}^2 ~\varphi'\sts{\frac{ x-\bar{x}^0-\mu t}{ \sqrt{t+a} }} }
     \notag \\
      \leq
      &
      \frac{1}{4\sqrt{t+a}}\int \abs{v_x}^2 ~\varphi'\sts{\frac{ x-\bar{x}^0-\mu t}{ \sqrt{t+a} }}
      +
      \frac{1}{\sqrt{t+a}}\int \abs{v}^{4\sigma+2}~\varphi'\sts{\frac{ x-\bar{x}^0-\mu t}{ \sqrt{t+a} }}.    \label{13}
    \end{align}
    By the H\"{o}lder inequality, we have
    \begin{align*}
          \int\abs{v}^{4\sigma+2}~\varphi'\sts{\frac{ x-\bar{x}^0-\mu t}{ \sqrt{t+a} }}
                 \leq
                 \left\| \abs{v}^2\sqrt{ \varphi'\sts{\frac{ x-\bar{x}^0-\mu t}{ \sqrt{t+a} }} } \right\|_{L^\infty}^2
            \int_{\supp{\varphi'}}\abs{v}^{4\sigma-2}.
    \end{align*}
    By the  Sobolev inequality in Lemma 5.2 in \cite{MiaoTX:DNLS:Stab} and Lemma \ref{lem:p est}, we have
    \begin{align*}
          \left\| \abs{v}^2\sqrt{ \varphi'(\frac{ x-\bar{x}^0-\mu t}{ \sqrt{t+a} }) } \right\|_{L^\infty}^2
          \leq
          &
            8\int \abs{v_x}^2~\varphi'\int_{\supp{\varphi'}}\abs{u}^2
            +
            \frac{1}{2(t+a)}
            \int \abs{v}^2\frac{ \varphi''^2 }{ \varphi' }\int_{\supp{\varphi'}}\abs{v}^2
          \\
          \leq
          &
              8\int_{\supp{\varphi'}}\abs{v}^2    \int \abs{v_x}^2~\varphi'
            +
              \frac{1}{2(t+a)}\int_{\supp{\varphi'}}\abs{v}^2
            \int \abs{v}^2\frac{ \varphi''^2 }{ \varphi' }
            \\
           \leq &   8\int_{\supp{\varphi'}}\abs{v}^2    \int \abs{v_x}^2~\varphi'
            +\frac{1}{2(t+a)}
            \sts{\int_{\supp{\varphi'}}\abs{v}^2 }^2
            \\
             \leq & \frac14   \int \abs{v_x}^2~\varphi'
            + \frac{C_{\mathrm{abs}}}{t+a},
    \end{align*}
    which implies that
    \begin{multline*}
        \frac{1}{\sqrt{t+a}} \int\abs{v}^{4\sigma+2}~\varphi'\sts{\frac{ x-\bar{x}^0-\mu t}{ \sqrt{t+a} }}
           \\
           \leq
          \frac{1}{4\sqrt{t+a}}\int \abs{v_x}^2~\varphi'\sts{\frac{ x-\bar{x}^0-\mu t}{ \sqrt{t+a} }} + \frac{C_{\mathrm{abs}}}{ (t+a)^{3/2} }
          \sts{ \e^{-\theta_3\sts{ L+\theta_3 t }}+ \norm{\eps\sts{t}}_{H^1}^{4\sigma-2} }
    \end{multline*}
    Now inserting the above estimate into \eqref{13}, we have
    \begin{multline}
    \label{15}
      \abs{ \frac{1}{2\sqrt{t+a} }\Im\int \abs{v}^{2\sigma}\bar{v}v_x~\varphi'\sts{\frac{ x-\bar{x}^0-\mu t}{ \sqrt{t+a} }} }
      \\
      \leq
      \frac{1}{2\sqrt{t+a}}\int \abs{v_x}^2 ~\varphi'\sts{\frac{ x-\bar{x}^0-\mu t}{ \sqrt{t+a} }}
      +
      \frac{C_{\mathrm{abs}}}{ (t+a)^{3/2} }
      \sts{ \e^{-\theta_3\sts{ L+\theta_3 t }}+ \norm{\eps\sts{t}}_{H^1}^{4\sigma-2} }.
    \end{multline}

    {\textbf{\textit{Estimate for \eqref{3}.}}} By $\mu >0$, $\varphi'>0$, we have
    \begin{align}
    \label{18}
      -\frac{\sigma\mu}{4\sqrt{t+a}}\int\abs{v}^{2\sigma+2}~\varphi'\sts{\frac{ x-\bar{x}^0-\mu t}{ \sqrt{t+a} }}\leq 0.
    \end{align}

Inserting \eqref{16}, \eqref{15} and \eqref{18} into \eqref{17}, we can obtain the result.
\end{proof}

\subsection{Monotone result for different lines}

Now let $$ \theta_4:=\min\ltl{~\frac{ \sqrt{4\omega_1^0-\sts{c_1^0}^2} }{64},~\frac{ \sqrt{4\omega_2^0-\sts{c_2^0}^2} }{64},~\mu-2c_1^0,~4c_2^0-2\mu~}\leq \theta_3,$$
\begin{align*}
  \mu_{+,0}= \mu_{0,-}=4\frac{\omega_2(0)-\omega_1(0)}{c_2(0)-c_1(0)},\quad
  \mu_{-,0}=\mu_{0,+}=\frac{\omega_2(0)-\omega_1(0)}{c_2(0)-c_1(0)},
\end{align*}

\begin{align*}
    \phi_{\pm,0}\sts{t,x}:=\varphi\sts{ \frac{x-\bar{x}^0 -\mu_{\pm,0} t }{ \sqrt{t+a} } },
    & \quad
    \phi_{0,\pm}\sts{t,x}:=\varphi\sts{ \frac{x-\bar{x}^0 -\mu_{0,\pm} t }{ \sqrt{t+a} } },
    \end{align*}
    and define
\begin{align*}
  \mathcal{J}_{+,0}(t) &= \left(\omega_2(0)-\omega_1(0)\right)\int\abs{u(t,x)}^2~\phi_{+,0}(t,x) - \frac{c_2(0)-c_1(0)}{2}\Im\int\bar{u}u_x~\phi_{+,0}(t,x),
  \\
  \mathcal{J}_{-,0}(t) &= \frac{\left(\omega_2(0)-\omega_1(0)\right)}{4}\int\abs{u(t,x)}^2~\phi_{-,0}(t,x) - \frac{c_2(0)-c_1(0)}{2}\Im\int\bar{u}u_x~\phi_{-,0}(t,x),
  \\
  \mathcal{J}_{0,+}(t) &= \frac{\left(\omega_2(0)-\omega_1(0)\right)}{2}\int\abs{u(t,x)}^2~\phi_{0,+}(t,x) - \left(c_2(0)-c_1(0)\right)\Im\int\bar{u}u_x~\phi_{0,+}(t,x),
  \\
  \mathcal{J}_{0,-}(t) &= \frac{\left(\omega_2(0)-\omega_1(0)\right)}{2}\int\abs{u(t,x)}^2~\phi_{0,-}(t,x) - \frac{c_2(0)-c_1(0)}{4}\Im\int\bar{u}u_x~\phi_{0,-}(t,x).
\end{align*}
By the analogue proof as that in Proposition \ref{prop:mono1}, we have
\begin{corollary}
\label{coro:mono2}
  Let $u\in\mathcal{C}\sts{ \left[0,T^*\right], H^1 }$
  be a solution satisfying the assumption of Lemma \ref{lem:mod2}.
  Then, there exists $C_{\mathrm{abs}}$ such that
  \begin{align*}
      \mathcal{J}_{\pm,0}\sts{t}-\mathcal{J}_{\pm,0}\sts{0}
        &
        \leqslant
        \frac{C_{\mathrm{abs}}}{ L }\sup_{0<s<t}\norm{\eps\sts{s}}_{H^1}^2
        +
        C_{\mathrm{abs}}\e^{-\theta_4L},
      \\
      \mathcal{J}_{0,\pm}\sts{t}-\mathcal{J}_{0,\pm}\sts{0}
        &
        \leqslant
        \frac{C_{\mathrm{abs}}}{ L }\sup_{0<s<t}\norm{\eps\sts{s}}_{H^1}^2
        +
        C_{\mathrm{abs}}\e^{-\theta_4L}.
  \end{align*}
\end{corollary}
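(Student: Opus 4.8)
The plan is to reprise, essentially verbatim, the mechanism behind Proposition~\ref{prop:mono1} — apply the Virial-type identity \eqref{viri:1}, gauge away the drift, estimate the resulting terms, and integrate in time — now carried out along each of the four lines $\bar{x}^0+\mu_{\pm,0}t$ and $\bar{x}^0+\mu_{0,\pm}t$ in place of the single line $\bar{x}^0+\mu t$. First I would observe that each of $\mathcal{J}_{+,0},\mathcal{J}_{-,0},\mathcal{J}_{0,+},\mathcal{J}_{0,-}$ is exactly of the form appearing on the left-hand side of \eqref{viri:1}, with $\bar{x}^0=(x_1^0+x_2^0)/2$, $a=L^2/64$, the velocity $\mu$ replaced by the corresponding $\mu_{\pm,0}$ or $\mu_{0,\pm}$, and the pair $(\triangle\omega,\triangle c)$ read off from its coefficients (for instance $(\triangle\omega,\triangle c)=\bigl(2(\omega_2(0)-\omega_1(0)),\,c_2(0)-c_1(0)\bigr)$ for $\mathcal{J}_{+,0}$, and analogously for the other three). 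The decisive point is that in every case the relation $\mu=2\triangle\omega/\triangle c$ — the same relation enjoyed by $\mathcal{J}$ — continues to hold, so that after the gauge change $v(t,x):=u(t,x)\e^{-\i\mu x/2}$ the derivative $\frac{\d}{\d t}\mathcal{J}_{\cdot,\cdot}(t)$ takes exactly the form \eqref{17}: a negative leading term $-\frac{\triangle c}{\sqrt{t+a}}\int\abs{v_x}^2\varphi'$, a $\Lambda\varphi$-term, a term $-\frac{\triangle c}{2\sqrt{t+a}}\Im\int\abs{v}^{2\sigma}\bar v v_x\varphi'$, and a top-order term proportional to $-\sigma\mu_{\cdot,\cdot}\frac{\triangle c}{\sqrt{t+a}}\int\abs{v}^{2\sigma+2}\varphi'$. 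Since $\triangle c=c_2(0)-c_1(0)>0$, $\varphi'\geq 0$, and — by hypothesis $(c)$ of Theorem~\ref{mainthm} together with \eqref{dynamic:est:ift} — each of the velocities $\mu_{\pm,0},\mu_{0,\pm}$ is strictly positive, this top-order term has the good sign, exactly as term \eqref{3} did.

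The step I expect to be the main obstacle is re-establishing the localized smallness of Lemma~\ref{lem:p est}, i.e. $\int_{\Omega_w}\abs{u}^p\leq C_{\mathrm{abs}}\e^{-\theta_4(L+\theta_4 t)}+C_{\mathrm{abs}}\norm{\eps(t)}_{H^1}^p$, for each of the four shifted lines with the common rate $\theta_4$. This is a matter of geometric bookkeeping: one must check that every line $\bar{x}^0+\mu_{\pm,0}t$, $\bar{x}^0+\mu_{0,\pm}t$ remains trapped strictly between the soliton trajectories $x_1(t)$ and $x_2(t)$, so that on the moving support of $\varphi'$ one has $\abs{x-x_1(t)}\gtrsim \frac{L}{16}+\theta_4 t$ and $\abs{x-x_2(t)}\gtrsim \frac{L}{16}+\theta_4 t$. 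Since $\mu_{+,0}=\mu_{0,-}=4\frac{\omega_2(0)-\omega_1(0)}{c_2(0)-c_1(0)}$ and $\mu_{-,0}=\mu_{0,+}=\frac{\omega_2(0)-\omega_1(0)}{c_2(0)-c_1(0)}$, this is precisely what the relative-speed hypothesis $c_1^0<\frac{\omega_2^0-\omega_1^0}{c_2^0-c_1^0}$, $4\frac{\omega_2^0-\omega_1^0}{c_2^0-c_1^0}<c_2^0$ guarantees (up to the modulation errors $\abs{\omega_j(0)-\omega_j^0}+\abs{c_j(0)-c_j^0}<C_{\ift}\alpha$ from Lemma~\ref{lem:mod2}), and it is exactly why $\theta_4$ was defined to include $\mu-2c_1^0$ and $4c_2^0-2\mu$. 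Granting these separation estimates, the argument of Lemma~\ref{lem:p est} goes through line by line with $\theta_4$ in place of $\theta_3$: the distance computations following \eqref{eq:tw1:dist}, the explicit exponential decay of $\Phi_{\omega,c}$ combined with $\sqrt{4\omega_j-c_j^2}>\frac12\sqrt{4\omega_j^0-(c_j^0)^2}$ from \eqref{dynamic:tech:nondeg}, and a Sobolev inequality to pass from $\sum_j\int_{\Omega_w}\abs{Q_j}^p$ to $\int_{\Omega_w}\abs{u}^p$.

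With this analogue of Lemma~\ref{lem:p est} available, the remaining terms are estimated verbatim as in \eqref{13}--\eqref{18}: bound $\Im\int\abs{v}^{2\sigma}\bar v v_x\varphi'$ by Cauchy--Schwarz, split $\int\abs{v}^{4\sigma+2}\varphi'$ by Hölder into $\bigl\|\abs{v}^2\sqrt{\varphi'}\bigr\|_{L^\infty}^2\int_{\supp{\varphi'}}\abs{v}^{4\sigma-2}$, control the $L^\infty$ factor with Lemma~5.2 of \cite{MiaoTX:DNLS:Stab} and the new localized estimate, absorb each $\int\abs{v_x}^2\varphi'$ contribution into the negative leading term, and use $\sigma\in(1,2)$ (so $4\sigma-2>2$) with $\norm{\eps(t)}_{H^1}<C_{\ift}\alpha$ to replace $\norm{\eps}_{H^1}^{4\sigma-2}$ by $\norm{\eps}_{H^1}^2$; the $\Lambda\varphi$- and $\varphi'''$-terms are disposed of as in \eqref{16}. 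This produces $\frac{\d}{\d t}\mathcal{J}_{\cdot,\cdot}(t)\leq \frac{C_{\mathrm{abs}}}{(t+a)^{3/2}}\norm{\eps(t)}_{H^1}^2+\frac{C_{\mathrm{abs}}}{(t+a)^{3/2}}\e^{-\theta_4(L+\theta_4 t)}$, and then integrating over $[0,t]$ and using $a=L^2/64$ — so that $\int_0^t(s+a)^{-3/2}\d s\leq 2a^{-1/2}=16/L$ and $\int_0^t(s+a)^{-3/2}\e^{-\theta_4(L+\theta_4 s)}\d s\leq C_{\mathrm{abs}}\e^{-\theta_4 L}$ — yields $\mathcal{J}_{\cdot,\cdot}(t)-\mathcal{J}_{\cdot,\cdot}(0)\leq \frac{C_{\mathrm{abs}}}{L}\sup_{0<s<t}\norm{\eps(s)}_{H^1}^2+C_{\mathrm{abs}}\e^{-\theta_4 L}$, which is the assertion for both $\mathcal{J}_{\pm,0}$ and $\mathcal{J}_{0,\pm}$.
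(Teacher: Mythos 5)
Your proposal is correct and follows exactly the route the paper intends: the paper's entire ``proof'' of Corollary \ref{coro:mono2} is the one-line remark that it is the analogue of Proposition \ref{prop:mono1}, and you have correctly identified and verified the only points that actually need checking (the relation $\mu_{\cdot,\cdot}=2\triangle\omega/\triangle c$ for each of the four functionals, the positivity of the velocities $\mu_{\pm,0},\mu_{0,\pm}$, and the trapping of the new lines between $x_1(t)$ and $x_2(t)$ via the relative-speed hypothesis, which is what $\theta_4$ encodes).
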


\section{Proof of Theorem \ref{mainthm}}\label{sect:proof}
Let $\sigma\in\sts{1,2}$ and $z_0=z_0(\sigma)\in\sts{0,1}$ satisfy $ F(z_0;\sigma) =0,$
    where $F(z;\sigma)$ is defined by \eqref{eq:sig-z}. Let $\omega^{0}_k$ and $c^{0}_k$ satisfy the assumptions in Theorem
\ref{mainthm}. Let $\alpha_{0}$ be defined by Lemma \ref{lem:mod1}, and
$A_0>2$, $\delta_0=\delta_0(A_0), L_0=L_0(A_0)$ be chosen later.
Suppose that $u\sts{t}$ is the solution of \eqref{gDNLS} with initial
data
$
  u_0\in \U{\alpha}{\bm{\omega}^0}{{\bf c}^0}{L},
$
and define
\begin{align}
\label{def:Tast}
  T^{\ast}
  :=
  \sup
  \set{ t\geq 0 }
    {
               \sup_{\tau\in\left[0,t\right]}\inf_{\substack{x_{2}^0-x_{1}^0 > \frac{L}{2} \\ \gamma^0_1,~\gamma^0_2\in\R}}
        \left\| u(\tau,\cdot)-
            \sum_{j=1}^{2} Q_{\omega_j^0,c_j^0}\sts{\cdot-x_j^0}\e^{\i\gamma_j^0}
        \right\|_{H^1} \leq  A_0\sts{ \delta + \e^{-\frac{\theta_0}{2} L} }
    },
\end{align}
where
\begin{align}
\label{thta6}
  \theta_0=\min
  \ltl{
    \frac{ \sqrt{4\omega_1^0-\sts{c_1^0}^2} }{128},
    ~\frac{ \sqrt{4\omega_2^0-\sts{c_2^0}^2} }{128},
    ~c_2^0-c_1^0,
    ~2c_2^0- 2\mu,
    ~\mu-2c_1^0
  }.
\end{align}
By the continuity of $u(t)$ in $H^1$, we know that $T^*>0$. In order
to prove Theorem \ref{mainthm}, it suffices to show
$T^*=+\infty$ for some $A_0>2$, $\delta_0>0$, and $L_0$. We argue with contradiction. Suppose that $T^*<+\infty$, we know
that for any $t\in [0, T^*]$, there exist $(x^0_k(t),
\gamma^0_k(t))\in \R^2$, $k=1, 2$ such that $x^0_2(t)\geq
x^0_1(t)+\frac{L}{2}$ and
\begin{align*}
      \normhone{ u(t,\cdot)-\sum^2_{k=1}Q_{\omega^{0}_k,c^{0}_k}\sts{\cdot-x^{0}_k(t)}e^{\i\gamma^{0}_k(t)}}   \leq A_0\left(\delta +e^{-\theta_0
      \frac{L}{2}} \right).
\end{align*}

\begin{enumerate}[label=\emph{{Step \arabic*.}},ref=\emph{{Step \arabic*}}]

\item \textit{Decomposition of $u\sts{t}$.}
    Let $\widetilde{L}_{0}>0$ be
determined by Lemma \ref{lem:mod1}, and $L_2, L_3$ be
determined by Proposition \ref{prop:mono1} and Corollary
\ref{coro:mono2}, and choose $\delta_0>0$ small enough and $L_{0}$
large enough, such that for $\delta<\delta_0$ and
$L>L_0(A_0)>\max\ltl{ 2\widetilde{L}_{0}, L_2, L_3}$,
\begin{align*}
    A_{0}~\sts{ \delta + e^{ - \theta_{6} \frac{L}{2} }
    }<\alpha_{0}.
\end{align*}

By Lemma \ref{lem:mod1} and Lemma \ref{lem:mod2}, we have
    \begin{align}
    \label{19}
        u\sts{t, x}
        =
        \eps\sts{t, x}
        +
        \sum_{j=1}^{2}R_j\sts{t, x}
    \end{align}
  where
    $ R_j\sts{t, x} = Q_{ \omega_j\sts{t},c_j\sts{t} }\sts{ x-x_j\sts{t} }\e^{\i\gamma_j\sts{t} }$, and the orthogonality
    \begin{align}
    \label{53}
      \inprod{\eps(t)}{ R_{j} }
      =
      \inprod{\eps(t)}{\i\partial_x{ R_{j} } }
      =
      \inprod{\eps(t)}{ \i R_{j} }
      =
      \inprod{ \eps(t) }{ \partial_x{ R_{j}  } }=0,
    \end{align}
    hold for any $t\in\left[~0~,~T^{\ast}~\right]$. Moreover, we have
    \begin{gather}
    \label{41}
      \norm{\eps\sts{t}}_{H^1}+\sum^2_{j=1}\left(\abs{ \omega_j\sts{t}-\omega_j^0 } + \abs{ c_j\sts{t}-c_j^0 }\right)
      <C_{\ift}{ A_0\sts{ \delta + \e^{-\theta_0 \frac{L}{2}} }  },
      \\
      \label{42}
      \abs{ \dot{\omega}_k\sts{t} }
        +
        \abs{ \dot{c}_k\sts{t} }+
        \abs{ \dot{x}_k\sts{t}-c_k\sts{t} }
        +
        \abs{ \dot{\gamma}_k\sts{t}-\omega_k\sts{t} }
        \leq
        C_{\mathrm{abs}}\sts{\norm{\eps\sts{t}}_{H^1} +\e^{ -\theta_0 \sts{ L+\theta_0 t } } }
      \\
      \label{39}
      x_2\sts{t}-x_1\sts{t}>\frac{1}{2}\sts{ L + \theta_0 t }.
    \end{gather}
  In particular, we have
     \begin{gather}
      \norm{\eps(0)}_{H^1}+\sum^2_{j=1}\left(\abs{ \omega_j\sts{0}-\omega_j^0 } + \abs{ c_j\sts{0}-c_j^0 }\right)
      <C_{\ift}\delta.
    \end{gather}

    \step{\textit{ Refined estimate on $\norm{\eps}_{H^1}^2$ and $\abs{ \mathcal{J}\sts{t} - \mathcal{J}\sts{0} }$. }}In order to do so, we first introduce the functional
\begin{align*}
  \S\sts{t}:=E\sts{u\sts{t}} + \J_{\mathrm{sum}}\sts{u\sts{t}}.
\end{align*}
  and expand it as following
    \begin{lemma}
    \label{lem:tlexp}
    \begin{align}
    \label{eq:exp}
    \notag
         \S\sts{t}
         =
         &
         \sum_{k=1}^{2}S_{\omega_k\sts{0},c_k\sts{0}}\sts{ R_k\sts{0} }
         +
         \mathcal{H}\bilin{\eps\sts{t} }{\eps\sts{t}}
         \\
         \notag
         &
         +
        \bigo{  \sum_{k=1}^{2}\left(\abs{\omega_k\sts{t}-\omega_k\sts{0} }^2 + \abs{c_k\sts{t}-c_k\sts{0} }^2\right) }
         \\
         &
         +
         \norm{\eps\sts{t}}_{H^1}^2\smallo{ \norm{\eps\sts{t}}_{H^1} }
         +
         \bigo{ \e^{-\theta_0\sts{L + \theta_0 t }} }.
     \end{align}
     where
     \begin{align*}
       \mathcal{H}\bilin{\eps\sts{t} }{\eps\sts{t}}
       =
       &
       \frac{1}{2}\int\abs{\eps_x\sts{t}}^2
       +
       \frac{\omega_1\sts{t}}{2}\int \abs{\eps\sts{t}}^2~\sts{1-\phi}
       +
       \frac{\omega_2\sts{t}}{2}\int \abs{\eps\sts{t}}^2~\phi
       \\
       &
       +
       \frac{c_1\sts{t}}{2}\Im\int\bar{\eps}\eps_x~\sts{1-\phi}
       +
       \frac{c_2\sts{t}}{2}\Im\int\bar{\eps}\eps_x~\phi
       +
       \frac{1}{2}
       N\sts{\eps\sts{t}},
     \end{align*}
  and
     \begin{align*}
   N\sts{\eps}
  =
          &
          \sum_{k=1}^{2}
          \Im\int\abs{R_k}^{2\sigma}\bar{\eps} \eps_x
          +
          \sigma\sum_{k=1}^{2}
          \Im\int \abs{R_k}^{2\sigma-2}
          \sts{
            \bar{R}_k\partial_xR_k\abs{ \eps }^2 + R_k\partial_xR_k\bar{\eps}^2 }.
     \end{align*}
\end{lemma}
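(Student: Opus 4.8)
The plan is to Taylor-expand $\S(t)=E(u(t))+\J_{\mathrm{sum}}(u(t))$ about the two-soliton profile and to control each term by the modulation estimates of Section \ref{sect:mod} and the orthogonality conditions; the full computation is deferred to Appendix B. Write $v(t):=R_1(t)+R_2(t)$, so $u(t)=v(t)+\eps(t)$. Since $\J_{\mathrm{sum}}$ is, for each fixed $t$, a quadratic form in $u$ (see \eqref{j:sum} and its rewriting \eqref{j2}), while the nonlinear part $E_{\mathrm{nl}}$ of $E$ is twice continuously differentiable with the regularity needed for a second-order Taylor expansion (here $2\sigma>2$), one expands
\[
  \S(t)=\S(v)+\dual{\S'(v)}{\eps}+\tfrac12\biact{\S''(v)}{\eps}{\eps}+\norm{\eps}_{H^1}^2\,\smallo{\norm{\eps}_{H^1}},
\]
the remainder being estimated from $\norm{\eps}_{H^1}\leq C_{\ift}\alpha$ small together with $H^1(\R)\hookrightarrow L^\infty(\R)$. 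It then remains to treat the three leading terms, for which two facts are used throughout: first, by \eqref{dist:center} one has $x_2(t)-x_1(t)>\tfrac12(L+\theta_2 t)$, so by the exponential decay of $Q_{\omega,c}$ every integral containing a product of (derivatives of) $R_1$ and $R_2$ is $\bigo{\e^{-\theta_0(L+\theta_0 t)}}$; second, assumption $(c)$ of Theorem \ref{mainthm} gives $c_1^0<\mu<c_2^0$, so the line $\bar{x}^0+\mu t$ stays strictly between the two solitons, and since $a=L^2/64$ keeps $\sqrt{t+a}$ small relative to the separation, the error of replacing $\varphi\sts{\pm\tfrac{x-\bar{x}^0-\mu t}{\sqrt{t+a}}}$ by $1$, resp. $0$, on the essential support of $R_1$, resp. $R_2$, is again $\bigo{\e^{-\theta_0(L+\theta_0 t)}}$.

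For the zeroth order term, split $\S(v)=E(R_1+R_2)+\J_{\mathrm{sum}}(R_1+R_2)$ into diagonal and interaction parts and localise the cut-offs: this gives $E(R_1+R_2)=E(R_1)+E(R_2)+\bigo{\e^{-\theta_0(L+\theta_0 t)}}$ and $\J_{\mathrm{sum}}(R_1+R_2)=\sum_{k=1}^{2}\sts{\omega_k(0)\mass{R_k}+c_k(0)\momentum{R_k}}+\bigo{\e^{-\theta_0(L+\theta_0 t)}}$. Using the translation and phase invariance of $E$, $M$, $P$ and $R_k(t)=Q_{\omega_k(t),c_k(t)}(\cdot-x_k(t))\e^{\i\gamma_k(t)}$, this equals $\sum_k f_k(\omega_k(t),c_k(t))+\bigo{\e^{-\theta_0(L+\theta_0 t)}}$, where $f_k(\omega,c):=S_{\omega_k(0),c_k(0)}(Q_{\omega,c})=d(\omega,c)+(\omega_k(0)-\omega)\mass{Q_{\omega,c}}+(c_k(0)-c)\momentum{Q_{\omega,c}}$. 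Because $d'(\omega,c)=\sts{\mass{Q_{\omega,c}},\momentum{Q_{\omega,c}}}$ (first item of the Remark after Lemma \ref{lem:variation}), one gets $\nabla f_k(\omega_k(0),c_k(0))=0$; and $f_k$ is $\mathcal{C}^2$ near $(\omega_k(0),c_k(0))$ by the explicit formula \eqref{phi} together with \eqref{dynamic:tech:nondeg}. Hence a second-order Taylor expansion of $f_k$ gives $f_k(\omega_k(t),c_k(t))=f_k(\omega_k(0),c_k(0))+\bigo{\abs{\omega_k(t)-\omega_k(0)}^2+\abs{c_k(t)-c_k(0)}^2}$, while $f_k(\omega_k(0),c_k(0))=S_{\omega_k(0),c_k(0)}(R_k(0))$ by invariance. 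This produces the first line of \eqref{eq:exp} and the $\bigo{\sum_k(\abs{\omega_k(t)-\omega_k(0)}^2+\abs{c_k(t)-c_k(0)}^2)}$ term.

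For the first order term, split $\dual{\S'(v)}{\eps}=\dual{E'(v)}{\eps}+\dual{\J_{\mathrm{sum}}'(v)}{\eps}$, discard the interactions, and localise the cut-offs. From $S'_{\omega_k(t),c_k(t)}(R_k)=0$ one has $E'(R_k)=-\omega_k(t)R_k-\i c_k(t)\partial_x R_k$, so $\dual{E'(R_k)}{\eps}=-\omega_k(t)\inprod{\eps}{R_k}-c_k(t)\inprod{\eps}{\i\partial_x R_k}$, while $\dual{\J_{\mathrm{sum}}'(R_k)}{\eps}$ reduces, up to $\bigo{\e^{-\theta_0(L+\theta_0 t)}}$, to a real linear combination of $\inprod{\eps}{R_k}$ and $\inprod{\eps}{\i\partial_x R_k}$. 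All of these vanish by the orthogonality conditions \eqref{orth:dyn}, so $\dual{\S'(v)}{\eps}=\bigo{\e^{-\theta_0(L+\theta_0 t)}}$.

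For the second order term, since $\J_{\mathrm{sum}}$ is quadratic, $\tfrac12\biact{\S''(v)}{\eps}{\eps}=\tfrac12\biact{E''(v)}{\eps}{\eps}+\J_{\mathrm{sum}}(\eps)$ with $E''(v)=-\partial_x^2+E_{\mathrm{nl}}''(v)$; replacing $\abs{R_1+R_2}^{2\sigma}$ and $\abs{R_1+R_2}^{2\sigma-2}$ by $\sum_k\abs{R_k}^{2\sigma}$ and $\sum_k\abs{R_k}^{2\sigma-2}$ (the interaction terms, paired with $\abs{\eps}^2$ or $\abs{\eps}\abs{\eps_x}$ and $\norm{\eps}_{H^1}$ bounded, being exponentially small) gives $\tfrac12\biact{E''(v)}{\eps}{\eps}=\tfrac12\int\abs{\eps_x}^2+\tfrac12N(\eps)+\bigo{\e^{-\theta_0(L+\theta_0 t)}}$. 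With $1-\phi=\varphi\sts{-\tfrac{x-\bar{x}^0-\mu t}{\sqrt{t+a}}}$ and $\phi=\varphi\sts{\tfrac{x-\bar{x}^0-\mu t}{\sqrt{t+a}}}$ (cf. \eqref{j2}), $\J_{\mathrm{sum}}(\eps)$ supplies the remaining $\phi$-localised, $\omega$- and $c$-weighted part of $\mathcal{H}\bilin{\eps(t)}{\eps(t)}$ once its coefficients $\omega_k(0),c_k(0)$ are replaced by $\omega_k(t),c_k(t)$; that replacement costs $\sum_k\sts{\abs{\omega_k(t)-\omega_k(0)}+\abs{c_k(t)-c_k(0)}}\norm{\eps}_{H^1}^2$, which by Young's inequality becomes $\bigo{\sum_k(\abs{\omega_k(t)-\omega_k(0)}^2+\abs{c_k(t)-c_k(0)}^2)}+\norm{\eps}_{H^1}^2\,\smallo{\norm{\eps}_{H^1}}$. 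Adding the three contributions yields \eqref{eq:exp}. The main obstacle is not any single estimate but the bookkeeping: one must check that each interaction term and each cut-off localisation error obeys the uniform-in-$t$ bound $\bigo{\e^{-\theta_0(L+\theta_0 t)}}$, which is exactly where the separation \eqref{dist:center}, the ordering $c_1^0<\mu<c_2^0$, and the choice $a=L^2/64$ are used.
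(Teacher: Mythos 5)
Your proposal is correct and follows essentially the same route as the paper's Appendix B: expand $\S$ about $R_1+R_2$, kill the linear terms with the orthogonality conditions \eqref{orth:dyn} and $S'_{\omega_k,c_k}(R_k)=0$, kill all interaction and cut-off localisation errors with the separation \eqref{dist:center} and the ordering $c_1^0<\mu<c_2^0$, and absorb the coefficient updates $\omega_k(0)\to\omega_k(t)$, $c_k(0)\to c_k(t)$ into the quadratic parameter error. The only organisational difference is that you package the computation as an abstract second-order Taylor expansion and make explicit (via $d'(\omega,c)=\sts{M,P}$ and $\nabla f_k(\omega_k(0),c_k(0))=0$) the step converting $\sum_k S_{\omega_k(0),c_k(0)}(R_k(t))$ into $\sum_k S_{\omega_k(0),c_k(0)}(R_k(0))$ at quadratic cost, a step the paper's term-by-term expansion leaves implicit.
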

\begin{proof} Please refer to the proof in Appendix A.
\end{proof}
\begin{lemma}\label{lem:coer2}
  \begin{align*}
    \mathcal{H}\bilin{\eps }{\eps}\geq \kappa\norm{\eps}_{H^1}^2.
  \end{align*}
\end{lemma}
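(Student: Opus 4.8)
The plan is to prove the coercivity of $\mathcal{H}$ by localizing it around each of the two solitary waves by a partition of unity subordinate to the cut‑off $\phi$ that enters $\J_{\mathrm{sum}}$ and Lemma \ref{lem:tlexp}, reducing the positivity of each localized piece to the one‑bubble coercivity of Proposition \ref{prop:coercivity}, and absorbing the remaining errors by taking $L$ large (so that $1/(t+a)\lesssim L^{-2}$, since $a=L^2/64$), by the weak‑interaction estimates of Lemma \ref{lem:mod2}, and by fixing the weight $\varphi$ steep enough.

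Concretely, I would set $\psi_1=\sqrt{1-\phi}$, $\psi_2=\sqrt{\phi}$ (so $\psi_1^2+\psi_2^2\equiv1$) and $\eps_k:=\psi_k\eps$. Using $|\eps_k|^2=\psi_k^2|\eps|^2$, $\bar{\eps}_k^2=\psi_k^2\bar{\eps}^2$, $\Im(\bar{\eps}_k\partial_x\eps_k)=\psi_k^2\Im(\bar{\eps}\eps_x)$ (the term $\psi_k\psi_k'|\eps|^2$ being real), and the partition identity $\int|\eps_x|^2=\sum_k\int|\partial_x\eps_k|^2-\sum_k\int(\psi_k')^2|\eps|^2$ (which rests on $\psi_1\psi_1'+\psi_2\psi_2'=\tfrac12(\psi_1^2+\psi_2^2)'=0$), I would check term by term, exactly as in the expansion of Lemma \ref{lem:tlexp}, that the coefficients of $\mathcal{H}$ split and
\begin{align}
\label{plan:loc}
  \mathcal{H}\bilin{\eps(t)}{\eps(t)}
  = \frac12\sum_{k=1}^2\biact{S_{\omega_k(t),c_k(t)}''\sts{R_k(t)}}{\eps_k}{\eps_k}
  - \frac12\sum_{k=1}^2\int(\psi_k')^2|\eps|^2 + \mathcal{E}(t),
\end{align}
where $S_{\omega,c}''(R_k)$ denotes the spatial translate and phase rotation of the operator of Proposition \ref{prop:coercivity}, so that all conclusions of that proposition hold verbatim with $R_k$ in place of $Q_{\omega,c}$, and $\mathcal{E}(t)$ collects the ``cross'' contributions in $\tfrac12N(\eps)$ in which a coefficient built from $R_k$ (either $|R_k|^{2\sigma}$ or $|R_k|^{2\sigma-2}\bar{R}_k\partial_xR_k$) is paired against $\psi_{k'}^2$ with $k'\neq k$.

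Next I would estimate the remainders in \eqref{plan:loc}. Since $(\varphi')^2\lesssim\varphi(1-\varphi)$ and $a=L^2/64$, one has $(\psi_k')^2\lesssim(t+a)^{-1}$, so the middle term is $\leq C_{\mathrm{abs}}L^{-2}\norm{\eps(t)}_{H^1}^2$. By hypotheses $(b)$--$(c)$ of Theorem \ref{mainthm} the speed $\mu$ lies strictly between $c_1^0$ and $c_2^0$ (equivalently $\theta_0>0$), so the line $x=\bar x^0+\mu t$ stays a distance $\gtrsim L+t$ from both soliton centres (recall $x_2^0-x_1^0>L$ and the estimates of Lemma \ref{lem:mod2}); since moreover $x_2^0-x_1^0>L$ while the localization window $\sqrt{t+a}$ has size $\sim L$, the cut‑off $\phi$ is, on the region occupied by the $k$‑th bubble, bounded by $\varphi$ evaluated at a fixed negative number $-\nu_0$ (with $\nu_0>0$ depending only on the geometry). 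Hence $|\mathcal{E}(t)|\leq C_{\mathrm{abs}}\big(\sqrt{\varphi(-\nu_0)}+\e^{-\theta_0 L}\big)\norm{\eps(t)}_{H^1}^2$, and the four pairings $\inprod{\eps_k}{R_k}$, $\inprod{\eps_k}{\partial_xR_k}$, $\inprod{\eps_k}{\i R_k}$, $\inprod{\eps_k}{\i\partial_xR_k}$ are $\bigo{\big(\sqrt{\varphi(-\nu_0)}+\e^{-\theta_0 L}\big)\norm{\eps(t)}_{H^1}}$, using that $\eps$ itself satisfies \eqref{orth:dyn}. Since by Lemma \ref{lem:mod2} the parameters $(\omega_k(t),c_k(t))$ stay, for $\alpha$ small, in a fixed compact subset of $\{(\omega,c):c\in(-2\sqrt{\omega},2z_0\sqrt{\omega})\}$ uniformly in $t$, Proposition \ref{prop:coercivity} applies with one constant $C_{\mathrm{abs}}>0$, and the forms $\biact{S_{\omega_k(t),c_k(t)}''(R_k)}{\cdot}{\cdot}$ are uniformly bounded on $H^1$; writing $\eps_k=\eta_k+\delta_k$ with $\eta_k$ in the real‑orthogonal complement of $\{R_k,\partial_xR_k,\i R_k,\i\partial_xR_k\}$ and $\norm{\delta_k}_{H^1}\leq C_{\mathrm{abs}}\big(\sqrt{\varphi(-\nu_0)}+\e^{-\theta_0 L}\big)\norm{\eps(t)}_{H^1}$ (legitimate because the Gram matrix of those four vectors is uniformly nondegenerate), Proposition \ref{prop:coercivity} gives $\biact{S_{\omega_k(t),c_k(t)}''(R_k)}{\eta_k}{\eta_k}\geq C_{\mathrm{abs}}\norm{\eta_k}_{H^1}^2$, and expanding $\eps_k=\eta_k+\delta_k$ and using boundedness of the form yields $\biact{S_{\omega_k(t),c_k(t)}''(R_k)}{\eps_k}{\eps_k}\geq C_{\mathrm{abs}}\norm{\eps_k}_{H^1}^2-C_{\mathrm{abs}}\big(\sqrt{\varphi(-\nu_0)}+\e^{-\theta_0 L}\big)\norm{\eps(t)}_{H^1}^2$.

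To conclude, the partition identity gives $\sum_k\norm{\eps_k}_{H^1}^2=\norm{\eps(t)}_{H^1}^2+\sum_k\int(\psi_k')^2|\eps|^2\geq\norm{\eps(t)}_{H^1}^2$, so inserting the estimates above into \eqref{plan:loc} yields
\begin{align*}
  \mathcal{H}\bilin{\eps(t)}{\eps(t)}
  &\geq \Big(\tfrac{C_{\mathrm{abs}}}{2}-C_{\mathrm{abs}}\sqrt{\varphi(-\nu_0)}-C_{\mathrm{abs}}L^{-2}-C_{\mathrm{abs}}\e^{-\theta_0 L}\Big)\norm{\eps(t)}_{H^1}^2
  \\
  &\geq \kappa\norm{\eps(t)}_{H^1}^2,
\end{align*}
with $\kappa>0$ uniform in $t\in[0,T^*]$, once $\varphi$ is fixed steep enough that $\sqrt{\varphi(-\nu_0)}\ll C_{\mathrm{abs}}$ and $L$ is large (as already imposed in Section \ref{sect:proof}). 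The main obstacle is exactly this last step: because the localization window $\sqrt{t+a}$ is comparable to the separation $L$, one cannot make the cut‑off bubbles $\eps_k$ orthogonal to the kernel and symmetry directions of $R_k$ up to an exponentially small error; one must instead exploit that $x_2^0-x_1^0>L$ forces the relevant value of $\varphi$ to sit at a fixed distance from $0$ (resp.\ $1$), and then choose $\varphi$ steep enough to beat the one‑bubble coercivity constant $C_{\mathrm{abs}}$ — and it is here, together with the uniform nondegeneracy of the Gram matrix of $\{R_k,\partial_xR_k,\i R_k,\i\partial_xR_k\}$, that the relative‑speed hypothesis $(c)$ (via $c_1^0<\mu<c_2^0$) and the weak‑interaction bookkeeping of Lemma \ref{lem:mod2} are genuinely needed.
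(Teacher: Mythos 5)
Your argument is correct and is essentially the proof the paper has in mind: the paper itself gives no argument but defers to Lemma 6.2 of \cite{LeWu:DNLS} and Lemma 6.2 of \cite{MiaoTX:DNLS:Stab}, and both of those proceed exactly by your localization $\eps_k=\psi_k\eps$ with $\psi_1^2+\psi_2^2\equiv1$, reduction to the one--bubble coercivity of Proposition \ref{prop:coercivity} after restoring the four orthogonality conditions, and absorption of $\sum_k\int(\psi_k')^2\abs{\eps}^2\lesssim L^{-2}\norm{\eps}_{H^1}^2$ using $a=L^2/64$. The one place where you misjudge the situation is the ``main obstacle'' of your final paragraph: it is not there. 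The profile $\varphi$ must satisfy $\abs{\Lambda\varphi}\leq\varphi'$ (this is used in the estimate of \eqref{1} in the proof of Proposition \ref{prop:mono1}), which together with $\varphi'\geq0$ forces $\supp\varphi'\subset[-1,1]$; hence $\phi(t,\cdot)$ vanishes identically on $(-\infty,\bar{x}^0+\mu t-\sqrt{t+a}]$ and equals $1$ on $[\bar{x}^0+\mu t+\sqrt{t+a},\infty)$, and by \eqref{43}--\eqref{44} these two half--lines contain neighborhoods of $x_1(t)$ and $x_2(t)$ of radius $\gtrsim L+t$. Consequently every localization defect in your scheme --- the cross terms $\mathcal{E}(t)$ as well as the pairings $\inprod{\eps_k}{R_k}$, $\inprod{\eps_k}{\partial_xR_k}$, $\inprod{\eps_k}{\i R_k}$, $\inprod{\eps_k}{\i\partial_xR_k}$ --- is genuinely $\bigo{\e^{-\theta_0\sts{L+\theta_0t}}}$ times the appropriate power of $\norm{\eps}_{H^1}$, and no term of size $\varphi(-\nu_0)$ survives; there is nothing to ``beat'' by making $\varphi$ steep, only $L$ to take large. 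Your fix is not wrong, merely unnecessary. Two small points you should still record: $\varphi$ must also be chosen with $(\varphi')^2\lesssim\varphi(1-\varphi)$ so that $\sqrt{\phi}$ and $\sqrt{1-\phi}$ are Lipschitz with $(\psi_k')^2\lesssim(t+a)^{-1}$, and the momentum terms in $\mathcal{H}$ should be read with the sign $-\tfrac{c_k(t)}{2}\Im\int\bar{\eps}\eps_x$ (consistent with the definition of $\mathcal{J}_{\mathrm{sum}}$ and the expansion in Appendix B) for your identification with $\tfrac12\biact{S''_{\omega_k(t),c_k(t)}\sts{R_k}}{\eps_k}{\eps_k}$ to come out right.
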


\begin{proof}
Please refer to Lemma 6.2 in \cite{LeWu:DNLS} and Lemma 6.2 in \cite{MiaoTX:DNLS:Stab}.
\end{proof}

    By Lemma \ref{lem:tlexp}, we have for all $t\in\left[0, T^{\ast}\right],$
        \begin{align}
        \label{20}
        \notag
            \S\sts{t}
            =
            &
            \sum_{k=1}^{2}S_{\omega_k\sts{0},c_k\sts{0}}\sts{ R_k\sts{0} }
            +
            \mathcal{H}\bilin{\eps\sts{t} }{\eps\sts{t}}
            \\
            \notag
            &
            +
           \bigo{  \sum_{k=1}^{2}\left(\abs{\omega_k\sts{t}-\omega_k\sts{0} }^2 + \abs{c_k\sts{t}-c_k\sts{0} }^2\right) }
            \\
            &
            +
            \norm{\eps\sts{t}}_{H^1}^2\smallo{ \norm{\eps\sts{t}}_{H^1} }
            +
            \bigo{ \e^{-\theta_0\sts{L + \theta_0 t }} }.
        \end{align}
   In particular, we have
        \begin{align*}
            \S\sts{0}
            =
            &
            \sum_{k=1}^{2}S_{\omega_k\sts{0},c_k\sts{0}}\sts{ R_k\sts{0} }
            +
            \mathcal{H}\bilin{\eps\sts{0}}{\eps\sts{0}}
            +
            \norm{\eps\sts{0}}_{H^1}^2\smallo{ \norm{\eps\sts{0}}_{H^1} }
            +
            \bigo{ \e^{-\theta_0 L } },
        \end{align*}
        which implies that
        \begin{align}
        \label{21}
          \sum_{k=1}^{2}S_{\omega_k\sts{0},c_k\sts{0}}\sts{ R_k\sts{0} }
            =
            &
            \S\sts{0}
            -
            \mathcal{H}\bilin{\eps\sts{0} }{\eps\sts{0}}
            +
            {\eps\sts{0}}_{H^1}^2\smallo{\norm{\eps\sts{0}}_{H^1}}
            +
            \bigo{ \e^{-\theta_0 L } }.
        \end{align}
        Inserting \eqref{21} into \eqref{20}, we obtain by  Lemma \ref{lem:coer2} and the conservation laws of mass, momentum and energy that
        \begin{align}
        \notag
          &
          \kappa\norm{\eps\sts{t}}_{H^1}^2
          \\
          \notag
          \leq
          &
          \mathcal{H}\bilin{\eps\sts{t} }{\eps\sts{t}}
          \\
          \notag
          =
          &
          \S\sts{t} - \sum_{k=1}^{2}S_{\omega_k\sts{0},c_k\sts{0}}\sts{ R_k\sts{0} }
          +
         \bigo{  \sum_{k=1}^{2}\left( \abs{\omega_k\sts{t}-\omega_k\sts{0} }^2 + \abs{c_k\sts{t}-c_k\sts{0} }^2\right) }
          \\
          \notag
          &
          +
          \norm{\eps\sts{t}}_{H^1}^2\smallo{ \norm{\eps\sts{t}}_{H^1} }
          +
          \bigo{ \e^{-\theta_0\sts{L + \theta_0 t }} }
          \\
          \notag
          =
          &
          \S\sts{t} - \S\sts{0}
          +
          \mathcal{H}\bilin{\eps\sts{0} }{\eps\sts{0}}
          +
          {\eps\sts{0}}_{H^1}^2\smallo{\norm{\eps\sts{0}}_{H^1}}
          +
          \bigo{ \e^{-\theta_0 L } }
          \\
          \notag
          &
          +
         \bigo{  \sum_{k=1}^{2}\left(\abs{\omega_k\sts{t}-\omega_k\sts{0} }^2 + \abs{c_k\sts{t}-c_k\sts{0} }^2 \right) }
          +
          \bigo{ \e^{-\theta_0\sts{L + \theta_0 t }} }
          \\
          \notag
          =
          &
          \J_{\mathrm{sum}}\sts{t} - \J_{\mathrm{sum}}\sts{0}
          +
          \mathcal{H}\bilin{\eps\sts{0} }{\eps\sts{0}}
          +
          {\eps\sts{0}}_{H^1}^2\smallo{\norm{\eps\sts{0}}_{H^1}}
          +
          \bigo{ \e^{-\theta_0 L } }
          \\
          &
          +
\bigo{ \sum_{k=1}^{2}\left(\abs{\omega_k\sts{t}-\omega_k\sts{0} }^2 + \abs{c_k\sts{t}-c_k\sts{0} }^2 \right)}
          +
          \bigo{ \e^{-\theta_0\sts{L + \theta_0 t }} }. \label{est:J abs}
        \end{align}
    By Proposition \ref{prop:mono1}, we obtain
        \begin{align}
          \norm{\eps\sts{t}}_{H^1}^2
          \leq
          &
          \frac{C_{\mathrm{abs}}}{ L }\sup_{0<s<t}\norm{\eps\sts{s}}_{H^1}^2
          +
          C_{\mathrm{abs}}\e^{-\theta_0 L }
          +
          C_{\mathrm{abs}}\norm{\eps\sts{0}}_{H^1}^2\notag
          \\
          &
          +
       C_{\mathrm{abs}}   \sum_{k=1}^{2}\left( \abs{\omega_k\sts{t}-\omega_k\sts{0} }^2 + \abs{c_k\sts{t}-c_k\sts{0} }^2 \right). \label{est:rem}
        \end{align}
Moreover, by \eqref{est:J abs}, we have
    \begin{align*}
      &
      \mathcal{J} \sts{ 0 }
        -
      \mathcal{J} \sts{ t }
      \\
      =
      &
    \mathcal{J}_{sum} \sts{ 0 }
        -
      \mathcal{J}_{sum} \sts{ t }
      \\
      =
      &
      \mathcal{H}\bilin{\eps\sts{0} }{\eps\sts{0}}
      -
      \mathcal{H}\bilin{\eps\sts{t} }{\eps\sts{t}}
      +
      \norm{\eps\sts{0}}_{H^1}^2\smallo{ \norm{\eps\sts{0}}_{H^1} }
      +
      \bigo{ \e^{-\theta_0 L } }
      \\
      &
      +
\bigo{ \sum_{k=1}^{2}\left(\abs{\omega_k\sts{t}-\omega_k\sts{0} }^2 + \abs{c_k\sts{t}-c_k\sts{0} }^2 \right)}
            +
            \bigo{ \e^{-\theta_0\sts{L + \theta_0 t }} }
      \\
      \leq
      &
        C_{\mathrm{abs}}\norm{\eps\sts{0}}_{H^1}^2
        +
        C_{\mathrm{abs}}\e^{-\theta_0 L }
      +
              C_{\mathrm{abs}}
      \sum_{k=1}^{2}  \ntn{
            \abs{\omega_k\sts{t}-\omega_k\sts{0} }^2
            +
            \abs{c_k\sts{t}-c_k\sts{0} }^2
            },
    \end{align*}
which together with \eqref{mono1} implies that
\begin{align}
\abs{ \mathcal{J}\sts{t} - \mathcal{J}\sts{0} }
    \leq
    &
    \frac{C_{\mathrm{abs}}}{ L }\sup_{0\leq s\leq t}\norm{\eps\sts{s}}_{H^1}^2
    +
    C_{\mathrm{abs}}\e^{-\theta_0 L }
    +
    C_{\mathrm{abs}}\norm{\eps\sts{0}}_{H^1}^2 \notag
  \\
  &
  +
  \sum_{k=1}^{2}
    C_{\mathrm{abs}}
    \ntn{
        \abs{\omega_k\sts{t}-\omega_k\sts{0} }^2
        +
        \abs{c_k\sts{t}-c_k\sts{0} }^2
        }. \label{est:J ref}
  \end{align}

\item{\textit{Refined estimates of $\abs{\omega_k\sts{t}-\omega_k\sts{0}}$ and $ \abs{c_k\sts{t}-c_k\sts{0}}$.}}
Recall that
\begin{align*}
  \phi\sts{t,x}=\varphi\sts{ \frac{x-\bar{x}^0 -\mu  t }{ \sqrt{t+a} } },  \phi_{\pm,0}\sts{t,x}=\varphi\sts{ \frac{x-\bar{x}^0 -\mu_{\pm,0} t }{ \sqrt{t+a} } },
    \phi_{0,\pm}\sts{t,x}=\varphi\sts{ \frac{x-\bar{x}^0 -\mu_{0,\pm} t }{ \sqrt{t+a} } },
    \end{align*}
we have
\begin{lemma}
\label{lem:massloc}
  \begin{gather}
  \label{mloc1}
    \abs{ \int\abs{u(t,x)}^2~\phi(t,x) - \int\abs{R_2\sts{t}}^2 }
    \leq
    C_{\mathrm{abs}}\e^{-\theta_0\sts{ L + \theta_0 t }} + C_{\mathrm{abs}}\norm{\eps}_{L^2}^2.
    \\
    \label{mloc2}
    \abs{ \int\abs{u(t,x)}^2~\sts{1-\phi(t,x)} - \int\abs{R_1\sts{t}}^2 }
    \leq
    C_{\mathrm{abs}}\e^{-\theta_0\sts{ L + \theta_0 t }} + C_{\mathrm{abs}}\norm{\eps}_{L^2}^2.
    \\
    \label{ploc1}
    \abs{ \Im\int\bar{u}u_x~\phi(t,x) - \Im\int\sts{\bar{R}_2\partial_xR_2}\sts{t} }
    \leq
    C_{\mathrm{abs}}\e^{-\theta_0\sts{ L + \theta_0 t }} + C_{\mathrm{abs}}\norm{\eps}_{H^1}^2.
    \\
    \label{ploc2}
    \abs{ \Im\int\bar{u}u_x~\sts{1-\phi(t,x)} - \Im\int\sts{\bar{R}_1\partial_xR_1}\sts{t} }
    \leq
    C_{\mathrm{abs}}\e^{-\theta_0\sts{ L + \theta_0 t }} + C_{\mathrm{abs}}\norm{\eps}_{H^1}^2.
   \end{gather}
\end{lemma}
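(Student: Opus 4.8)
The plan is to prove Lemma \ref{lem:massloc} by substituting the geometric decomposition $u(t)=R_1(t)+R_2(t)+\eps(t)$ of Lemma \ref{lem:mod2} into each of the four quantities and expanding the resulting quadratic expressions. For \eqref{mloc1},
\[
\int\abs{u}^2\phi-\int\abs{R_2}^2=-\int\abs{R_2}^2\sts{1-\phi}+\int\abs{R_1}^2\phi+\int\abs{\eps}^2\phi+2\Re\int\bar R_1R_2\,\phi+2\Re\int\bar R_1\eps\,\phi+2\Re\int\bar R_2\eps\,\phi,
\]
and expanding $\bar uu_x$ in \eqref{ploc1} likewise produces nine quadratic terms. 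So everything reduces to estimating, term by term, four kinds of contributions: (i) the ``diagonal correction'' terms, in which the principal $R_2$ piece is paired with $1-\phi$ and the spurious $R_1$ piece with $\phi$; (ii) the $R_1$–$R_2$ interaction terms; (iii) the $R_j$–$\eps$ cross terms; and (iv) the purely quadratic $\eps$ term. Since the $(1-\phi)$ statements \eqref{mloc2}, \eqref{ploc2} follow from the $\phi$ statements by interchanging the roles of $R_1$, $R_2$ and of $\phi$, $1-\phi$, it is enough to treat \eqref{mloc1} and \eqref{ploc1}.

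For (i) and (ii) I would reuse the separation estimates already established in the proof of Lemma \ref{lem:p est}. Assumptions $(b)$–$(c)$ of Theorem \ref{mainthm} guarantee that $\mu$ lies strictly between $c_1^0$ and $c_2^0$ with room to spare, so the drift bounds from \eqref{dynamic:para:contr}–\eqref{dist:center} give, for $L$ large, $\bar x^0+\mu t-x_1(t)\geq\frac{L}{16}+\frac{\mu-c_1^0}{4}t$ and $x_2(t)-\bar x^0-\mu t\geq\frac{L}{16}+\frac{c_2^0-\mu}{4}t$. Since $\varphi'$ is supported in a bounded interval, $\phi$ vanishes where $x<\bar x^0+\mu t-\sqrt{t+a}$ and $1-\phi$ vanishes where $x>\bar x^0+\mu t+\sqrt{t+a}$; combining this with $\sqrt{t+a}\leq\sqrt t+\frac{L}{8}$ and the exponential decay of $\Phi_{\omega_j,c_j}$, uniform in $t$ by \eqref{dynamic:tech:nondeg}, one finds that $\int\abs{R_2}^2\sts{1-\phi}$, $\int\abs{R_1}^2\phi$, the corresponding momentum-density integrals, and all $\abs{\mathcal{R}_1\mathcal{R}_2}$-type interaction integrals are $\lesssim\e^{-\theta_0\sts{L+\theta_0t}}$, provided $\theta_0$ is taken small enough relative to $\sqrt{4\omega_j^0-\sts{c_j^0}^2}$, $\mu-2c_1^0$ and $2c_2^0-2\mu$ — which is exactly the choice \eqref{thta6}. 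For (iv) the bounds are immediate: $\abs{\int\abs{\eps}^2\phi}\leq\norm{\eps}_{L^2}^2$ and $\abs{\Im\int\bar\eps\eps_x\phi}\leq\norm{\eps}_{H^1}^2$, which explains why \eqref{mloc1}–\eqref{mloc2} only see $\norm{\eps}_{L^2}^2$ whereas \eqref{ploc1}–\eqref{ploc2} see $\norm{\eps}_{H^1}^2$.

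The delicate part, and the main obstacle, is (iii) in the regime where the cutoff does \emph{not} help, namely $\int\bar R_2\eps\,\phi$, $\int\bar R_2\eps_x\,\phi$ and $\int\bar\eps\,\partial_xR_2\,\phi$, since there $\phi\approx1$ on the support of $R_2$. Here one must use the orthogonality conditions \eqref{orth:dyn}: $\inprod{\eps}{R_2}=\inprod{\eps}{\i R_2}=0$ forces $\int\bar R_2\,\eps=0$, and $\inprod{\eps}{\partial_xR_2}=\inprod{\eps}{\i\partial_xR_2}=0$ forces $\int\overline{\partial_xR_2}\,\eps=0$, hence $\int\bar R_2\,\eps_x=0$ after an integration by parts. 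Consequently $\int\bar R_2\eps\,\phi=-\int\bar R_2\eps\,\sts{1-\phi}$, $\int\bar R_2\eps_x\,\phi=-\int\bar R_2\eps_x\,\sts{1-\phi}$, and similarly for $\int\bar\eps\,\partial_xR_2\,\phi$; by Cauchy–Schwarz each right-hand side is bounded by $\norm{R_2\sts{1-\phi}}_{L^2}\norm{\eps}_{H^1}$ (or $\norm{\partial_xR_2\sts{1-\phi}}_{L^2}\norm{\eps}_{L^2}$), which is $\lesssim\e^{-\theta_0\sts{L+\theta_0t}}$ because $R_2\sts{1-\phi}$ is supported far from $x_2(t)$ and $\norm{\eps}_{H^1}$ is bounded by \eqref{dynamic:est:ift}. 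The remaining $R_1$–$\eps$ cross terms, where the cutoff \emph{does} help, are estimated directly by Cauchy–Schwarz: $\abs{\int\bar R_1\eps\,\phi}\leq\norm{R_1\phi}_{L^2}\norm{\eps}_{L^2}$, $\abs{\int\bar R_1\eps_x\,\phi}\leq\norm{R_1\phi}_{L^2}\norm{\eps}_{H^1}$ and $\abs{\int\bar\eps\,\partial_xR_1\,\phi}\leq\norm{\partial_xR_1\,\phi}_{L^2}\norm{\eps}_{L^2}$, all $\lesssim\e^{-\theta_0\sts{L+\theta_0t}}$. Collecting the four types of contributions yields \eqref{mloc1} and \eqref{ploc1}, and hence the lemma.
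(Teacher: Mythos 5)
Your proposal is correct and follows essentially the same route as the paper's proof: expand $u=R_1+R_2+\eps$, control the localization errors $\int\abs{R_1}^2\phi$, $\int\abs{R_2}^2(1-\phi)$ and the $R_1$--$R_2$ interactions by the drift estimates \eqref{43}--\eqref{44} and \eqref{38}, and use the orthogonality \eqref{orth:dyn} to replace $\int\bar R_2\eps\,\phi$ by $-\int\bar R_2\eps\,(1-\phi)$. The only cosmetic difference is that the paper absorbs the remaining cross terms into the diagonal terms plus $\abs{\eps}^2$ via the elementary inequality $2\abs{ab}\leq\abs{a}^2+\abs{b}^2$, while you invoke Cauchy--Schwarz together with the a priori bound on $\norm{\eps}_{H^1}$; both land on the same estimate.
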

\begin{proof}
By the definition of $\phi$ and the exponential decay estimate of $R_k,$ it is easy to check that
  \begin{align}
  \label{46}
  \nonumber
    &
    \abs{ \int\abs{u(t,x)}^2~\phi(t,x) - \int \abs{R_2}^2 }
    \\
  \nonumber
    =
    &
    \abs{ \int\abs{R_1+R_2+\eps}^2~\phi - \int \abs{R_2}^2 }
    \\
  \nonumber
    =
    &
    \abs{
    \int
    \sts{
     \abs{R_1}^2~\phi - \abs{R_2}^2~\sts{1-\phi} + \abs{\eps}^2~\phi
    +
    2\Re
    \left[
        \bar{R}_1R_2\phi + \bar{R}_1\eps\phi - 2\bar{R}_2\eps\sts{1-\phi}
    \right]
    }
    }
    \\
    \leq
    &
    2\int
    \ntn{
            \abs{R_1}^2~\phi
        +   \abs{R_2}^2~\sts{1-\phi}
        +   \abs{ \bar{R}_1R_2}
        +   \abs{\eps}^2
    }.
  \end{align}
First, by inserting the estimate \eqref{39} into \eqref{38}, from the definition of \(\theta_0\) in \eqref{thta6}, we have
\begin{gather}\label{40}
  \int\abs{ \bar{R}_1R_2}
  <
  C_{\mathrm{abs}}\e^{-\theta_0\sts{ L + \theta_0 t }}.
\end{gather}
Now, by
\eqref{41} and \eqref{42}, we obtain
\begin{align*}
   \frac{\d}{\d t}\left[ \bar{x}^0+\mu t -\sqrt{t+a} - x_1\sts{t} \right]
   =
  &
  \mu - \dot{x}_1(t) - \frac{1}{2\sqrt{t+a}}
  \\
  =
  &
  \mu - \sts{ \dot{x}_1(t)-c_1(t) } - \sts{ c_1(t)-c_1^0 } -c_1^0 - \frac{1}{2\sqrt{t+a}}
  \\
  \geq
  &
  \mu -c_1^0 - C_{\mathrm{abs}}\norm{\eps\sts{t}}_{H^1}
    - C_{\mathrm{abs}}\e^{-\theta_2\sts{ L+\theta_2 t } } - C_{\ift}\alpha
  \\
  \geq
  &
  \sts{\mu -c_1^0} - C_{\mathrm{abs}}C_{\ift}\alpha
    - C_{\mathrm{abs}}\e^{-\theta_2 L } - C_{\ift}\alpha
  \\
  \geq
  &
  \frac{1}{2}\sts{ \mu -c_1^0 },
\end{align*}Integrating from $0$ to $t$, we have
\begin{align}
\label{43}
  \bar{x}^0+\mu t -\sqrt{t+a} - x_1\sts{t}
  \geq
  \frac{L}{4} + \frac{1}{2}\sts{ \mu -c_1^0 } t.
\end{align}
In the similar way, we have
\begin{align}
\label{44}
  x_2\sts{t} - \sts{ \bar{x}^0+\mu t + \sqrt{t+a} }
  \geq
   \frac{L}{4} + \frac{1}{2}\sts{ c_2^0 - \mu } t.
\end{align}
By \eqref{43}, \eqref{44}, the definition of $\phi$ and  the explicit expression of $R_1$ and $R_2$, we obtain
\begin{gather}\label{45}
  \int\left[ \abs{R_1}^2~\phi + \abs{R_2}^2~\sts{1-\phi} \right]<C_{\mathrm{abs}} \e^{-\theta_0\sts{ L + \theta_0 t }}.
\end{gather}
Inserting \eqref{40} and \eqref{45} into \eqref{46}, it is easy to check that \eqref{mloc1} holds. The estimates
\eqref{mloc2}-\eqref{ploc2}
can be proved in the similar way.
\end{proof}

\begin{lemma}
 \label{lem:errnorm}
   \begin{align}
   \label{err:l21}
    &
    \int\abs{u(t,x)}^2~\sts{ \phi - \phi_{0,-} }(t,x)
    +
    \abs{ \Im\int\bar{u}u_x\sts{ \phi - \phi_{0,-} } }
    \leq
    C_{\mathrm{abs}}\e^{-\theta_0\sts{ L + \theta_0 t } }
    +
    C_{\mathrm{abs}}\norm{\eps}_{H^1}^2,
\\
\label{err:l22}
  &
  \int\abs{u(t,x)}^2~\sts{ \phi_{-,0} - \phi }(t,x)
  +
  \abs{ \Im\int\bar{u}u_x\sts{ \phi_{-,0} - \phi } }
    \leq
    C_{\mathrm{abs}}\e^{-\theta_0\sts{ L + \theta_0 t } }
    +
    C_{\mathrm{abs}}\norm{\eps}_{H^1}^2,
    \\
    \label{err:h11}
    &
    \int\abs{u(t,x)}^2~\sts{ \phi - \phi_{+,0} }(t,x)
    +
    \abs{ \Im\int\bar{u}u_x\sts{ \phi - \phi_{+,0} } }
    \leq
    C_{\mathrm{abs}}\e^{-\theta_0\sts{ L + \theta_0 t } }
    +
    C_{\mathrm{abs}}\norm{\eps}_{H^1}^2,
    \\
    \label{err:h12}
    &
    \int\abs{u(t,x)}^2~\sts{ \phi_{0,+} - \phi }(t,x)
    +
    \abs{ \Im\int\bar{u}u_x\sts{ \phi_{0,+} - \phi }}
    \leq
    C_{\mathrm{abs}}\e^{-\theta_0\sts{ L + \theta_0 t } }
    +
C_{\mathrm{abs}}\norm{\eps}_{H^1}^2.
\end{align}
\end{lemma}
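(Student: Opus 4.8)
The plan is to substitute the geometric decomposition $u\sts{t}=R_1\sts{t}+R_2\sts{t}+\eps\sts{t}$ of Lemma \ref{lem:mod2} into each of the four quantities and expand. Writing $\psi$ for a generic one of the four cutoff differences occurring on the left‑hand sides (so $\psi$ is one of $\phi-\phi_{0,-}$, $\phi_{-,0}-\phi$, $\phi-\phi_{+,0}$, $\phi_{0,+}-\phi$, and in each case $0\le\psi\le1$ since $\varphi$ is non‑decreasing), I would use $\abs{u}^2=\sum_{k=1}^2\abs{R_k}^2+\abs{\eps}^2+2\Re\sts{\bar{R}_1R_2+\bar{R}_1\eps+\bar{R}_2\eps}$ together with the analogous expansion of $\Im\sts{\bar{u}u_x}$, and split the resulting terms into four groups. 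The pure radiation terms $\int\abs{\eps}^2\psi$ and $\Im\int\bar{\eps}\eps_x\,\psi$ are bounded by $\norm{\eps}_{H^1}^2$ because $0\le\psi\le1$. The soliton--soliton cross terms $\int\abs{\bar{R}_1R_2}\psi$ and their $\partial_x$‑analogues are $\bigo{\e^{-\theta_0\sts{L+\theta_0t}}}$ by the exponential decay of $R_1,R_2$, the separation \eqref{39} and the computation \eqref{38}--\eqref{40}. The soliton--radiation cross terms, by the Cauchy--Schwarz inequality (using $\psi=\psi^{1/2}\psi^{1/2}$, $\psi\le1$, and that $\partial_xR_k$ decays exponentially like $R_k$), are bounded by $\norm{\eps}_{H^1}^2+\int\sts{\abs{R_1}^2+\abs{R_2}^2}\psi+\int\sts{\abs{\partial_xR_1}^2+\abs{\partial_xR_2}^2}\psi$. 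Thus everything reduces to the pure soliton terms $\int\abs{R_k}^2\psi$, $\Im\int\bar{R}_k\partial_xR_k\,\psi$ and $\int\abs{\partial_xR_k}^2\psi$ for $k=1,2$.

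For these I would use that every cutoff entering $\psi$ has the form $\varphi\sts{\sts{x-\bar{x}^0-\mu_*t}/\sqrt{t+a}}$ with speed $\mu_*\in\ltl{r,2r,4r}$, where $r:=\frac{\omega_2\sts{0}-\omega_1\sts{0}}{c_2\sts{0}-c_1\sts{0}}$, and that $\varphi$ is the cutoff fixed in Section \ref{sect:mon}, hence locally constant ($0$ to the left, $1$ to the right) outside the bounded set $\supp{\varphi'}\subset[-1,1]$ (this is also why the sets $\Omega_w$ and the integrals $\int_{\supp{\varphi'}}$ appear in Section \ref{sect:mon}). Assumption $(c)$ of Theorem \ref{mainthm}, combined with \eqref{41} to pass from $\sts{\omega_k^0,c_k^0}$ to $\sts{\omega_k\sts{0},c_k\sts{0}}$, gives $c_1^0<r\le2r\le4r<c_2^0$ with room to spare, so every relevant $\mu_*$ satisfies $c_1^0<\mu_*<c_2^0$. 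Then, exactly as in the derivation of \eqref{43}--\eqref{44} (equivalently, the proof of Lemma \ref{lem:massloc}, which is the case $\mu_*=2r$), using \eqref{41}--\eqref{42} and $a=L^2/64$ one gets, for $\alpha$ small and $L$ large,
\[
 \bar{x}^0+\mu_*t-\sqrt{t+a}-x_1\sts{t}\ge\frac{L}{4}+\frac{\mu_*-c_1^0}{2}t,\qquad x_2\sts{t}-\bar{x}^0-\mu_*t-\sqrt{t+a}\ge\frac{L}{4}+\frac{c_2^0-\mu_*}{2}t .
\]
Consequently $\varphi\sts{\sts{x-\bar{x}^0-\mu_*t}/\sqrt{t+a}}$ is identically $0$ on $\ltl{\abs{x-x_1\sts{t}}\le\frac{L}{4}+\frac{\mu_*-c_1^0}{2}t}$ and identically $1$ on $\ltl{\abs{x-x_2\sts{t}}\le\frac{L}{4}+\frac{c_2^0-\mu_*}{2}t}$, so every difference $\psi$ vanishes on the bulk of $R_1$ and on the bulk of $R_2$.

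It then follows that the pure soliton integrals are supported in the complement of these two sets, where $\abs{R_k}^2+\abs{\partial_xR_k}^2$ is exponentially small, of order $\e^{-c\sts{L+t}}$ with $c$ comparable to $\sqrt{4\omega_k^0-\sts{c_k^0}^2}$ (by the explicit decay of $Q_{\omega_k,c_k}$ and \eqref{tech:nondeg}); integrating, all of them are $\bigo{\e^{-\theta_0\sts{L+\theta_0t}}}$ with $\theta_0$ as in \eqref{thta6}. Putting the four groups together yields \eqref{err:l21}--\eqref{err:h12}.

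The only real point is the middle step: I must verify \emph{simultaneously}, for the three speeds $r$, $2r$, $4r$, that the corresponding front $\bar{x}^0+\mu_*t$ stays strictly between the two solitons with a buffer growing linearly in $t$ — this is precisely where assumption $(c)$ and the tube estimate \eqref{41} enter — and I must make sure that the universal window half‑width $\sqrt{t+a}=\sqrt{t+L^2/64}$ never eats into that buffer. Once this separation of each front from both solitons is in hand, the remaining estimates are immediate, being a routine repetition of the argument behind \eqref{43}--\eqref{44}.
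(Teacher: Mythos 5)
Your proposal is correct and follows essentially the same route as the paper: decompose $u=R_1+R_2+\eps$, bound the radiation contribution trivially by $\norm{\eps}_{H^1}^2$ since each cutoff difference lies in $[0,1]$, and reduce the soliton contributions to the fact that each front $\bar{x}^0+\mu_* t\pm\sqrt{t+a}$ separates linearly in $t$ from both $x_1(t)$ and $x_2(t)$, which is exactly the ODE argument of \eqref{43}--\eqref{44} driven by \eqref{41}--\eqref{42} and assumption $(c)$. The paper simply writes $\int\abs{u}^2\psi\lesssim\int_{\supp\psi}(\abs{R_1}^2+\abs{R_2}^2+\abs{\eps}^2)$ instead of expanding the cross terms via Cauchy--Schwarz, but this is an immaterial difference.
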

\begin{proof}We only give the proof of \eqref{err:l21}. The estimates \eqref{err:l22}-\eqref{err:h12} can be shown in the similar way. Now by the definition of $\phi$ and $\phi_{0,-}$, it is easy to check that for any time $t>0$
\begin{align*}
  \supp{\sts{ \phi - \phi_{0,-} }(t,\cdot)}
    \subset
  \sts{~\bar{x}^0 + \frac{\mu}{2}t - \sqrt{t+a}~,~\bar{x}^0 + \mu t + \sqrt{t+a}~}.
\end{align*}
Then, it follows from \eqref{19} that,
\begin{align*}
  \int\abs{u(t,x)}^2~\sts{ \phi - \phi_{0,-} }(t,x)
  \leq &
  C_{\mathrm{abs}}\int_{ \supp{\sts{ \phi - \phi_{0,-} }} } \left[
    \abs{R_1}^2 + \abs{R_2}^2 + \abs{\eps}^2
  \right].
\end{align*}
Now, by
\eqref{41} and \eqref{42}, we obtain
\begin{align*}
   \frac{\d}{\d t}\left[ \bar{x}^0+\frac{\mu}{2}t -\sqrt{t+a} - x_1\sts{t} \right]
   =
  &
  \frac{\mu}{2} - \dot{x}_1(t) - \frac{1}{2\sqrt{t+a}}
  \\
  =
  &
  \frac{\mu}{2} - \sts{ \dot{x}_1(t)-c_1(t) } - \sts{ c_1(t)-c_1^0 } -c_1^0 - \frac{1}{2\sqrt{t+a}}
  \\
  \geq
  &
  \frac{\mu}{2} -c_1^0 - C_{\mathrm{abs}}\norm{\eps\sts{t}}_{H^1}
    - C_{\mathrm{abs}}\e^{-\theta_2\sts{ L+\theta_2 t } } - C_{\ift}\alpha
  \\
  \geq
  &
  \sts{\frac{\mu}{2} -c_1^0} - C_{\mathrm{abs}}C_{\ift}\alpha
    - C_{\mathrm{abs}}\e^{-\theta_2 L } - C_{\ift}\alpha
  \\
  \geq
  &
  \frac{1}{4}\sts{ \mu -2c_1^0 }.
\end{align*}
Integrating from $0$ to $t$, we have
\begin{align}
\label{47}
  \bar{x}^0+\frac{\mu}{2} t -\sqrt{t+a} - x_1\sts{t}
  \geq
   \frac{L}{4} + \frac{1}{4}\sts{ \mu -c_1^0 } t.
\end{align}
A similar argument implies that
\begin{align}
\label{48}
  x_2\sts{t} -
  \left[ \bar{x}^0+ \mu t + \sqrt{t+a} \right]
  \geq
   \frac{L}{4} + \frac{1}{4}\sts{ \mu -2c_1^0 } t.
\end{align}
Hence, we have
\begin{align*}
&   \int\abs{u(t,x)}^2~\sts{ \phi - \phi_{0,-} }(t,x)\\
  \leq &
  C_{\mathrm{abs}}\int_{ \supp{\sts{ \phi - \phi_{0,-} }} } \left[
    \abs{R_1}^2 + \abs{R_2}^2 + \abs{\eps}^2
  \right].
  \\
  \leq   &
  C_{\mathrm{abs}}\e^{-2\theta_2\sts{ \bar{x}^0 + \frac{\mu}{2}t - \sqrt{t+a} - x_1 (t)}}
  +
  C_{\mathrm{abs}}\e^{ -2\theta_2\sts{ x_2(t) - \bar{x}^0 - \mu t - \sqrt{t+a} } }
  +
  C_{\mathrm{abs}}\norm{\eps}_{L^2}^2
  \\
  \leq  &
  C_{\mathrm{abs}}\e^{- \theta_0 \sts{ L + \theta_0 t }}
  +
  C_{\mathrm{abs}}\norm{\eps}_{L^2}^2.
\end{align*}
By a similar argument as above, it is not hard to see that
\begin{align*}
  \abs{ \Im\int\bar{u}u_x\sts{ \phi - \phi_{0,-} } }
  \leq  &
  C_{\mathrm{abs}}\e^{- \theta_0 \sts{ L + \theta_0 t }}
  +
  C_{\mathrm{abs}}\norm{\eps}_{H^1}^2.
\end{align*}
This gives \eqref{err:l21}.
\end{proof}

By Lemma \ref{lem:massloc} and \ref{lem:errnorm}, we are able to show the following result.
\begin{lemma}\label{lem:mass}
  \begin{align*}
  \sum_{k=1}^{2}  \abs{M(R_k(t)) - M(R_k(0))}
  + & \sum_{k=1}^{2}
  \abs{P(R_k(t)) - P(R_k(0))}
  \\
  \leq
       &
         \frac{C_{\mathrm{abs}}}{ L }\sup_{0<s<t}\norm{\eps\sts{s}}_{H^1}^2
         +
         C_{\mathrm{abs}}\norm{\eps\sts{0}}_{H^1}^2
         +
         C_{\mathrm{abs}}\e^{-\theta_0 L }
    \\
       &
       +
      C_{\mathrm{abs}} \sum_{k=1}^{2}
         \ntn{
             \abs{\omega_k\sts{t}-\omega_k\sts{0} }^2
             +
             \abs{c_k\sts{t}-c_k\sts{0} }^2
             }.
  \end{align*}
\end{lemma}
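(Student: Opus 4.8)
The plan is to read off each variation $M(R_k(t))-M(R_k(0))$ and $P(R_k(t))-P(R_k(0))$ from the localized mass/momentum identities of Lemmas~\ref{lem:massloc} and~\ref{lem:errnorm}, feeding them the conservation of mass and momentum of $u$ and the monotonicity estimates already established. Since $M$ and $P$ are invariant under translation and phase rotation, $M(R_k(t))=M(Q_{\omega_k(t),c_k(t)})$ and $P(R_k(t))=P(Q_{\omega_k(t),c_k(t)})$ depend smoothly on $(\omega_k(t),c_k(t))$ only, and by assumption $(b)$ together with \eqref{41} the numbers $\omega_2(0)-\omega_1(0)$ and $c_2(0)-c_1(0)$ are nonzero and bounded away from $0$ uniformly in $\delta,L$ (which I will use to divide at the end). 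Write $\Delta M_k=M(R_k(t))-M(R_k(0))$, $\Delta P_k=P(R_k(t))-P(R_k(0))$, $p=\sts{\omega_2(0)-\omega_1(0)}\Delta M_2$, $q=\sts{c_2(0)-c_1(0)}\Delta P_2$, and let $E$ denote the right-hand side of the asserted inequality. First, splitting $M(u(t))=\tfrac12\int\abs{u}^2\phi+\tfrac12\int\abs{u}^2(1-\phi)$ and $P(u(t))=-\tfrac12\Im\int\bar u u_x\phi-\tfrac12\Im\int\bar u u_x(1-\phi)$, applying \eqref{mloc1}--\eqref{ploc2} at times $t$ and $0$, and then using $M(u(t))=M(u(0))$, $P(u(t))=P(u(0))$, yields $\abs{\Delta M_1+\Delta M_2}+\abs{\Delta P_1+\Delta P_2}\le C_{\mathrm{abs}}\e^{-\theta_0 L}+C_{\mathrm{abs}}\norm{\eps(t)}_{H^1}^2+C_{\mathrm{abs}}\norm{\eps(0)}_{H^1}^2$; hence it suffices to control $\abs{\Delta M_2}+\abs{\Delta P_2}$, equivalently $\abs p+\abs q$.

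For the combination $p+q$ I would invoke the two-sided $\mathcal{J}$-estimate. By Lemma~\ref{lem:massloc}, $\int\abs{u}^2\phi=2M(R_2(t))+\bigo{\e^{-\theta_0 L}}+\bigo{\norm{\eps(t)}_{H^1}^2}$ and $\Im\int\bar u u_x\phi=-2P(R_2(t))+\bigo{\e^{-\theta_0 L}}+\bigo{\norm{\eps(t)}_{H^1}^2}$, so
\[
\mathcal{J}(t)=\sts{\omega_2(0)-\omega_1(0)}M(R_2(t))+\sts{c_2(0)-c_1(0)}P(R_2(t))+\bigo{\e^{-\theta_0 L}}+\bigo{\norm{\eps(t)}_{H^1}^2},
\]
and likewise at $t=0$. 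Combined with the (already established) two-sided bound \eqref{est:J ref}, this gives $\abs{p+q}\le E+C_{\mathrm{abs}}\norm{\eps(t)}_{H^1}^2$.

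To separate $p$ and $q$ I would use the auxiliary functionals of Corollary~\ref{coro:mono2}. By Lemma~\ref{lem:errnorm}, replacing $\phi$ by $\phi_{\pm,0}$ or $\phi_{0,\pm}$ in the integrals costs only $\bigo{\e^{-\theta_0 L}}+\bigo{\norm{\eps(t)}_{H^1}^2}$, so, up to such errors and at both times, $\mathcal{J}_{+,0},\mathcal{J}_{-,0},\mathcal{J}_{0,+},\mathcal{J}_{0,-}$ equal $a\sts{\omega_2(0)-\omega_1(0)}M(R_2)+b\sts{c_2(0)-c_1(0)}P(R_2)$ with $(a,b)\in\ltl{(2,1),(\tfrac12,1),(1,2),(1,\tfrac12)}$ respectively, and Corollary~\ref{coro:mono2} gives the one-sided bounds $ap+bq\le E+C_{\mathrm{abs}}\norm{\eps(t)}_{H^1}^2$. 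Setting $E'=E+C_{\mathrm{abs}}\norm{\eps(t)}_{H^1}^2$: adding the two-sided inequality $-(p+q)\le E'$ to $2p+q\le E'$ gives $p\le 2E'$, and adding it to $\tfrac12 p+q\le E'$ gives $p\ge -4E'$; symmetrically $\abs q\le 4E'$. Thus $\abs p+\abs q\le C_{\mathrm{abs}}E'$; dividing by $\abs{\omega_2(0)-\omega_1(0)}$ and $\abs{c_2(0)-c_1(0)}$ bounds $\abs{\Delta M_2}+\abs{\Delta P_2}$ by $C_{\mathrm{abs}}E'$, and then the first paragraph bounds $\abs{\Delta M_1}+\abs{\Delta P_1}$ by $C_{\mathrm{abs}}E'$ as well. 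Finally, the refined estimate \eqref{est:rem} shows $\norm{\eps(t)}_{H^1}^2\le C_{\mathrm{abs}}E$, hence $E'\le C_{\mathrm{abs}}E$, so the whole bound is of the asserted form.

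The main obstacle is the third step and its bookkeeping: the monotonicity only delivers a one-sided bound for each of $\mathcal{J}_{\pm,0}(t)-\mathcal{J}_{\pm,0}(0)$ and $\mathcal{J}_{0,\pm}(t)-\mathcal{J}_{0,\pm}(0)$, so one must pick the auxiliary drift speeds $\mu_{\pm,0},\mu_{0,\pm}$ genuinely different from $\mu$ in order to obtain linear combinations of $\Delta M_2,\Delta P_2$ with distinct coefficients, yet still keep the supports of $\phi_{\pm,0}-\phi$ and $\phi_{0,\pm}-\phi$ squeezed strictly between the two solitary waves so that the cut-off comparison of Lemma~\ref{lem:errnorm} remains valid; it is precisely the single genuinely two-sided estimate --- the one for $\mathcal{J}$ --- that then upgrades all of these one-sided bounds to two-sided control of $p$ and $q$.
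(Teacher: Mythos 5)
Your proposal is correct and follows essentially the same route as the paper: the two-sided bound \eqref{est:J ref} on $\mathcal{J}(t)-\mathcal{J}(0)$ combined with the one-sided monotonicity of $\mathcal{J}_{\pm,0}$, $\mathcal{J}_{0,\pm}$ (Corollary \ref{coro:mono2}) and the localization/cut-off comparison Lemmas \ref{lem:massloc}--\ref{lem:errnorm} to get two-sided control of $M(R_2)$ and $P(R_2)$, then mass/momentum conservation to transfer the bounds to $R_1$. The only cosmetic differences are that you organize the step for $R_2$ as a small linear system in $(p,q)$ with coefficients $(a,b)$, whereas the paper isolates the mass by pairing functionals whose momentum coefficients coincide (e.g.\ $\mathcal{J}_{+,0}-\mathcal{J}$), and that you handle the sum $\Delta M_1+\Delta M_2$ via the localization lemmas rather than by directly expanding $\int\abs{R_1+R_2+\eps}^2$ with the orthogonality conditions.
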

    \begin{proof}
On one hand, from the expression of $\mathcal{J}_{+,0}\sts{t}$ and $\mathcal{J}\sts{t},$ we have
         \begin{align*}
                    \mathcal{J}_{+,0}\sts{t} - \mathcal{J}\sts{t}
          - &
          \frac{\omega_2\sts{0}-\omega_1\sts{0}}{2}  \int\abs{R_2\sts{t}}^2 \\
         =
          &
          \frac{\omega_2\sts{0}-\omega_1\sts{0}}{2}\sts{ \int\abs{u}^2~\phi - \int\abs{R_2\sts{t}}^2 }
          \\
          &
          +
          \sts{\omega_2\sts{0}-\omega_1\sts{0}}
          \int\abs{u}^2~\sts{ \phi_{+,0} - \phi} \\
          &
          -
          \frac{c_2\sts{0}-c_1\sts{0}}{2}\Im\int\bar{u}u_x\sts{ \phi_{+,0} - \phi }
          .
        \end{align*}
    Combining \eqref{mloc1} with \eqref{err:h11}, we have for any $t\in [0, T^*]$
        \begin{align*}
          \abs{ \mathcal{J}_{+,0}\sts{t} - \mathcal{J}\sts{t} - \frac{\omega_2\sts{0}-\omega_1\sts{0}}{2} \int\abs{R_2\sts{t}}^2 }
          \leq
          C_{\mathrm{abs}}\norm{\eps\sts{t}}_{H^1}^2 + C_{\mathrm{abs}}\e^{-\theta_0\sts{L + \theta_0 t }}.
        \end{align*}
Thus,
        \begin{align*}
          -\ntn{ \mathcal{J}_{+,0}\sts{t} - \mathcal{J}\sts{t} - \frac{\omega_2\sts{0}-\omega_1\sts{0}}{2} \int\abs{R_2\sts{t}}^2 }
          &
          \leq
          C_{\mathrm{abs}}\norm{\eps\sts{t}}_{H^1}^2 + C_{\mathrm{abs}}\e^{-\theta_0\sts{L + \theta_0 t }},
          \\
          \mathcal{J}_{+,0}\sts{0} - \mathcal{J}\sts{0} - \frac{\omega_2\sts{0}-\omega_1\sts{0}}{2} \int\abs{R_2\sts{0}}^2
          &
          \leq
          C_{\mathrm{abs}}\norm{\eps\sts{0}}_{H^1}^2 + C_{\mathrm{abs}}\e^{-\theta_0 L},
        \end{align*}
        which implies that
        \begin{align*}
        \nonumber
         &
            \frac{\omega_2\sts{0}-\omega_1\sts{0}}{2} \left[ \int\abs{R_2\sts{t}}^2 - \int\abs{R_2\sts{0}}^2 \right]
          +
            \sts{ \mathcal{J}\sts{t} - \mathcal{J}\sts{0} }
          -
            \sts{ \mathcal{J}_{+,0}\sts{t} -\mathcal{J}_{+,0}\sts{0} }
          \\
          \leq
          &
          C_{\mathrm{abs}}\norm{\eps\sts{0}}_{H^1}^2
          +
          C_{\mathrm{abs}}\norm{\eps\sts{t}}_{H^1}^2
          +
          C_{\mathrm{abs}}\e^{-\theta_0 L},
        \end{align*}
   which together with \eqref{est:rem} and \eqref{est:J ref} implies that
        \begin{align}
        \nonumber
          &
          \frac{\omega_2\sts{0}-\omega_1\sts{0}}{2}
          \left[
            \int\abs{R_2\sts{t}}^2 - \int\abs{R_2\sts{0}}^2
          \right]
          \\
          \nonumber
          \leq
          &
          \abs{ \mathcal{J}\sts{t} - \mathcal{J}\sts{0} }
            +
          \left[ \mathcal{J}_{+,0}\sts{t} -\mathcal{J}_{+,0}\sts{0} \right]
          +
          C_{\mathrm{abs}}\norm{\eps\sts{0}}_{H^1}^2
          +
          C_{\mathrm{abs}}\norm{\eps\sts{t}}_{H^1}^2
          +
          C_{\mathrm{abs}}\e^{-\theta_0 L}
          \\
          \nonumber
          \leq
          &
            \frac{C_{\mathrm{abs}}}{ L }\sup_{0<s<t}\norm{\eps\sts{s}}_{H^1}^2
            +
            C_{\mathrm{abs}}\norm{\eps\sts{0}}_{H^1}^2
            +
            C_{\mathrm{abs}}\norm{\eps\sts{t}}_{H^1}^2
            +
            C_{\mathrm{abs}}\e^{-\theta_0 L }
          \\
          \nonumber
          &
          +    C_{\mathrm{abs}}
          \sum_{k=1}^{2}
            \left[
                \abs{\omega_k\sts{t}-\omega_k\sts{0} }^2
                +
                \abs{c_k\sts{t}-c_k\sts{0} }^2
            \right]
          \\
          \nonumber
          \leq
          &
            \frac{C_{\mathrm{abs}}}{ L }\sup_{0<s<t}\norm{\eps\sts{s}}_{H^1}^2
            +
            C_{\mathrm{abs}}\norm{\eps\sts{0}}_{H^1}^2
            +
            C_{\mathrm{abs}}\e^{-\theta_0 L }
          \\
          \label{49}
          &
          +  C_{\mathrm{abs}}
          \sum_{k=1}^{2}
            \left[
                \abs{\omega_k\sts{t}-\omega_k\sts{0} }^2
                +
                \abs{c_k\sts{t}-c_k\sts{0} }^2
            \right].
        \end{align}
On the other hand, we have
        \begin{align*}
                    \mathcal{J}\sts{t} - \mathcal{J}_{-,0}\sts{t} - &  \frac{\omega_2\sts{0}-\omega_1\sts{0}}{4}
            \int\abs{R_2\sts{t}}^2
          \\
         =
          &
          \frac{\omega_2\sts{0}-\omega_1\sts{0}}{4}\sts{\int\abs{u}^2~\phi - \int\abs{R_2\sts{t}}^2 } \\
          &
          +
          \frac{\omega_2\sts{0}-\omega_1\sts{0}}{4}\int\abs{u}^2~\sts{ \phi - \phi_{-,0} }
          \\
          &
          -
          \frac{c_2\sts{0}-c_1\sts{0}}{2}\Im\int\bar{u}u_x~\sts{ \phi - \phi_{-,0} }.
        \end{align*}
Thus, by \eqref{mloc1} and \eqref{err:l22}, we have for any $t\in [0, T^*]$
        \begin{align*}
          &
          \abs{
            \mathcal{J}\sts{t} - \mathcal{J}_{-,0}\sts{t} - \frac{\omega_2\sts{0}-\omega_1\sts{0}}{4}
            \int\abs{R_2\sts{t}}^2
          }
          \leq
          C_{\mathrm{abs}}\norm{\eps\sts{t}}_{H^1}^2 + C_{\mathrm{abs}}\e^{-\theta_0\sts{L + \theta_0 t }},
        \end{align*}
        which implies that,
        \begin{gather*}
           \mathcal{J}\sts{t} - \mathcal{J}_{-,0}\sts{t} - \frac{\omega_2\sts{0}-\omega_1\sts{0}}{4}
            \int\abs{R_2\sts{t}}^2
          \leq
          C_{\mathrm{abs}}\norm{\eps\sts{t}}_{H^1}^2 + C_{\mathrm{abs}}\e^{-\theta_0\sts{L + \theta_0 t }},
          \\
          -\ntn{ \mathcal{J}\sts{0} - \mathcal{J}_{-,0}\sts{0} - \frac{\omega_2\sts{0}-\omega_1\sts{0}}{4}
            \int\abs{R_2\sts{0}}^2 }
          \leq
          C_{\mathrm{abs}}\norm{\eps\sts{0}}_{H^1}^2 + C_{\mathrm{abs}}\e^{-\theta_0 L}.
        \end{gather*}
        Therefore,
        \begin{align*}
        &
            \frac{\omega_2\sts{0}-\omega_1\sts{0}}{4}
             \left[
                \int\abs{R_2\sts{0}}^2 - \int\abs{R_2\sts{t}}^2
             \right]
          +
            \sts{ \mathcal{J}\sts{t} - \mathcal{J}\sts{0} }
          -
            \sts{ \mathcal{J}_{-,0}\sts{t} -\mathcal{J}_{-,0}\sts{0} }
          \\
          \leq
          &
          C_{\mathrm{abs}}
            \sup_{0\leq s\leq t}\norm{\eps\sts{0}}_{H^1}^2
          + C_{\mathrm{abs}}\e^{-\theta_0 L},
        \end{align*}
   which together with \eqref{est:rem} implies that
        \begin{align}
        \nonumber
          &
          \frac{\omega_2\sts{0}-\omega_1\sts{0}}{4}
             \left[
                \int\abs{R_2\sts{0}}^2 - \int\abs{R_2\sts{t}}^2
             \right]
          \\
          \nonumber
          \leq
          &
          \abs{ \mathcal{J}\sts{t} - \mathcal{J}\sts{0} }
            +
          \ntn{ \mathcal{J}_{-,0}\sts{t} -\mathcal{J}_{-,0}\sts{0} }
          +
            C_{\mathrm{abs}}\norm{\eps\sts{0}}_{H^1}^2
          + C_{\mathrm{abs}}\norm{\eps\sts{t}}_{H^1}^2
          + C_{\mathrm{abs}}\e^{-\theta_0 L}
          \\
          \nonumber
          \leq
          &
            \frac{C_{\mathrm{abs}}}{ L }\sup_{0<s<t}\norm{\eps\sts{s}}_{H^1}
            +
            C_{\mathrm{abs}}\norm{\eps\sts{0}}_{H^1}^2
            +
            C_{\mathrm{abs}}\e^{-\theta_0 L }
          \\
          \label{50}
          &
          +   C_{\mathrm{abs}}
          \sum_{k=1}^{2}
                     \left[
                \abs{\omega_k\sts{t}-\omega_k\sts{0} }^2
                +
                \abs{c_k\sts{t}-c_k\sts{0} }^2
            \right].
        \end{align}
Combining \eqref{49} with \eqref{50}, we have
\begin{align}
\label{56}
\nonumber
    \abs{ \int\abs{R_2\sts{t}}^2 - \int\abs{R_2\sts{0}}^2 }
    \leq
       &
         \frac{C_{\mathrm{abs}}}{ L }\sup_{0<s<t}\norm{\eps\sts{s}}_{H^1}^2
         +
         C_{\mathrm{abs}}\norm{\eps\sts{0}}_{H^1}^2
         +
         C_{\mathrm{abs}}\e^{-\theta_0 L }
    \\
       &
       +    C_{\mathrm{abs}}
       \sum_{k=1}^{2}
         \ntn{
             \abs{\omega_k\sts{t}-\omega_k\sts{0} }^2
             +
             \abs{c_k\sts{t}-c_k\sts{0} }^2
             }.
\end{align}
Similar argument implies that
\begin{align*}
    \abs{
        \Im\int\sts{\bar{R}_2\partial_xR_2}\sts{t}
        -
        \Im\int\sts{\bar{R}_2\partial_xR_2}\sts{0} }
    \leq
       &
         \frac{C_{\mathrm{abs}}}{ L }\sup_{0<s<t}\norm{\eps\sts{s}}_{H^1}^2
         +
         C_{\mathrm{abs}}\norm{\eps\sts{0}}_{H^1}^2
         +
         C_{\mathrm{abs}}\e^{-\theta_0 L }
    \\
       &
       +   C_{\mathrm{abs}}
       \sum_{k=1}^{2}
         \ntn{
             \abs{\omega_k\sts{t}-\omega_k\sts{0} }^2
             +
             \abs{c_k\sts{t}-c_k\sts{0} }^2
             }.
\end{align*}

This concludes the estimates of the solitary wave $R_2.$ In order to obtain the estimates of the solitary wave $R_1,$ we will make use of the conservation laws of mass and momentum and the orthogonality condition \eqref{53}.
Firstly, by the mass conservation and the orthogonality condition \eqref{53},
\begin{align*}
  &
  \nonumber
  \abs{
    \int\abs{R_1\sts{t}+R_2\sts{t}+\eps\sts{t}}^2
    -
    \int\abs{R_1\sts{0}+R_2\sts{0}+\eps\sts{0}}^2
  }
  \\
  \nonumber
  =
  &
  \abs{
    \sum_{k=1}^{2}
    \sts{\int\abs{R_k\sts{t}}^2-\int\abs{R_k\sts{0}}^2}
    +
    2\Re\int
    \ntn{
        \sts{\bar{R}_1R_2}\sts{t} - \sts{\bar{R}_1R_2}\sts{0}
    }
    +
    \int\sts{ \abs{\eps\sts{t}}^2 - \abs{\eps\sts{0}}^2 }
  }
  \\
  \geq
  &
  \abs{
    \sum_{k=1}^{2}
    \sts{\int\abs{R_k\sts{t}}^2-\int\abs{R_k\sts{0}}^2}
    }
    -
    2\int
    \ntn{
        \abs{\sts{\bar{R}_1R_2}\sts{t}}
        +
        \abs{\sts{\bar{R}_1R_2}\sts{0}}
    }
    -
    \int\sts{ \abs{\eps\sts{t}}^2 + \abs{\eps\sts{0}}^2 },
\end{align*}
which together with \eqref{40} implies that
\begin{align}\label{55}
  \abs{
    \sum_{k=1}^{2}
    \sts{\int\abs{R_k\sts{t}}^2-\int\abs{R_k\sts{0}}^2}
  }
  \leq
  C_{\mathrm{abs}}\e^{-\theta_0 L}
  +
  C_{\mathrm{abs}}\sts{\norm{\eps\sts{0}}_{L^2}^2 + \norm{\eps\sts{t}}_{L^2}^2}.
\end{align}
Secondly, by \eqref{est:rem} and \eqref{55}, we have
\begin{align}\label{56}
\nonumber
  \abs{
    \int\abs{R_1\sts{t}}^2-\int\abs{R_1\sts{0}}^2
  }
  \leq
  &
  \abs{
    \int\abs{R_2\sts{t}}^2-\int\abs{R_2\sts{0}}^2
  }
  +
  C_{\mathrm{abs}}\e^{-\theta_0 L}
  +
  C_{\mathrm{abs}}\sup_{0\leq s\leq t}\norm{\eps\sts{s}}_{L^2}^2
  \\
  \nonumber
  \leq
  &
  \frac{C_{\mathrm{abs}}}{ L }\sup_{0<s<t}\norm{\eps\sts{s}}_{H^1}^2
         +
         C_{\mathrm{abs}}\norm{\eps\sts{0}}_{H^1}^2
         +
         C_{\mathrm{abs}}\e^{-\theta_0 L }
    \\
       &
       + C_{\mathrm{abs}}
       \sum_{k=1}^{2}
         \ntn{
             \abs{\omega_k\sts{t}-\omega_k\sts{0} }^2
             +
             \abs{c_k\sts{t}-c_k\sts{0} }^2
             }
\end{align}
In a similar way, we have
\begin{align}\label{57}
  \nonumber
  &
  \abs{
    \Im\int\bar{R}_1\sts{t}\partial_xR_1\sts{t}
    -
    \Im\int\bar{R}_1\sts{0}\partial_xR_1\sts{0}
  }
  \\
  \nonumber
  \leq
  &
  \frac{C_{\mathrm{abs}}}{ L }\sup_{0<s<t}\norm{\eps\sts{s}}_{H^1}^2
         +
         C_{\mathrm{abs}}\norm{\eps\sts{0}}_{H^1}^2
         +
         C_{\mathrm{abs}}\e^{-\theta_0 L }
    \\
       &
       +   C_{\mathrm{abs}}
       \sum_{k=1}^{2}
         \ntn{
             \abs{\omega_k\sts{t}-\omega_k\sts{0} }^2
             +
             \abs{c_k\sts{t}-c_k\sts{0} }^2
             }.
\end{align}
This ends the proof.
\end{proof}

Next, by the nondegenerate condition $\det[ d''\sts{\omega_k^0,c_k^0} ]<0,$ for $k=1,2$,  we have for sufficiently small $\alpha$ and sufficiently large $L,$
\begin{align}\label{58}
  \det\left[ d''\sts{\omega_k\sts{0},c_k\sts{0}}\right]
  <
  \frac{1}{2}
  \det\left[ d''\sts{\omega_k^0,c_k^0} \right]<0.
\end{align}
By the smallness of $\abs{\omega_k\sts{t}-\omega_k\sts{0}}$ and $\abs{c_k\sts{t}-c_k\sts{0}}$, we have the following expression,
    \begin{align*}
        \begin{pmatrix}
            M\left(R_k(t)\right)- M\left(R_k(0)\right)  \\
            P\left(R_k(t)\right) - P\left(R_k(0)\right)
        \end{pmatrix}
        =
        &
        d''\sts{\omega_k\sts{0}, c_k\sts{ 0 }}
        \begin{pmatrix}
          \omega_k\sts{t}-\omega_k\sts{0} \\
          c_k\sts{t}-c_k\sts{0}
        \end{pmatrix}
        \\
        &
        +
        \bigo{ \abs{ \omega_k\sts{t}-\omega_k\sts{0} }^2 + \abs{ c_k\sts{t}-c_k\sts{0} }^2 },
    \end{align*}
    it follows that, for $k=1,2,$
    \begin{align*}
      &
      \abs{ \omega_k\sts{t}-\omega_k\sts{0} }
      +
      \abs{ c_k\sts{t}-c_k\sts{0} }
      \\
      <
      &
      C_{\mathrm{abs}}
        \abs{ \int\abs{R_k\sts{t}}^2-\int\abs{R_k\sts{0}}^2 }
        +
      C_{\mathrm{abs}}
        \abs{
            \Im\int\sts{\bar{R}_k\partial_xR_k}\sts{t}
            -
            \Im\int\sts{\bar{R}_k\partial_xR_k}\sts{0}
        }.
    \end{align*}
By Lemma \ref{lem:mass}, we have
\begin{align*}
  \abs{ \omega_k\sts{t}-\omega_k\sts{0} }
  +
  \abs{ c_k\sts{t}-c_k\sts{0} }
  <
         &
         \frac{C_{\mathrm{abs}}}{ L }\sup_{0\leq s\leq t}\norm{\eps\sts{s}}_{H^1}^2
         +
         C_{\mathrm{abs}}\norm{\eps\sts{0}}_{H^1}^2
         +
         C_{\mathrm{abs}}\e^{-\theta_0 L }
    \\
       &
       +
         C_{\mathrm{abs}}
        \sum_{k=1}^{2} \ntn{
             \abs{\omega_k\sts{t}-\omega_k\sts{0} }^2
             +
             \abs{c_k\sts{t}-c_k\sts{0} }^2
                      },
\end{align*}
which together with the smallness of  $\abs{\omega_k\sts{t}-\omega_k\sts{0}}$ and $\abs{c_k\sts{t}-c_k\sts{0}}$ implies
\begin{align}\label{est:para}
  \abs{ \omega_k\sts{t}-\omega_k\sts{0} }
  +
  \abs{ c_k\sts{t}-c_k\sts{0} }
  <
         &
         \frac{C_{\mathrm{abs}}}{ L }\sup_{0\leq s\leq t}\norm{\eps\sts{s}}_{H^1}^2
         +
         C_{\mathrm{abs}}\norm{\eps\sts{0}}_{H^1}^2
         +
         C_{\mathrm{abs}}\e^{-\theta_0 L }.
\end{align}

\item{\textit{ Conclusion. }} By \eqref{est:rem} and \eqref{est:para}, we obtain
\begin{align*}
 \norm{\eps\sts{t}}_{H^1}^2
    \leq
    &
    \frac{C_{\mathrm{abs}}}{ L }\sup_{0\leq s\leq t}\norm{\eps\sts{s}}_{H^1}^2
    +
    C_{\mathrm{abs}}\norm{\eps\sts{0}}_{H^1}^2
    +
    C_{\mathrm{abs}}\e^{-\theta_0 L }
  \\
  &
  +
  \frac{ C_{\mathrm{abs}} }{L^2} \sup_{0\leq s\leq t}\norm{\eps\sts{s}}_{H^1}^4
  +
  C_{\mathrm{abs}}~\e^{-2\theta_0 L } + \norm{\eps\sts{0}}_{H^1}^2\smallo{\norm{\eps\sts{0}}_{H^1}}
  \\
    \leq
    &
    \frac{C_{\mathrm{abs}}}{ L }\sup_{0\leq s\leq t}\norm{\eps\sts{s}}_{H^1}^2
    +
    C_{\mathrm{abs}}\norm{\eps\sts{0}}_{H^1}^2
    +
    C_{\mathrm{abs}}\e^{-\theta_0 L }.
\end{align*}
Taking $L$ is large enough, we have
\begin{align}
\label{22}
    \sup_{0\leq s\leq t}\norm{\eps\sts{s}}_{H^1}^2
    \leq
    C_{\mathrm{abs}}\norm{\eps\sts{0}}_{H^1}^2
    +
    C_{\mathrm{abs}}\e^{-\theta_0 L },
\end{align}
which together with \eqref{est:para} implies that
\begin{align}
\label{24}
    \sup_{0\leq s\leq t}\norm{\eps\sts{s}}^2_{H^1} + \sum^2_{k=1}
   \left[ \abs{ \omega_k\sts{t}-\omega_k\sts{0} }
  +
  \abs{ c_k\sts{t}-c_k\sts{0} } \right]
    \leq
    C_{\mathrm{abs}}\norm{\eps\sts{0}}^2_{H^1}
    +
    C_{\mathrm{abs}}\e^{-\theta_0  L }.
\end{align}
Therefore, we have
\begin{align}
\label{25}
\notag
 & \inf_{\substack{ x_2^0-x_1^0>\frac{L}{2},\\ \gamma_1^0,~\gamma_2^0\in\R }}
    \normhone{ u\sts{t,\cdot}-\sum^2_{k=1}Q_{\omega_k^{0},c_k^{0}}\sts{ \cdot-x_k^0}\e^{\i\gamma_k^0} }
  \\
  \notag
  \leqslant &
  \normhone{ u\sts{t,\cdot}-\sum^2_{k=1}Q_{\omega_k^{0},c_k^{0}}\sts{ \cdot-x_k\sts{t} }\e^{\i\gamma_k\sts{t}} }
  \\
  \notag
  \leqslant
  &
    \normhone{ u\sts{t, \cdot}-\sum^2_{k=1}Q_{\omega_k\sts{t},c_k\sts{t}}\sts{ \cdot-x_k\sts{t} }\e^{\i\gamma_k\sts{t}} }
    +
    C_{\mathrm{abs}} \sum^2_{k=1}\left(\abs{\omega_k(t)-\omega_k^0}+\abs{c_k(t)-c_k^0}\right)
  \\
  \notag
  \leqslant
  &
    \normhone{\varepsilon\sts{t}}
    +
    C_{\mathrm{abs}} \sum^2_{k=1}\left(\abs{\omega_k\sts{t}-\omega_k\sts{0}} + \abs{c_k\sts{t}-c_k\sts{0}}+\abs{\omega_k\sts{0}-\omega_k^{0}}+ \abs{c_k\sts{0}-c_k^{0}}\right)
    \\
    \notag
 \leqslant   &\;
  C_{\mathrm{abs}}\sts{\normhone{\varepsilon\sts{0}}
    +\sum^2_{k=1}\left(\abs{\omega_k\sts{0}-\omega_k^{0}}+ \abs{c_k\sts{0}-c_k^{0}}\right)} + C_{\mathrm{abs}} e^{-\theta_0 \frac{L}{2}}
   \\
 \leqslant   & \;   C_{\mathrm{abs}}C_{\ift} \left(\delta + e^{-\theta_0 \frac{L}{2}}\right).
\end{align}
By choosing $A_0>2C_{\mathrm{abs}}C_{\ift}$, we obtain a contradiction with the definition of $T^{\ast}.$ Thus, $T^{\ast}=\infty.$ This concludes the proof.
\end{enumerate}

\section*{Appendix A}\label{app:cal}

By the explicit expression of the solitary wave, we have
\begin{align*}
  \partial_xQ_{\omega,c}
  =
  \ltl{
    \partial_x\Phi_{\omega,c}
    +
    \i \frac{c}{2}\Phi_{\omega,c}
    -
    \i \frac{1}{2\sigma+2}\Phi_{\omega,c}^{2\sigma+1}
  }
  \exp \i\left\{
    \frac{c}{2}x-\frac{1}{2\sigma +
      2}\int^{x}_{-\infty}\Phi_{\omega,c}^{2\sigma}(y)\d y\right\}.
\end{align*}
Hence we have
\begin{align}\label{mass:sw}
  \int \abs{Q_{\omega,c}}^2 = \int \Phi_{\omega,c}^2,
\end{align}
and
\begin{align}
  \int \abs{\partial_xQ_{\omega,c}}^2
  =
  &
  \int \sts{\partial_x\Phi_{\omega,c}}^2
  +
  \int
  \sts{ \frac{c}{2}\Phi_{\omega,c} - \frac{1}{2\sigma+2}\Phi_{\omega,c}^{2\sigma+1} }^2 \notag
  \\
  =
  &
  \int \sts{\partial_x\Phi_{\omega,c}}^2
  +
  \frac{c^2}{4}\int \Phi_{\omega,c}^2
  +
  \frac{1}{\sts{2\sigma+2}^2}\int \Phi_{\omega,c}^{4\sigma+2}
  -
  \frac{c}{2\sigma+2}\int \Phi_{\omega,c}^{2\sigma+2}, \label{kine:sw}
\end{align}
and
\begin{align}
  \Im\int\bar{Q}_{\omega,c}\partial_xQ_{\omega,c}
  =
  &
  \Im\int \Phi_{\omega,c}
  \ltl{
    \partial_x\Phi_{\omega,c}
    +
    \i \frac{c}{2}\Phi_{\omega,c}
    -
    \i \frac{1}{2\sigma+2}\Phi_{\omega,c}^{2\sigma+1}
  } \notag
  \\
  =
  &
  \int
  \ltl{ \frac{c}{2}\Phi_{\omega,c}^2 - \frac{1}{2\sigma+2}\Phi_{\omega,c}^{2\sigma+2} }. \label{mom:sw}
\end{align}
Therefore,
 we have
\begin{align*}
  &
  2M\sts{Q_{\omega,c}}\norm{ \partial_x Q_{\omega,c}}_{L^2}^2 - 4\left[P\sts{ {Q}_{\omega,c} } \right]^2
  \\
  =
  &
  \int \Phi_{\omega,c}^2
  \ltl{
      \int \sts{\partial_x\Phi_{\omega,c}}^2
      +
      \frac{c^2}{4}\int \Phi_{\omega,c}^2
      +
      \frac{1}{\sts{2\sigma+2}^2}\int \Phi_{\omega,c}^{4\sigma+2}
      -
      \frac{c}{2\sigma+2}\int \Phi_{\omega,c}^{2\sigma+2}
  }
  \\
  &
  -
  \sts{ \int
  \ltl{ \frac{c}{2}\Phi_{\omega,c}^2 - \frac{1}{2\sigma+2}\Phi_{\omega,c}^{2\sigma+2} } }^2
  \\
  =
  &
  \int \Phi_{\omega,c}^2\int \sts{\partial_x\Phi_{\omega,c}}^2
  +
  \frac{1}{\sts{2\sigma+2}^2}\int \Phi_{\omega,c}^2\int \Phi_{\omega,c}^{4\sigma+2}
  -
  \frac{1}{\sts{2\sigma+2}^2}\sts{ \int \Phi_{\omega,c}^{2\sigma+2} }^2
  \\
  \geq
  &
  \int \Phi_{\omega,c}^2\int \sts{\partial_x\Phi_{\omega,c}}^2>0.
\end{align*}
where we use the explicit expression \eqref{phi} of $  \Phi_{\omega,c}$ in the last inequality.

\section*{Appendix B}
\label{app:expand}
In this appendix, we prove Lemma \ref{lem:tlexp}.
\begin{proof}[Proof of Lemma \ref{lem:tlexp}]First note that
\begin{align*}
  \S\sts{t}
  =
  &
  E\sts{u\sts{t}} + \J_{\mathrm{sum}}\sts{u\sts{t}}
  \\
  =
  &
  \frac{1}{2}\int\abs{ u_x\sts{t} }^2
  +
  \frac{1}{2(\sigma+1)}\Im\int_{-\infty}^\infty
  |u|^{2\sigma}\bar{u}u_x
  \\
  &
  +
    \ltl{
        \frac{ \omega_1\sts{0} }{2}\int\abs{u}^2~\phi
         -
         \frac{ c_1\sts{0} }{2}\Im\int\bar{u}u_x~\phi
    }
  \\
  &
    +
    \ltl{
        \frac{ \omega_2\sts{0} }{2}\int\abs{u}^2~\sts{1-\phi}
         -
         \frac{ c_2\sts{0} }{2}\Im\int\bar{u}u_x~\sts{1-\phi}
    }.
\end{align*}
Now we will expand the right hand side of the above equality.

\textbf{\textit{ Term $ \frac{1}{2}\int\abs{ u_x\sts{t} }^2 $ }:}
By the weak interaction \eqref{26} between the solitary waves, we have
\begin{align}
\label{t1}
\notag
  \int\abs{ u_x\sts{t} }^2
  =
  &
  \int
  \abs{
    \partial_xR_1 + \partial_xR_2 + \partial_x\eps
    }^2
  \\
  \notag
  =
  &
  \int
  \ntn{
    \abs{ \partial_xR_1 }^2 + \abs{ \partial_xR_2 }^2 + \abs{ \partial_x\eps }^2
  }
  +
  2\Re\int\partial_{x}\bar{R}_1\partial_{x}R_2
  \\
  \notag
  &
  -
  2\Re\int
  \ntn{ \partial_{x,x}R_1\eps + \partial_{x,x}R_2\eps }
  \\
  \notag
  =
  &
  \int
  \ntn{
    \abs{ \partial_xR_1 }^2 + \abs{ \partial_xR_2 }^2 + \abs{ \partial_x\eps }^2
  }
  \\
  \notag
  =
  &
  \int
  \ntn{
    \abs{ \partial_xR_1 }^2 + \abs{ \partial_xR_2 }^2 + \abs{ \partial_x\eps }^2
  }
  \\
  &
  -
  2\Re\int
  \ntn{ \partial_{x,x}R_1\eps + \partial_{x,x}R_2\eps }
  +
  \bigo{\e^{ -\theta_0 \sts{L + \theta_0 t} }}
\end{align}

\textbf{
    \textit{
        Term $\frac{\omega_2\sts{0}}{2}\int \abs{u}^2\phi$
    }:
} By \eqref{40} and the definition of $\phi,$
the simple calculations give
\begin{align}
\label{t2}
\notag
\frac{\omega_2\sts{0}}{2}\int \abs{u}^2\phi
  =
  &
  \frac{\omega_2\sts{0}}{2}\int \abs{R_1+R_2+\eps}^2\phi
  \\
  \notag
  =
  &
  \frac{\omega_2\sts{0}}{2}
  \int
  \ntn{
    \abs{R_1}^2\phi
    + \abs{R_2}^2\phi
    + \abs{\eps}^2\phi
  }
    + \omega_2\sts{0}\Re\int
    \ntn{
        R_1\bar{R}_2\phi
        +
        R_1\bar{\eps}\phi
        +
        R_2\bar{\eps}\phi
    }
  \\
  \notag
  =
  &
  \frac{\omega_2\sts{0}}{2}\Re\int
    \ntn{
        \abs{R_2}^2
        +
        2 R_2\bar{\eps}
        +
        \abs{\eps}^2\phi
    }
    +
   \frac{\omega_2\sts{0}}{2}\int
    \ntn{
    \abs{R_1}^2\phi
    +
    \abs{R_2}^2\sts{\phi - 1 }
    }
    \\
    \notag
    &
    + \omega_2\sts{0}\Re\int
    \ntn{
        R_1\bar{R}_2\phi
        +
        R_1\bar{\eps}\phi
        +
        R_2\bar{\eps}\sts{ \phi - 1 }
    }
   \\
   \notag
   =
   &
  \frac{\omega_2\sts{0}}{2}\int
    \ntn{
        \abs{R_2}^2
        +
        2 R_2\bar{\eps}
        +
        \abs{\eps}^2\phi
    }
    +
    \bigo{
        \e^{ -\theta_0\sts{ L + \theta_0 t } }
    }
    +
    \norm{\eps}_{L^2}^2\smallo{ \norm{\eps}_{L^2} }
   \\
   \notag
   =
   &
  \frac{\omega_2\sts{0}}{2}
  \int\abs{R_2}^2
  +
  \frac{\omega_2\sts{t}}{2}
  \int\abs{\eps}^2\phi
  +
  \omega_2\sts{0}\Re
  \int R_2\bar{\eps}
  \\
  \notag
  &
    +
    \bigo{
        \e^{ -\theta_0\sts{ L + \theta_0 t } }
    }
    +
    \norm{\eps}_{L^2}^2\smallo{ \norm{\eps}_{L^2} }
    +
    \bigo{ \abs{ \omega_2\sts{t}-\omega_2\sts{0} }^2 }
   \\
   \notag
   =
   &
  \frac{\omega_2\sts{0}}{2}
  \int\abs{R_2}^2
  +
  \frac{\omega_2\sts{t}}{2}
  \int\abs{\eps}^2\phi
  +
  \omega_2\sts{t}\Re
  \int R_2\bar{\eps}
  \\
  &
    +
    \bigo{
        \e^{ -\theta_0\sts{ L + \theta_0 t } }
    }
    +
    \norm{\eps}_{L^2}^2\smallo{ \norm{\eps}_{L^2} }
    +
    \bigo{ \abs{ \omega_2\sts{t}-\omega_2\sts{0} }^2 }
    ,
\end{align}
where we used the orthogonality condition $\inprod{R_2}{\eps}=0$ in the last equality,

In the similar way, we can obtain

\textbf{
    \textit{
        Term
        $ \frac{ \omega_1\sts{0} }{2}\int\abs{u}^2\sts{1-\phi}$
    }:
}
\begin{align}
\label{t3}
\notag
  \frac{\omega_1\sts{0}}{2}\int \abs{u}^2~\sts{1-\phi}
   =
   &
  \frac{\omega_1\sts{0}}{2}
  \int\abs{R_1}^2
  +
  \frac{\omega_1\sts{t}}{2}
  \int\abs{\eps}^2~\sts{1-\phi}
  +
  \omega_1\sts{t}
  \int R_1\bar{\eps}
  \\
  &
    +
    \bigo{
        \e^{ -\theta_0\sts{ L + \theta_0 t } }
    }
    +
    \norm{\eps}_{L^2}^2\smallo{ \norm{\eps}_{L^2} }
    +
    \bigo{ \abs{ \omega_1\sts{t}-\omega_1\sts{0} }^2 }
    .
\end{align}

\textbf{
    \textit{
        Term
        $ \frac{ c_2\sts{0} }{2}\Im\int\bar{u}u_x\phi$
    }:
}
\begin{align}
\label{t4}
\notag
  \frac{ c_2\sts{0} }{2}\Im\int\bar{u}u_x~\phi
   =
   &
  \frac{ c_2\sts{0} }{2}
  \Im\int\bar{R}_2\partial_xR_2
  +
  \frac{ c_2\sts{t} }{2}
  \Im\int \bar{\eps}\eps_x~\phi
  +
  c_2\sts{t}
  \int R_2\bar{\eps}
  \\
  &
    +
    \bigo{
        \e^{ -\theta_0\sts{ L + \theta_0 t } }
    }
    +
    \norm{\eps}_{H^1}^2\smallo{ \norm{\eps}_{H^1} }
    +
    \bigo{ \abs{ c_2\sts{t}-c_2\sts{0} }^2 }
    .
\end{align}

\textbf{
    \textit{
        Term
        $ \frac{ c_1\sts{0} }{2}\Im\int\bar{u}u_x\sts{1-\phi} $
    }:
}
\begin{align}
\label{t5}
\notag
  \frac{ c_1\sts{0} }{2}\Im\int\bar{u}u_x\sts{1-\phi}
   =
   &
  \frac{ c_1\sts{0} }{2}
  \Im\int\bar{R}_1\partial_xR_1
  +
  \frac{ c_1\sts{t} }{2}
  \Im\int \bar{\eps}\eps_x~\phi
  +
  c_1\sts{t}
  \int R_1\bar{\eps}
  \\
     &+
    \bigo{
        \e^{ -\theta_0\sts{ L + \theta_0 t } }
    }
    +
    \norm{\eps}_{H^1}^2\smallo{ \norm{\eps}_{H^1} }
    +
    \bigo{ \abs{ c_1\sts{t}-c_1\sts{0} }^2 }
    .
\end{align}

\textbf{\textit{Term $N\sts{u}:=\frac{1}{2(\sigma+1)}\Im\int |u|^{2\sigma}\bar{u}u_x$}:} In order to expand it, we introduce the following cut-off functions around each solitary waves,
\begin{gather*}
  g_1\sts{t,x} :=
  \begin{cases}
    1, & \mbox{if } x<x_1 + \frac{1}{16}\sts{ L + {\theta_2} t}, \\
    0, & \mbox{if } x>x_1 + \frac{1}{8}\sts{ L + {\theta_2} t},
  \end{cases}
  \\
  g_2\sts{t,x} :=
  \begin{cases}
    1, & \mbox{if } x>x_2 - \frac{1}{16}\sts{ L + {\theta_2} t}, \\
    0, & \mbox{if } x<x_2 - \frac{1}{8}\sts{ L + {\theta_2} t},
  \end{cases}
  \\
  \tilde{g} := 1-g_1-g_2£¬
\end{gather*}
where $\theta_2$ is given by Lemma \ref{lem:mod2}.
Now, we decompose $  N\sts{u}$ as following,
\begin{align*}
  N\sts{u}=N_1\sts{u} + \tilde{N}\sts{u} + N_2\sts{u},
\end{align*}
where
\begin{gather*}
  N_1\sts{u} = \frac{1}{2(\sigma+1)}\Im\int |u|^{2\sigma}\bar{u}u_x~g_1,
  \\
  N_2\sts{u} = \frac{1}{2(\sigma+1)}\Im\int |u|^{2\sigma}\bar{u}u_x~g_2,
  \\
  \tilde{N}\sts{u} = N\sts{u}-N_1\sts{u}-N_2\sts{u}.
\end{gather*}
Note that
$\abs{ R_2\sts{t,x} } <C_{\mathrm{abs}}\e^{-4\theta_0\abs{x-x_2(t)}},$ we have
\begin{align}
\label{27}
   \int \abs{R_2(t,x)~g_1(t,x) } + \int \abs{R_2(t,x)~ \sts{1-g_2(t,x)} } =\bigo{ \e^{ -\theta_0\sts{ L+\theta_0 t } } },
\end{align}
which implies that
\begin{align}
  N_1\sts{u}
  =
  &
  \frac{1}{2(\sigma+1)}\Im\int \abs{ R_1 + R_1 + \eps }^{2\sigma}
  \sts{ \bar{R}_1 + \bar{R}_2 + \bar{\eps}}
  \sts{ \partial_x{R}_1 + \partial_x{R}_2 + \eps_x }g_1 \notag
  \\
  =
  &
  N_1\sts{ R_1 }
  +
  \act{N_1'\sts{R_1}}{ R_2+\eps }
  +
  \frac{1}{2}\biact{N_1''\sts{R_1} }{\sts{R_2+\eps}}{R_2+\eps} \notag
  \\
  &
  +
  \bigo{ \int \abs{R_2}^3g_1   } + \norm{\eps}_{H^1}^2\smallo{ \norm{\eps}_{H^1} }, \label{Nu:1}
\end{align}
where
\begin{align*}
  \act{N_1'\sts{ R_1 }}{ R_2+\eps }
  =
  &
  \frac{1}{2(\sigma+1)}
  \Im\int \sigma \abs{R_1}^{2\sigma-2}\bar{R}_1^2\partial_xR_1\sts{ R_2+\eps }g_1
  \\
  &
  +
  \frac{1}{2(\sigma+1)}
  \Im\int \sts{\sigma+1} \abs{R_1}^{2\sigma}\partial_xR_1\sts{ \bar{R}_2+\bar{\eps} }g_1
  \\
  &
  +
  \frac{1}{2(\sigma+1)}
  \Im\int \abs{R_1}^{2\sigma}\bar{R}_1\sts{ \partial_xR_2+\eps_x }g_1,
\end{align*}
and
\begin{align*}
  \biact{ N_1''\sts{ R_1 } }{ \sts{ R_2+\eps } }{ R_2+\eps  }
  =
  &
  \frac{1}{2(\sigma+1)}
  \Im\int2\sigma \abs{R_1}^{2\sigma-2}\bar{R}_1^2\sts{ R_2+\eps }\sts{ \partial_xR_2 +\eps_x }g_1
  \\
  &
  +
  \frac{1}{2(\sigma+1)}
  \Im\int2\sts{\sigma+1} \abs{R_1}^{2\sigma}\sts{ \bar{R}_2+\bar{\eps} }\sts{ \partial_xR_2 +\eps_x }g_1
  \\
  &
  +
  \frac{1}{2(\sigma+1)}
  \Im\int 2\sigma\sts{\sigma+1}\abs{R_1}^{2\sigma-2}\bar{R}_1\partial_xR_1\abs{ R_2+\eps }^2 g_1
  \\
  &
  +
  \frac{1}{2(\sigma+1)}
  \Im\int \sigma\sts{\sigma-1}\abs{R_1}^{2\sigma-4}\bar{R}_1^3\partial_xR_1\sts{ R_2 +\eps }^2 g_1
  \\
  &
  +
  \frac{1}{2(\sigma+1)}
  \Im\int \sigma\sts{\sigma+1}\abs{R_1}^{2\sigma-2}R_1\partial_xR_1\sts{ \bar{R}_2 +\bar{\eps} }^2 g_1.
\end{align*}
Therefore, the decay estimates \eqref{27} implies that
\begin{gather}
\label{32}
  \act{N_1'\sts{ R_1 }}{ R_2+\eps }
  =
  \act{N_1'\sts{ R_1 }}{ \eps }
  +
  \bigo{ \e^{ -\theta_0\sts{ L+\theta_0 t } } }
  \\
  \label{33}
  \biact{ N_1''\sts{ R_1 } }{ \sts{ R_2+\eps } }{ R_2+\eps  }
  =
  \biact{ N_1''\sts{ R_1 } }{ \eps }{ \eps  }
  +
  \bigo{ \e^{ -\theta_0\sts{ L+\theta_0 t } } }
  .
\end{gather}
Now, note that
$\abs{ R_1\sts{t,x} } <C_{\mathrm{abs}}\e^{-2\theta_2\abs{x-x_1(t)}},$ we have
\begin{align}
\label{28}
   \int \abs{R_1~ g_2 } + \int \abs{R_1~ \sts{1-g_1} } =\bigo{ \e^{ -\theta_0\sts{ L+\theta_0 t } } },
\end{align}
which yields that
\begin{gather}
\label{29}
  \abs{ N\sts{ R_1 } - N_1\sts{ R_1 } }
  =
  \bigo{ \e^{ -\theta_0\sts{ L+\theta_0 t } } },
  \\
  \label{30}
  \abs{\act{N_1'\sts{ R_1 }}{ \eps }-\act{N'\sts{ R_1 }}{ \eps }}
  =
  \bigo{ \e^{ -\theta_0\sts{ L+\theta_0 t } } },
  \\
  \label{31}
  \abs{ \biact{ N_1''\sts{ R_1 } }{ \eps }{ \eps  } - \biact{ N''\sts{ R_1 } }{ \eps }{ \eps  }}
  =
  \bigo{ \e^{ -\theta_0\sts{ L+\theta_0 t } } }.
\end{gather}
Inserting \eqref{29}-\eqref{31} and \eqref{32}-\eqref{33} into \eqref{Nu:1}, we obtain that
\begin{align}
\label{36}
  N_1\sts{u}
  =
  N\sts{R_1} + \act{N'\sts{ R_1 }}{ \eps } + \frac{1}{2}\biact{ N''\sts{ R_1 } }{ \eps }{ \eps  }
  +
  \bigo{ \e^{ -\theta_0\sts{ L+\theta_0 t } } }.
\end{align}
In the similar way, we have
\begin{align}
\label{37}
  N_2\sts{u}
  =
  N\sts{R_2} + \act{N'\sts{ R_2 }}{ \eps } + \frac{1}{2}\biact{ N''\sts{ R_2 } }{ \eps }{ \eps  }
  +
  \bigo{ \e^{ -\theta_0\sts{ L+\theta_0 t } } }.
\end{align}
As for the term $\tilde{N}\sts{u}$, we have
\begin{align*}
  \tilde{N}\sts{u}
  =
  &
  \frac{1}{2(\sigma+1)}\Im\int |u|^{2\sigma}\bar{u}u_x~\sts{1-g_1-g_2} \notag
  \\
  \leq
  &
  C_{\mathrm{abs}}\norm{u}_{L^\infty}^{2\sigma}\sts{ \int \abs{u_x}^2~\sts{1-g_1-g_2}~\int \abs{u}^2~\sts{1-g_1-g_2} }^{\frac{1}{2}}.
\end{align*}
which together with the smallness of $\norm{\eps}_{H^1}$, \eqref{27} and \eqref{28} implies that
\begin{align}
\label{35}
  \tilde{N}\sts{u}
   =
   \bigo{ \e^{ -\theta_0\sts{ L+\theta_0 t } } }
   +\norm{\eps}_{H^1}^2\smallo{ \norm{\eps}_{H^1} }.
\end{align}
Combining \eqref{36} with \eqref{37} and \eqref{35}, we have
\begin{align}\label{t6}
\notag
  N\sts{u}
  =
  &
  \sum_{k=1}^{2}
  N\sts{R_k}
  +
  \sum_{k=1}^{2}\act{N'\sts{ R_k }}{ \eps }
  +
  \sum_{k=1}^{2}\frac{1}{2}\biact{ N''\sts{ R_k } }{ \eps }{ \eps  }
  \\
  &
  +
  \bigo{ \e^{ -\theta_0\sts{ L+\theta_0 t } } }
   +\norm{\eps}_{H^1}^2\smallo{ \norm{\eps}_{H^1} }.
\end{align}
Summing up \eqref{t1}, \eqref{t2}, \eqref{t3}, \eqref{t4}, \eqref{t5} and \eqref{t6}, we finish the proof.
\end{proof}

%
%


\begin{thebibliography}{10}

\bibitem{AmbMal:book}A. Ambrosetti and A. Malchiodi,
\newblock{Nonlinear analysis and semilinear elliptic
problems},
\newblock \emph{Cambridge Studies in Advanced Mathematics}, \textbf{104},
Cambridge University Press,  2007.

\bibitem{CherSS:gDNLS:localStruct}
Y. Cher, G. Simpson and C. Sulem,
\newblock{Local Structure of singular profiles for a derivative nonlinear Schr\"{o}dinger equation}.
\newblock\emph{arXiv}:1602.02381.

\bibitem{ColinOhta-DNLS}
M. Colin, M. Ohta,
\newblock{Stability of solitary waves for derivative nonlinear
  {S}chr{\"o}dinger equation}.
\newblock\emph{Ann. Inst. H. Poincar{\'e} Anal. Non Lin{\'e}aire},
 \textbf{ 23:5}(2006), 753--764.

\bibitem{Fukaya:gDNLS:bdline}
N. Fukaya,
\newblock{Instability of solitary waves for a generalized derivative nonlinear Schr\"{o}dinger equation in a borderline case}.
\newblock\emph{arXiv}:1604.07945.

\bibitem{FuHaIn:gDNLS:GWP}
N. Fukaya, M. Hayashi, and T. Inui,
\newblock{Global well-posedness on a generalized derivative nonlinear
  Schr{\"o}dinger equation}.
\newblock\emph{arXiv}:1610.00267.

\bibitem{GrillSS:Stable:87}
M. Grillakis, J. Shatah, and W. Strauss,
\newblock{Stability theory of solitary waves in
  the presence of symmetry, {I}}.
\newblock\emph{J. Funct. Anal.}, \textbf{74:1}(1987), 160--197.


\bibitem{GSS:NLS:Stab2}
M. Grillakis, J. Shatah, and W. Strauss,
\newblock{Stability theory of solitary waves in the presence of symmetry, II}.
\newblock\emph{ J. Funct. Anal.}, \textbf{94}(1990), 308--348.


\bibitem{HaOz:gDNLS}
M. Hayashi and T. Ozawa.
\newblock{Well-posedness for a generalized derivative nonlinear Schr\"odinger
  equation}.
\newblock\emph{ J. Differ. Equat.}, \textbf{261:10}(2016), 5424--5445.


\bibitem{IbrMN:NLKG:Scat}
S. Ibrahim, N. Masmoudi and K. Nakanishi,
\newblock{Scattering threshold for the focusing nonlinear Klein-Gordon
equation},
\newblock{Analysis $\&$ PDE.}, \textbf{4:3}(2011), 405--460.

\bibitem{KaupN:DNLS:soliton}
D. J. Kaup and A. C. Newell,
\newblock{An exact solution for a derivative
nonlinear {S}chr{\"o}dinger equation}.
\newblock \emph{J. Math. Phys.}, \textbf{19:4}(1978), 798--801.


\bibitem{LeWu:DNLS}
S. Le Coz and Y. Wu,
\newblock{Stability of multi-solitons for the derivative nonlinear
  Schr{\"o}dinger equation}.
\newblock\emph{arXiv}:1609.04589.

\bibitem{LiuSS:gDNLS:Stab}
X. Liu, G. Simpson, and C. Sulem,
\newblock{Stability of solitary waves for a generalized derivative nonlinear
  Schr{\"o}dinger equation}.
\newblock\emph{ J. Nonlinear Sci.}, \textbf{23:4}(2013), 557--583,.

\bibitem{LiuPS:DNLS:ISM}
J. Liu, P. Perry and C. Sulem,
\newblock{Global existence for the derivative nonlinear
  Schr\"{o}dinger equation by the method of inverse scattering}.
\newblock\emph{arXiv}:1511.01173.

\bibitem{MartelM:Instab:gKdV}
 Y. Martel and F. Merle,
 \newblock{Instability of solitons for the critical generalized
 Korteweg-de Vries equation}.
\newblock\emph{Geom. Funct. Anal.}, \textbf{11:1}(2001), 74--123.

\bibitem{MartelMT:Stab:gKdV}
Y. Martel, F. Merle and T. P. Tsai,
\newblock{Stability and asymptotic
stability for subcritical g{KdV} equations}.
\newblock \emph{Comm. Math. Phys.}, \textbf{231:2}(2002), 347--373.

\bibitem{MartelMT:Stab:NLS}
Y. Martel, F. Merle and T. P. Tsai,
\newblock{Stability in {$H^1$} of the sum
of {$K$} solitary waves for some nonlinear Schr{\"o}dinger equations}.
\newblock \emph{Duke Math. J.}, \textbf{133:3}(2006), 405--466.

\bibitem{MiaoTX:DNLS:Exist}
C. Miao, X. Tang and G. Xu,
\newblock{Solitary waves for nonlinear {S}chr\"{o}dinger
  equation with derivative}. Submitted.


\bibitem{MiaoTX:DNLS:Stab}
C. Miao, X. Tang and G. Xu,
\newblock {Stability of the traveling waves for the derivative Schr\"{o}dinger
equation in the energy space}. To appear in Calculus of Variations and Partial Differential Equations.

\bibitem{M-PHY}
E. Mj{\o}lhus,
\newblock{On the modulational instability of hydromagnetic waves parallel
  to the magnetic field}.
\newblock \emph{J. Plasma Phys.}, \textbf{16}(1976), 321--334.

\bibitem{NakChen:DNLS:Mulsol}
{A. Nakamura  and H.-H. Chen}
\newblock {Multi-soliton solutions of a derivative nonlinear Schr\"{o}dinger equation}.
\newblock \emph{ J. Phys. Soc. Japan}, \textbf{49:2}(1980), 813--816.


\bibitem{NakanishiSchlag:Book:invariant manifold}
K. Nakanishi and W. Schlag,
\newblock{Invariant Manifolds and Dispersive Hamiltonian Evolution Equations},
\newblock \emph{Zurich Lectures in Advanced Mathematics}, Europ. Math. Soc., 2011.

\bibitem{PaySatt:Instab}
L. E. Payne and  D. H. Sattinger,
\newblock{Saddle points and instability of nonlinear hyperbolic equations}.
 \newblock\emph{Israel J. Math.}, \textbf{22:3-4}(1975), 273--561.

\bibitem{PassS-PHY}
T. Passot and P. L. Sulem,
\newblock{Multidimensional modulationof Alfv\'{e}n waves}.
\newblock\emph{ Phys. Rev. E.}, \textbf{48}(1993), 2966--2974.


\bibitem{ReedSimon:book:IV}
M. Reed and B. Simon,
\newblock{ Methods of Modern Mathematical Physics:
Analysis of
  Operators}. Vol.~IV., 1978.
\newblock {Academic Press}.

\bibitem{San:gDNLS:LWP}
G. N. Santos,
\newblock{Existence and uniqueness of solution for a generalized nonlinear derivative Schr\"{o}dinger equation}.
\newblock\emph{ J. Diff. Equat.}, \textbf{259:5}(2015), 2030--2060.

\bibitem{SuSu-book}
C. Sulem and P. L. Sulem,
\newblock{The Nonlinear Schr{\"o}dinger Equation: Self-Focusing and
  Wave Collapse}.
\newblock \emph{Applied Mathematical Sciences}, Vol. \textbf{139},  Springer New York, 2007.

\bibitem{Takaoka:DNLS:LWP}
H. Takaoka,
\newblock{Well-posedness for the one-dimensional nonlinear
Schr{\"o}dinger equation with the derivative nonlinearity}.
\newblock \emph{Adv. Diff. Equat.}, \textbf{4:4}(1999), 561--580.

\bibitem{Wein:stab:SJMA}
M. I. Weinstein,
\newblock{Modulational stability of ground states of nonlinear
  {Schr\"odinger} equations}.
\newblock \emph{SIAM J. Math. Anal.}, \textbf{16:3}(1985), 472--491.

\bibitem{Wein:stab:CPAM}
 M. I. Weinstein,
 \newblock{Lyapunov stability of ground states of nonlinear dispersive
  evolution equations}.
\newblock \emph{Comm. Pure Appl. Math.}, \textbf{39:1}(1986), 51--67.


\end{thebibliography}
\end{document}